\documentclass[12pt]{amsart}
\usepackage{amsmath}
\usepackage{amssymb}
\usepackage{mathtools,nccmath,textcomp}
\usepackage{tikz}
\usepackage[all]{xy}
\usepackage{longtable}
\usepackage{url}
\usepackage{textcomp}
\usepackage{booktabs}
\usepackage{multirow}
\usepackage{array}
\allowdisplaybreaks[1]
\makeatletter
\@namedef{subjclassname@2020}{%
  \textup{2020} Mathematics Subject Classification}
\makeatother

\newtheorem{theorem}{Theorem}
\newtheorem{lemma}[theorem]{Lemma}
\newtheorem{proposition}[theorem]{Proposition}
\newtheorem{corollary}[theorem]{Corollary}
\newtheorem{conjecture}[theorem]{Conjecture}

\theoremstyle{definition}

\newtheorem{example}[theorem]{Example}

\theoremstyle{remark}
\newtheorem{remark}[theorem]{Remark}
\newtheorem{problem}[theorem]{Problem}
\numberwithin{equation}{section}

\DeclareMathAlphabet{\matheur}{U}{eur}{m}{n}

\newcommand{\FF}{\mathbb{F}}
\newcommand{\ZZ}{\mathbb{Z}}
\newcommand{\QQ}{\mathbb{Q}}

\newcommand{\PP}{\mathbb{P}}
\newcommand{\NN}{\mathbb{N}}

\newcommand{\rad}{\mathrm{rad}}
\newcommand{\Et}{E^{\mathrm{tor}}(\QQ)}

\DeclareMathOperator{\Aut}{Aut}

\DeclareMathOperator{\GL}{GL}

\DeclareMathOperator{\Gal}{Gal}

\newcommand{\Qbar}{\overline{\QQ}}

\newmuskip\pFqskip
\mathchardef\pFcomma=\mathcode`, % keep a copy of the comma

\begin{document}
%~
\title[Arithmetic exceptionality of Latt\`{e}s maps]{Arithmetic exceptionality of Latt\`{e}s maps}

\author{Chatchawan Panraksa}
\address{Applied Mathematics Program, Mahidol University International College, Nakhon Pathom, Thailand 73170} \email{chatchawan.pan@mahidol.edu}

\author{Detchat Samart}
\address{Department of Mathematics, Faculty of Science, Burapha University, Chonburi, Thailand 20131} \email{petesamart@gmail.com}

\author{Songpon Sriwongsa}
\address{Department of Mathematics, Faculty of Science, King Mongkut's University of Technology Thonburi, Bangkok, Thailand 10140} \email{songpon.sri@kmutt.ac.th}

%    \thanks will become a 1st page footnote.
%\thanks{}

\subjclass[2020]{11T06, 11G05, 11G15}
\keywords{Arithmetic exceptionality, Latt\`{e}s map, Elliptic curve, Complex multiplication}
\date{\today}

\maketitle

\begin{abstract}
Let $\FF_q$ denote a finite field of order $q$. A rational function $r(x)\in \QQ(x)$ is said to be \textit{arithmetically exceptional} if it induces a permutation on $\PP^1(\FF_p)$ for infinitely many primes $p$. Based on some computational results, Odaba\c{s} conjectured that for each $k\in \NN$, the $k$-th Latt\`{e}s map attached to an elliptic curve $E/\QQ$ is arithmetically exceptional if and only if $E$ has no $k$-torsion point whose $x$-coordinate is rational. In this paper, we prove that this conjecture is true for any elliptic curve $E/\QQ$ having complex multiplication by an imaginary quadratic field other than $\QQ(\sqrt{-11}).$ On the other hand, we show that the conjecture becomes invalid if $E$ has CM by $\QQ(\sqrt{-11})$ and $6\mid k$. Partial results for non-CM elliptic curves are also given.
\end{abstract}

\section{Introduction} \label{S:intro}

The study of polynomials and rational functions that induce permutations on finite fields is a classical topic at the intersection of number theory and arithmetic geometry. A polynomial $f \in \mathbb{Z}[x]$ is called a \textit{Schur polynomial} (or is said to be exceptional) if it induces a bijection on the residue field $\mathbb{Z}/p\mathbb{Z}$ for infinitely many primes $p$. This direction of research was initiated by Schur \cite{Schur1923Uber} in 1923. His work led to a conjecture, now known as \textit{Schur's conjecture}, asserting that any such polynomial must be a composition of linear polynomials and Dickson polynomials. Schur's conjecture was first proven by Fried \cite{Fried1970}, but his proof contains several gaps and errors. Later, Turnwald \cite{Turnwald1995} gave a corrected proof together with some historical remarks and M\"{u}ller \cite{Muller1997} gave a simplified proof, which relies on group-theoretic arguments.

The problem naturally extends to rational functions. A rational function $r(x) \in \mathbb{Q}(x)$ is said to be \textit{arithmetically exceptional} if it induces a permutation on the projective line $\mathbb{P}^1(\mathbb{F}_p)$ for infinitely many primes $p$. The classification of such functions is significantly more complex than the polynomial case. Guralnick, M\"uller, and Saxl \cite{GMS2003} provided a comprehensive analysis of exceptional rational functions, highlighting their connection to monodromy groups and arithmetic geometry.

A rich source of rational functions with interesting dynamical and arithmetic properties are the \textit{Latt\`es maps}. Originating from the arithmetic of elliptic curves, given $k\in \NN$ and an elliptic curve $E/K$, the Latt\`es map $L_k$ attached to $E$ is the unique rational function satisfying $\mathrm{proj} \circ [k] = L_k \circ \mathrm{proj}$, where $[k]$ is the multiplication-by-$k$ endomorphism on $E$, and $\mathrm{proj}: E \to \mathbb{P}^1(K)$ is the $x$-coordinate projection map.  In the context of complex dynamics, Milnor \cite{Milnor2006} provided a foundational description of these maps, characterizing them as quotients of affine maps on tori. 

From an arithmetic perspective, Latt\`es maps have been the subject of recent intense study. Küçüksakall\i \ \cite{Kucuksakalli2014} investigated the value sets of Latt\`es maps over finite fields, while Bell et al. \cite{BCCHLTZ22} analyzed the density of periodic points for these maps. Furthermore, the construction of permutation rational functions via isogenies of elliptic curves has been explored by Bisson and Tibouchi \cite{BT2018}, suggesting a deep link between the endomorphism structure of the underlying curve and the permutation properties of the induced rational map.

Recently, based on computational evidence, Odaba{\c{s}} \cite[Ch.~5]{Odabas2023} formulated a precise conjecture connecting the exceptionality of $L_k$ to the torsion structure of the elliptic curve over $\mathbb{Q}$. Specifically, the conjecture can be stated as follows.
\begin{conjecture}[Odaba{\c{s}}]\label{C:Obadas}
For any positive integer $k$, the $k$-th Latt\`es map attached to an elliptic curve $E/\mathbb{Q}$ is arithmetically exceptional if and only if $E$ possesses no $k$-torsion point whose $x$-coordinate is rational.
\end{conjecture}

In this paper, we address this conjecture mainly for the class of elliptic curves with complex multiplication (CM). By leveraging the theory of Frobenius distributions for CM elliptic curves and explicit reciprocity laws, we prove the following:

\begin{theorem}\label{T:main}
Let $K$ be an imaginary quadratic field with class number 1, excluding $K = \mathbb{Q}(\sqrt{-11})$, and let $E/\mathbb{Q}$ be an elliptic curve with CM by an order $\mathcal{O}$ in $K$. Then Conjecture~\ref{C:Obadas} holds for $E$.
\end{theorem}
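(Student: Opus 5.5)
We reduce exceptionality of $L_k$ to a statement about Frobenius acting on $E[k]$, and then use the explicit description of Frobenius for CM curves to decide when that statement holds for infinitely many primes.

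\textbf{Step 1 (reduction to a Frobenius criterion).} Let $p$ be a prime of good reduction with $p\nmid 6k$. Since $x\colon E(\FF_p)\to\PP^1(\FF_p)$ is surjective onto the image set, and $L_k\circ x=x\circ[k]$, we compare $L_k$ with $[k]$ on $\tilde E$. We will show that $L_k$ permutes $\PP^1(\FF_p)$ if and only if no point $P\in\tilde E(\overline{\FF}_p)$ with $P\neq O$ and $kP=O$ has $x(P)\in\FF_p$. The argument uses that $L_k$ has degree $k^2$ and that the fibre over $\infty$ is $x(E[k])$.

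\begin{itemize}
\item If some nonzero $P\in E[k]$ has $x(P)\in\FF_p$, then $L_k(x(P))=\infty=L_k(\infty)$, so $L_k$ is not injective on $\PP^1(\FF_p)$.
\item Conversely, suppose $L_k(x(P))=L_k(x(Q))$ with $x(P)\neq x(Q)$ both in $\FF_p$. Then $kP=\pm kQ$, so $R=P\mp Q$ is a nonzero $k$-torsion point. The points $P,Q$ lie in $E(\FF_p)$ or in its quadratic twist, and $R$ is defined over $\FF_{p^2}$ with $\sigma R=\pm R$. Hence $x(R)\in\FF_p$. The mixed twist/non-twist case needs care, and we will handle it by working on $E(\FF_{p^2})$.
\end{itemize}

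Thus $L_k$ is exceptional if and only if, for infinitely many $p$, the Frobenius $\pi_p$ has no eigenvector in $E[k]$ with eigenvalue $\pm1$. By Chebotarev applied to the field $\QQ(x(E[k]))$, this says exactly that some element $\sigma\in\Gal(\QQ(E[k])/\QQ)\subset \GL_2(\ZZ/k)$ fixes no $\pm$-class of a nonzero vector.

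\textbf{Step 2 (easy direction).} If $E$ has a $k$-torsion point $P\neq O$ with $x(P)\in\QQ$, then $x(P)$ reduces into $\FF_p$ for almost all $p$. By Step 1, $L_k$ is then non-exceptional.

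\textbf{Step 3 (hard direction, using CM).} Assume no nonzero $k$-torsion point has rational $x$-coordinate; we must produce infinitely many good $p$. Take $p$ inert in $K$. Then $\pi_p$ satisfies $\pi_p^2=-p$, which holds for $j\neq0,1728$; the cases $j=0,1728$ are treated separately because of extra twists. It suffices to find inert $p$ such that $\pi_p\mp1$ is invertible on every $\ell$-part of $E[k]$, i.e.\ $\ell\nmid(\pi_p-1)(\pi_p+1)$ inside $\mathcal O/\ell$ for relevant $\ell$. Since $N(\pi_p\pm1)=p+1$, choosing inert $p$ with $p+1\not\equiv0\pmod\ell$ for all $\ell\mid k$ makes $L_k$ a permutation. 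By Dirichlet, such primes exist unless the congruence conditions clash. A clash means: for some $\ell\mid k$, every inert $p$ satisfies $p\equiv-1\pmod\ell$, or $p$ is forced in a bad class. This happens only when $\ell$ is small ($\ell=2,3$) and relates to $\ell$ dividing the discriminant.

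For these residual primes, and for split $p$, we use explicit reciprocity for $\pi_p$ (for example the Hecke character of $E$ normalized mod $4$ or mod $\sqrt{-3}$). With it we compute $\pi_p$ modulo $2$, $3$, or $4$, and match the bad cases with the existence of rational $x$-coordinates of $2$- and $3$-torsion points. These are the roots of the $2$-division cubic and of the $3$-division quartic. Finally, for composite $k$ we patch the $\ell$-power conditions together using CRT on $p$.

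\textbf{Main obstacle.} The main obstacle is this case analysis at $\ell=2,3$, with the curves running over the thirteen CM orders and their twists. There one must show that if no $x$-coordinate is rational, then some Frobenius class avoids all $\pm1$-eigenvectors simultaneously for $2^a$ and $3^b$. It is exactly here that $K=\QQ(\sqrt{-11})$ should fail for $6\mid k$: every inert $p$ has $\pi_p$ killing a $2$- or $3$-torsion $\pm$-class, although no single $x$-coordinate is rational. I would first tabulate $\Gal(\QQ(E[6])/\QQ)$ for each CM $j$-invariant and verify the criterion directly.
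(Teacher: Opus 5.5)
Your Steps 1 and 2 are fine. Step 1 is the criterion $(A_p(E),k)=1$ of Proposition~\ref{P:perm} recast via Chebotarev. Your inert-prime argument is what the paper does for $D\in\{-4,-8,-16,-7,-28,-12\}$ and for the subcases $(k,6)=1$ of $D\in\{-3,-27\}$.

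Two small corrections:
\begin{itemize}
\item For inert $p\ge 5$ one has $a_p=0$ for every $j$, so $j=0,1728$ need no separate treatment there.
\item The clash at $\ell=2$ has nothing to do with $2\mid D$. Since $p+1$ is even for every odd $p$, inert primes are useless for \emph{every} even $k$.
\end{itemize}

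\textbf{The gap.} The genuine gap is the even-$k$ case, which forces split primes. Above all this means $E_d\colon y^2=x^3+d$ with $(k,6)=2$, where the real content of the theorem lies.
\begin{itemize}
\item For split $p=\pi\bar\pi$, the Frobenius is $u\pi$, where $u$ is a sixth root of unity depending on $\pi$ through $\left(\frac{4d}{\pi}\right)_6$.
\item Your step ``patch the $\ell$-power conditions together using CRT on $p$'' is therefore not an argument. The conditions are conditions on $\pi$ in Kummer extensions and ray class fields of $K$, and these are entangled. For example, $\left(\frac{2}{\pi}\right)_3\equiv\pi\pmod 2$ by cubic reciprocity. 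For $\ell\mid d$, the symbol $\left(\frac{\ell}{\pi}\right)_3$ is governed by $\pi$ modulo $\ell$, and $\ell$ may also divide $k$.
\item Your own example $D=-11$, $k=6$ shows that solvability at each $\ell$ separately does not imply simultaneous solvability. A field-specific compatibility argument is indispensable, and the proposal has none.
\item Tabulating $\Gal(\QQ(E[6])/\QQ)$ per $j$-invariant cannot supply it. The $j=0$ curves form infinitely many twists, with $\QQ(E_d[2])=\QQ(\sqrt{-3},\sqrt[3]{d})$ varying. Moreover, what matters is $E[\rad(k)]$, not $E[6]$.
\end{itemize}

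You have also located the difficulty in the wrong place. For the fields in the theorem, $2$ and $3$ never interact nontrivially:
\begin{itemize}
\item for $K=\QQ(\sqrt{-3})$, the rational $x$-coordinate of a $3$-torsion point forces $3\nmid k$;
\item for $D\in\{-19,-43,-67,-163\}$, both $2$ and $3$ are inert and $\mathcal{O}_K^\times=\{\pm1\}$.
\end{itemize}
The real difficulty is $2$ together with the odd primes dividing $k$.

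\textbf{The missing idea.} What is missing is the paper's decoupling device, which proceeds as follows.
\begin{itemize}
\item For each odd $\ell\mid k$, choose $\pi$ not congruent to any unit modulo the primes above $\ell$ (Lemma~\ref{L:nonunit}). Then the condition at $\ell$ holds whatever $u$ is.
\item Only the condition at $2$ now involves $u$, and it amounts to $\left(\frac{d}{\pi}\right)_3\neq 1$. This holds for infinitely many $\pi$ exactly when $d$ is not a cube, i.e.\ when $E_d$ has no rational $2$-torsion.
\item Cubic and sextic reciprocity convert this into congruences on $\pi$: modulo $9$ if $3\mid d$, and modulo the primes dividing $d$.
\item One must show these congruences are compatible with the non-unit conditions when a prime divides both $d$ and $k$. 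This is Lemma~\ref{L:ab}, with $\ell=13,19$ checked by hand.
\item $\ell=7$ needs a direct computation. For $\lambda\mid 7$ every class in $(\mathcal{O}_K/\lambda)^\times$ is the image of a unit, so the non-unit trick is unavailable.
\end{itemize}

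The other split-prime cases need their own inputs:
\begin{itemize}
\item For $D=-27$, the Frobenius lies in $\ZZ+3\mathcal{O}_K$, hence equals $\pm\pi$ with $\pi\equiv1\pmod 3$. After that, Chebotarev in $K$ works, again with $7$ handled separately.
\item For $D\in\{-19,-43,-67,-163\}$ and $k$ even, one needs $\pi\not\equiv\pm1$ modulo each $\lambda_i$ and each $\bar\lambda_i$. This is a condition on $\pi$ in $\mathcal{O}_K$, not a congruence on $p$.
\end{itemize}
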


\medskip

The CM field $\QQ(\sqrt{-11})$ is excluded from Theorem~\ref{T:main}, since, as shown in Section~\ref{S:11}, the conjecture turns out to be false in this case. 
In particular, for $K=\QQ(\sqrt{-11})$, we show that the conjecture holds precisely when $6\nmid k$
(see Proposition~\ref{P:D11_6notdivk_exc}),
whereas for $6\mid k$ there is a genuine obstruction coming from the splitting of $3$ in $K$.
In fact, for any $E/\QQ$ with CM by the ring of integers $\mathcal{O}_K$ and any $k$ divisible by $6$,
the associated Latt\`es map $L_k$ fails to be arithmetically exceptional, regardless of the rationality of the $x$-coordinates of the $k$-torsion points of $E$
(see Proposition~\ref{P:D11_6divk_notexc}).
We also give explicit counterexamples, for both CM and non-CM elliptic curves, in Proposition~\ref{P:D11_counterexample_curve} and Proposition~\ref{P:counter_noncm}, which appear to be overlooked in Odaba\c{s}'s experiments.
\medskip

%We also discuss the distribution of Frobenius traces in short intervals \cite{ABM2017} where relevant to the density of primes, and we analyze why the case of discriminant $-11$ presents unique obstructions.

%Organization
We organize this paper as follows.
Section~\ref{S:aux} collects the permutation criterion for Latt\`es maps in terms of Frobenius traces,
together with CM background results and reciprocity tools used in the proofs.
In Section~\ref{S:proof} we prove Theorem~\ref{T:main} by treating the CM discriminants case-by-case.
Section~\ref{S:11} is devoted to the exceptional field $K=\QQ(\sqrt{-11})$, where we identify
the obstruction when $6\mid k$ and prove the corresponding positive result for $6\nmid k$.
In Section~\ref{S:examples}, we present explicit examples illustrating the main theorem and
counterexamples for the CM field $\QQ(\sqrt{-11})$. Finally, we discuss the conjecture in a general setting and outline directions for further investigation in Section~\ref{S:outlook}.

\section{Some auxiliary results}\label{S:aux}
The following key result can be found in \cite[Cor.~2.6]{BCCHLTZ22}. It should be noted that an equivalent result for CM elliptic curves is stated and proven in \cite[Cor.~2.8]{Kucuksakalli2014}.
\begin{proposition}\label{P:perm} Let $E/\QQ$ be an elliptic curve which has a good reduction at a prime $p$ with the Frobenius trace $a_p$. Then the $k$-th  Latt\`{e}s map attached to $E$ induces a bijection on $\PP^1(\FF_p)$ if and only if $((p+1)^2-a_p^2,k)=1$.
\end{proposition}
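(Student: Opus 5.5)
The plan is to work directly with the dynamics of $L_k$ on $\PP^1(\FF_p)$ through the reduction $\tilde E/\FF_p$, which is an elliptic curve since $p$ is a prime of good reduction. The projection $\mathrm{proj}\colon \tilde E(\FF_{p^2})\to \PP^1(\FF_{p^2})$ is two-to-one away from $2$-torsion. A point $x_0\in\PP^1(\FF_p)$ has preimages either in $\tilde E(\FF_p)$ or in the quadratic twist $\tilde E^{t}(\FF_p)$; the latter are the points $(x_0,y)$ with $y\in \sqrt{d}\,\FF_p$ for a nonsquare $d$. This gives the identification
\[
\PP^1(\FF_p)\;=\;\bigl(\tilde E(\FF_p)/\pm\bigr)\ \cup\ \bigl(\tilde E^{t}(\FF_p)/\pm\bigr),
\]
glued along the images of the $2$-torsion common to both. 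The relation $L_k\circ\mathrm{proj}=\mathrm{proj}\circ[k]$ shows that $L_k$ preserves each piece and acts as $[k]$ modulo $\pm$. We also record the group orders $\#\tilde E(\FF_p)=p+1-a_p$ and $\#\tilde E^{t}(\FF_p)=p+1+a_p$, whose product is $(p+1)^2-a_p^2$.

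For the direction ``if $\gcd=1$ then bijective'': since $\gcd(k,(p+1)^2-a_p^2)=1$, $k$ is coprime to the order of both finite groups. Hence $[k]$ is an automorphism of each group, commuting with $-1$, so it induces bijections on both quotients. These bijections agree on the overlap, because the $2$-torsion is fixed by odd $k$, and $k$ is odd here since $(p+1)^2-a_p^2$ is even whenever $p$ is odd. Gluing them shows $L_k$ is a bijection of $\PP^1(\FF_p)$. The prime $p=2$ needs a small separate check, or can simply be discarded since only infinitely many primes matter later; I would verify it directly, noting that $L_k$ is well-defined on $\PP^1$ via the reduction.

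For the converse, suppose a prime $\ell\mid k$ divides, say, $\#\tilde E(\FF_p)$. Then there is a point $P\in\tilde E(\FF_p)$ of order $\ell$, and $[k]P=O$, so $L_k(\mathrm{proj}(P))=L_k(\infty)=\infty=\mathrm{proj}(O)$. Injectivity therefore fails unless $\mathrm{proj}(P)=\infty$, which is impossible because $P\neq O$; so $L_k$ is not a bijection. The twist case is identical, using a point of order $\ell$ in $\tilde E^{t}(\FF_p)$, which projects to an $\FF_p$-rational $x$-coordinate distinct from $\infty$.

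The main obstacle is making the identification of $\PP^1(\FF_p)$ with the union of the two quotients rigorous in every case: for the points with $y=0$, in characteristic $2$ (where the twist and the $\pm$ quotient behave differently), and when checking that $L_k$ on $\PP^1$ really coincides with the reduction of the rational function, not merely with the induced map on $x$-coordinates. I would handle the last point by reducing the identity $\mathrm{proj}\circ[k]=L_k\circ\mathrm{proj}$ modulo $p$ and using that good reduction keeps the numerator and denominator of $L_k$ coprime mod $p$, so that no degree drop occurs.
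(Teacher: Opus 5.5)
Your argument is essentially correct and is the standard one. The paper gives no proof of its own; it cites \cite[Cor.~2.6]{BCCHLTZ22}, which rests on the same covering of $\PP^1(\FF_p)$ by the $x$-images of $\tilde E(\FF_p)$ and of its quadratic twist, together with $\#\tilde E(\FF_p)\cdot\#\tilde E^{t}(\FF_p)=(p+1)^2-a_p^2$. One auxiliary claim in your proposal is false, though, and should be removed.

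You assert that $(p+1)^2-a_p^2$ is even for every odd $p$, and conclude that $k$ must be odd. In fact, for odd $p$ one has $(p+1)^2-a_p^2\equiv a_p\pmod 2$. For example, for $y^2=x^3+2$ and $p=7$ one has $a_7=-1$ and $(p+1)^2-a_7^2=63$, and $L_2$ does permute $\PP^1(\FF_7)$ (Table~\ref{tab:vs_D3_7}). Nothing in your proof actually depends on this claim. The two induced bijections agree on the overlap automatically, because both are restrictions of the single map $L_k$. Moreover, $L_k$ maps $X=\mathrm{proj}(\tilde E(\FF_p))$ onto $X$ and $Y=\mathrm{proj}(\tilde E^{t}(\FF_p))$ onto $Y$, so it is surjective, hence bijective, on the finite set $X\cup Y=\PP^1(\FF_p)$. (If $k$ is even, coprimality forces both group orders to be odd, so there is no nontrivial $2$-torsion and the overlap is just $\{\infty\}$.)

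Similarly, $p=2$ needs no separate check. For $y^2+h(x)y=f(x)$, use the Artin--Schreier twist $y^2+h(x)y=f(x)+h(x)^2$. Each $x_0\in\FF_2$ with $h(x_0)\neq 0$ then has two preimages on exactly one of the two curves, while $h(x_0)=0$ gives the $x$-coordinate of a $2$-torsion point on both. So the same decomposition and the relation $\#\tilde E(\FF_2)+\#\tilde E^{t}(\FF_2)=6$ hold, and your argument goes through unchanged.
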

We will also invoke the following well-known result of Deuring on Frobenius traces of CM elliptic curves. For references, the reader can consult \cite[Thm.~7.2.15]{Cohen1993}.
\begin{proposition}\label{P:trace}
If $E/\QQ$ has CM by an imaginary quadratic order $\mathcal{O}$ of discriminant $D$ and $p$ is a prime of good reduction and not dividing $D$, then 
\[a_p=\begin{cases}
    0 & \text{if} \left(\frac{D}{p}\right)=-1,\\
    \pi +\bar{\pi} &  \text{if} \left(\frac{D}{p}\right)=1,
\end{cases}
\]
for some prime $\pi\in \mathcal{O}$ such that $N(\pi)=p$.
\end{proposition}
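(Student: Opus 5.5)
The statement to be proved is Deuring's description of $a_p$ for CM curves (Proposition~\ref{P:trace}). The plan is to reduce modulo $p$ and read off the Frobenius endomorphism inside the endomorphism ring. Write $\tilde E$ for the reduction of $E$ at $p$ and $\phi_p$ for its $p$-power Frobenius. Then $a_p = p+1-\#\tilde E(\FF_p)$ equals the trace of $\phi_p$, since $\phi_p$ satisfies $\phi_p^2 - a_p\phi_p + p = 0$. Because $p\nmid D$ and $p$ is a prime of good reduction, reduction gives an injective ring map
\[
\mathcal{O}=\mathrm{End}_{\Qbar}(E)\hookrightarrow \mathrm{End}_{\overline{\FF}_p}(\tilde E)
\]
that preserves degrees. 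I will then split into two cases according to whether $p$ splits or is inert in $K$; the case $p$ ramified is excluded because $p\nmid D$.

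In the split case, $\left(\frac{D}{p}\right)=1$, I will use Deuring's criterion: $\tilde E$ is ordinary if and only if $p$ splits in $K$. I will take this from Deuring's reduction theory rather than reprove it; one route is through the action of $\mathcal{O}$ on the invariant differential together with the Hasse invariant. For ordinary $\tilde E$, the algebra $\mathrm{End}(\tilde E)\otimes\QQ$ is an imaginary quadratic field containing $K$, and so it equals $K$. Since $E$ is defined over $\QQ$, the Frobenius $\phi_p$ commutes with the image of $\mathcal{O}$. Moreover $\mathrm{End}(\tilde E)$ is an order containing $\mathcal{O}$ with index prime to $p$, because the conductor of $\mathcal{O}$ divides $D$ and $p\nmid D$. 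This will let me show that $\phi_p$ actually lies in the image of $\mathcal{O}$: the class number one hypothesis on $K$ (in our application) and the standard argument via the conductor, which says $\phi_p$ lies in $\ZZ+f\mathcal{O}_K$ since the reduction has endomorphism ring exactly $\mathcal{O}$, handle this step. Setting $\pi=\phi_p\in\mathcal{O}$, we get $N(\pi)=\deg\phi_p=p$, so $\pi$ is a prime element of norm $p$, and $a_p=\mathrm{Tr}(\phi_p)=\pi+\bar\pi$.

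In the inert case, $\left(\frac{D}{p}\right)=-1$, Deuring's criterion says $\tilde E$ is supersingular. Then $p\mid a_p$, and the Hasse bound $|a_p|\le 2\sqrt p$ forces $a_p=0$ once $p\ge 5$. For $p=2$ or $3$, I would argue directly. Suppose $\phi_p$ commutes with the image of $\mathcal{O}$. Then $\ZZ[\phi_p]$ and $\mathcal{O}$ would generate a commutative subalgebra, so $\phi_p$ would lie in $K$ with norm $p$. This is impossible because $p$ is inert. So $\phi_p\notin K$, and yet $\phi_p$ commutes with the image of $\mathcal{O}$ because $E$ is defined over $\QQ$. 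This is a contradiction, so the inert case cannot arise in this way with $\phi_p$ commuting. Resolving it requires the finer point that $\phi_p$ anticommutes: it conjugates the CM action, $\phi_p\alpha=\bar\alpha\phi_p$. That relation holds because the CM is defined only over $K$ and Frobenius at an inert prime acts as complex conjugation on $K$. Given $\phi_p\alpha=\bar\alpha\phi_p$, take $\alpha=\sqrt{D}$. Then $\phi_p$ and $\sqrt D\,\phi_p\,\sqrt D^{-1}=-\phi_p$ have the same trace, so $\mathrm{Tr}(\phi_p)=0$.

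The main obstacle is the input from Deuring's theory: the ordinary/supersingular dichotomy, and the fact that in the ordinary case Frobenius lands in $\mathcal{O}$ itself rather than a larger order. Since the paper cites this result as a standard reference, the proof can cite Deuring/Cohen for these inputs. The remaining steps, namely trace, norm and the Hasse bound, are routine.
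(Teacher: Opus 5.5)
The paper gives no argument of its own for this statement; it simply quotes Deuring's theorem from Cohen [Thm.~7.2.15]. You instead derive the statement from Deuring's reduction theory. That route is legitimate and more informative, because it isolates exactly which inputs are needed. These are the ordinary/supersingular dichotomy, and the equality $\mathrm{End}_{\overline{\FF}_p}(\tilde E)=\mathcal{O}$ for split $p\nmid f$. The second input is precisely what puts $\pi$ in $\mathcal{O}$ rather than merely in $\mathcal{O}_K$. Everything else is the elementary algebra of $\phi_p^2-a_p\phi_p+p=0$. The price is that these two inputs are the heart of Deuring's theorem, so your proof is citation-dependent at essentially the same depth as the paper's.

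One justification is wrong and should be replaced. The sentence ``Since $E$ is defined over $\QQ$, the Frobenius $\phi_p$ commutes with the image of $\mathcal{O}$'' is false in general. The CM endomorphisms are defined over $K$, not over $\QQ$, and your own inert case shows that $\phi_p$ anticommutes with $\sqrt{D}$ there.

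\begin{itemize}
\item In the split case the conclusion is still true, for a different reason. For ordinary $\tilde E$ the ring $\mathrm{End}(\tilde E)$ is commutative; alternatively, a prime of $K$ above a split $p$ has residue field $\FF_p$, so the reduced CM endomorphisms are defined over $\FF_p$. In any case you do not need commutation once you cite $\mathrm{End}(\tilde E)=\mathcal{O}$.
\item In the inert case, drop the self-contradictory detour (``suppose it commutes \dots\ and yet it commutes \dots'') and go straight to the relation $\phi_p\tilde\alpha=\tilde{\bar\alpha}\,\phi_p$. This relation holds with the normalized identification $[\alpha]^\sigma=[\sigma\alpha]$, because Frobenius restricts to complex conjugation on $K$.
\item Conjugating $\phi_p^2-a_p\phi_p+p=0$ by $\sqrt{D}\in\mathcal{O}$ gives $\phi_p^2+a_p\phi_p+p=0$. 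Hence $2a_p\phi_p=0$, so $a_p=0$ for every inert $p$, including $p=2,3$.
\item As a result, the Hasse-bound step and the appeal to Deuring's supersingularity criterion are superfluous in the inert case. Supersingularity in fact follows, since $\mathrm{End}(\tilde E)$ is then noncommutative.
\end{itemize}

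The remark about class number one plays no role in the argument.
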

It can be seen immediately from Proposition~\ref{P:trace} that if $\left(\frac{D}{p}\right)=1$ (i.e. $p$ splits in $\QQ(\sqrt{D})$), then 
\begin{equation}\label{E:Np}
    (p+1)^2-a_p^2=N((\pi-1)(\pi+1))=|E(\FF_p)||E'(\FF_p)|,
\end{equation} 
where $E'$ is a quadratic twist of $E$ (see, e.g., \cite[Lem.~2.3]{BCCHLTZ22}). The next lemma gives us a necessary condition for the divisibility of $N((\pi-1)(\pi+1))$ by a given prime.  

\begin{lemma}\label{L:nonunit}
    Let $K$ be an imaginary quadratic field with class number $1$, $\alpha_0\in \mathcal{O}_K$, and let $\alpha$ be an associate of $\alpha_0$. Suppose $\ell$ is a rational prime and $\lambda\in \mathcal{O}_K$ is a prime above $\ell$. If $\alpha_0 \not\equiv u \pmod \lambda$ and $\alpha_0 \not\equiv u \pmod{\bar\lambda}$ for every $u\in \mathcal{O}_K^\times$, then $\ell\nmid N((\alpha-1)(\alpha+1))$.
\end{lemma}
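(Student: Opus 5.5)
We argue by contrapositive: if $\ell \mid N((\alpha-1)(\alpha+1))$, we produce a unit $u$ with $\alpha_0\equiv u$ modulo $\lambda$ or modulo $\bar\lambda$. Since $N$ is multiplicative on $\mathcal{O}_K$, $N((\alpha-1)(\alpha+1)) = N(\alpha-1)\,N(\alpha+1)$. Because $\ell$ is prime, $\ell$ divides one of the factors, say $N(\alpha-\varepsilon)$ with $\varepsilon\in\{1,-1\}$.

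Next we pass from norms to prime ideals. Write $N(\alpha-\varepsilon)=(\alpha-\varepsilon)\overline{(\alpha-\varepsilon)}$. The ideal $(\lambda)$ is prime in $\mathcal{O}_K$ and contains $\ell$, hence divides the product $(\alpha-\varepsilon)(\bar\alpha-\varepsilon)$. So either $\lambda\mid \alpha-\varepsilon$ or $\lambda\mid\bar\alpha-\varepsilon$. In the second case, apply complex conjugation, which is an automorphism of $\mathcal{O}_K$ fixing $\pm1$, to get $\bar\lambda\mid \alpha-\varepsilon$. In every case $\alpha\equiv\varepsilon \pmod{\mu}$ for some $\mu\in\{\lambda,\bar\lambda\}$. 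This covers $\ell$ split, inert and ramified uniformly; in the inert and ramified cases $\bar\lambda$ is an associate of $\lambda$, which does no harm.

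Finally, we return to $\alpha_0$. Since $\alpha$ is an associate of $\alpha_0$, write $\alpha = v\alpha_0$ with $v\in\mathcal{O}_K^\times$. Then $v\alpha_0\equiv \varepsilon\pmod{\mu}$, so $\alpha_0\equiv v^{-1}\varepsilon \pmod{\mu}$, where $u:=v^{-1}\varepsilon$ is a unit. This contradicts the hypothesis for $\mu=\lambda$ or $\mu=\bar\lambda$, proving $\ell\nmid N((\alpha-1)(\alpha+1))$.

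There is no serious obstacle. The only points needing care are, first, that divisibility of the norm by $\ell$ forces divisibility of $\alpha-\varepsilon$ or of its conjugate by $\lambda$ (primality of $\lambda$). Second, the conjugate case is converted into a congruence modulo $\bar\lambda$, which is exactly why the hypothesis includes both $\lambda$ and $\bar\lambda$. Class number one is used only so that the prime ideal above $\ell$ is principal, generated by $\lambda$.
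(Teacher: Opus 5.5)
Your proof is correct and follows the paper's argument essentially step for step. You use multiplicativity of the norm to get $\ell \mid (\alpha-\varepsilon)(\bar\alpha-\varepsilon)$, primality of $\lambda$ to get $\alpha\equiv\varepsilon$ modulo $\lambda$ or $\bar\lambda$, and then you absorb the unit relating $\alpha$ and $\alpha_0$; you simply spell out the conjugation step and the passage from $\alpha$ to $\alpha_0$, which the paper leaves implicit.
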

\begin{proof}
    Assume that $\ell \mid N((\alpha-1)(\alpha+1))$. Then, by multiplicativity of the norm, either $\ell\mid (\alpha-1)(\bar\alpha-1)$ or  $\ell\mid (\alpha+1)(\bar\alpha+1)$. Since $\lambda$ is a prime above $\ell$ and $\mathcal{O}_K$ is a unique factorization domain, we have that either $\alpha \equiv \pm 1 \pmod \lambda$ or  $\alpha \equiv \pm 1 \pmod {\bar\lambda}$. Hence $\alpha_0 \equiv u \pmod \lambda$ or $\alpha_0 \equiv u \pmod {\bar\lambda}$ for some $u\in \mathcal{O}_K^\times.$
\end{proof}

For an integer $n\ge 2$, a number field $L$ containing a primitive $n$-th root of unity $\zeta_n$, a prime ideal $\mathfrak{p}\subset \mathcal{O}_L$ coprime to $n$, and $\alpha\in \mathcal{O}_L$, we define the $n$-th power residue symbol $\left(\frac{\alpha}{\mathfrak{p}}\right)_n$ as 

\[\left(\frac{\alpha}{\mathfrak{p}}\right)_n=\begin{cases}
    0 & \text{ if } \alpha \in \mathfrak{p},\\
    \zeta_n^s & \text{ if } \alpha \notin \mathfrak{p}, 
\end{cases} \]
where $\zeta_n^s$ is the unique $n$-th root of unity such that $\alpha^{\frac{N \mathfrak{p}-1}{n}} \equiv \zeta_n^s  \pmod {\mathfrak{p}}$. It is easily seen from the definition that the symbol $ \left(\frac{\cdot}{\mathfrak{p}}\right)_n$ is multiplicative and, for any positive divisor $d$ of $n$,
\begin{equation}\label{E:reduce}
    \left(\frac{\alpha}{\mathfrak{p}}\right)_n^d= \left(\frac{\alpha}{\mathfrak{p}}\right)_{n/d}.
\end{equation}
If $L$ has class number $1$, then $\mathfrak{p}=(\pi)$ for some prime $\pi\in \mathcal{O}_L$, and, as an abuse of notation, we will replace $\mathfrak{p}$ with $\pi$ in the above definition. Let $\omega=e^{2\pi i/3}=(-1+\sqrt{-3})/2$. Following a notion in \cite[\S 9.3]{IR1990}, we say that a prime $\pi \in \ZZ[\omega]$ is \textit{primary} if $\pi \equiv 2 \pmod 3$. The following classical results on reciprocity laws can be found in \cite[\S 9.3]{IR1990} or \cite[\S 7]{Lemmermeyer2000}.

\begin{proposition}[Cubic reciprocity law]\label{P:cubrec}
    Let $\pi_1,\pi_2\in \ZZ[\omega]$ be primary primes such that $N(\pi_1)\ne 3, N(\pi_2)\ne 3,$ and $N(\pi_1)\ne N(\pi_2)$. Then we have 
    \begin{equation*}\
    \left(\frac{\pi_1}{\pi_2}\right)_3=\left(\frac{\pi_2}{\pi_1}\right)_3.
    \end{equation*}
\end{proposition}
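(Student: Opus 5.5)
The plan is the classical Gauss–Jacobi sum argument, as in \cite[\S 9.3--9.4]{IR1990}. First I would split into three cases according to whether $\pi_1,\pi_2$ are rational primes $q\equiv 2\pmod 3$ (inert in $\ZZ[\omega]$, primary up to sign $-q$) or non-rational primes of prime norm $p\equiv 1\pmod 3$.

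\emph{Case 1: both rational.} Say $\pi_1=-q_1$ and $\pi_2=-q_2$. For any rational integer $a$ coprime to $q$ we have $N(q)=q^2$. Hence $a^{(q^2-1)/3}=(a^{q-1})^{(q+1)/3}\equiv 1\pmod q$, so both symbols equal $1$.

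\emph{Key lemma.} For a primary $\pi$ of prime norm $p\equiv1\pmod3$, put $\chi_\pi=\left(\frac{\cdot}{\pi}\right)_3$, viewed as a character on $\FF_p\cong\ZZ[\omega]/\pi$. Let $g(\chi_\pi)=\sum_t\chi_\pi(t)\zeta_p^t$ be the Gauss sum and $J(\chi_\pi,\chi_\pi)$ the Jacobi sum. I would prove:
\begin{enumerate}
\item $g(\chi_\pi)^3=pJ(\chi_\pi,\chi_\pi)$;
\item $N(J)=p$;
\item $J\equiv 2\pmod 3$, which I get by expanding $J$ modulo $3$ and pairing $t$ with $1-t$.
\end{enumerate}
Together these give $J=\bar\pi$ or $J=\pi$. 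The identity $\sum_t \chi(t)^3=p-2$ and the congruence $J\equiv 0$ mod the conjugate then pin it down to $J(\chi_\pi,\chi_\pi)=\pi$. This normalization, matching the primarity condition to the Jacobi sum, is the step I expect to be the main obstacle. It is exactly where the hypothesis that $\pi$ is primary is needed, and the sign and conjugation bookkeeping must be done carefully.

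\emph{Case 2: $\pi_1=q\equiv 2\pmod 3$ rational and $\pi_2=\pi$ non-rational.} We need $\chi_\pi(q)=\chi_q(\pi)$. Set $g=g(\chi_\pi)$ and work in $\ZZ[\omega,\zeta_p]$ modulo $q$.
\begin{enumerate}
\item Raising $g^3=p\pi$ to the power $(q^2-1)/3$ gives $g^{q^2-1}\equiv (p\pi)^{(q^2-1)/3}\equiv\chi_q(p\pi)=\chi_q(\pi)$, using Case 1 for $\chi_q(p)=1$.
\item On the other hand, $g^{q^2}\equiv\sum_t\chi_\pi(t)^{q^2}\zeta_p^{q^2t}=\overline{\chi_\pi(q^2)}\,g=\chi_\pi(q)\,g$, by the Frobenius congruence and reindexing, using $q^2\equiv 1\pmod 3$ so that $\chi^{q^2}=\chi$.
\item Since $g\bar g=\chi_\pi(-1)p$ is a unit modulo $q$, we may cancel $g$ and conclude $\chi_\pi(q)=\chi_q(\pi)$.
\end{enumerate}

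\emph{Case 3: both non-rational, with norms $p_1\neq p_2$.} Set $\gamma_1=\bar\pi_1$ and $\gamma_2=\bar\pi_2$, and run the same computation with $g(\chi_{\pi_1})$ modulo $\pi_2$ and $g(\chi_{\pi_2})$ modulo $\pi_1$.
\begin{enumerate}
\item From $g^{p_2-1}\equiv (p_1\pi_1)^{(p_2-1)/3}\pmod{\pi_2}$ and $g^{p_2}\equiv\chi_{\pi_1}(p_2)^{-1}g$, I would get $\chi_{\pi_2}(p_1\pi_1)=\chi_{\pi_1}(p_2)^{-1}$, which is the same as $\chi_{\pi_2}(\gamma_1)^{-1}=\chi_{\pi_1}(\pi_2)^{-1}\chi_{\pi_1}(\gamma_2)^{-1}$, since $p_1=\pi_1\gamma_1$ and $p_2=\pi_2\gamma_2$.
\item Symmetrically I obtain the analogous identity with indices swapped.
\item I also use the conjugation identity $\chi_{\bar\pi}(\bar\alpha)=\overline{\chi_\pi(\alpha)}$, together with $\bar\pi_i$ again primary.
\end{enumerate}
Combining these three relations cancels the mixed terms and yields $\chi_{\pi_1}(\pi_2)=\chi_{\pi_2}(\pi_1)$.
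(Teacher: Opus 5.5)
Your route is the standard Gauss--Jacobi sum proof given in Ireland--Rosen; the paper itself quotes this law from Ireland--Rosen and Lemmermeyer without proof. Cases 1 and 2 are fine: the primary associate of a rational $q\equiv 2\pmod 3$ is $q$, not $-q$, but $\chi(-1)=1$ makes this harmless. The gaps are in the Jacobi sum lemma and in Case 3.

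\textbf{Jacobi sum lemma.} Pairing $t$ with $1-t$ cannot give $J\equiv 2\pmod 3$. This involution has the single fixed point $t=1/2$, so it only shows $J=2S+\chi_\pi(2)$, which is a statement modulo $2$. Use instead $g^3\equiv\sum_t\chi_\pi(t)^3\zeta_p^{3t}=-1\pmod 3$, together with $g^3=pJ$ and $p\equiv 1\pmod 3$. Alternatively, add the symmetry $t\mapsto 1/t$: the resulting group has orbits on $\FF_p\setminus\{0,1\}$ of sizes $6$ and $3$, plus one orbit of size $2$ formed by the roots of $t^2-t+1$, on which $t(1-t)=1$. More seriously, the normalization you flag is not achieved. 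Since $\pi$ and $\bar\pi$ are both primary, no congruence modulo $3$ (such as your count $p-2$) can separate them. Moreover, $J\equiv 0$ modulo $\bar\pi$ would force $J=\bar\pi$, which would turn Case 2 into $\chi_\pi(q)=\overline{\chi_q(\pi)}$. The missing argument is $\pi\mid J$. With $m=(p-1)/3$ one has $\chi_\pi(t)\equiv t^m\pmod\pi$, hence $J\equiv\sum_{t\in\FF_p}t^m(1-t)^m\pmod\pi$. This sum vanishes because the polynomial has degree $2m<p-1$ and $\sum_{t\in\FF_p}t^j\equiv 0\pmod p$ for $0\le j<p-1$. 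Since $\pi\nmid\bar\pi$, this gives $J=\pi$.

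\textbf{Case 3.} Your rewriting of the first relation is an algebra error. Since $p_1\pi_1=\pi_1^2\gamma_1$ and $z^2=z^{-1}$ for cube roots of unity, the relation $\chi_{\pi_2}(p_1\pi_1)=\chi_{\pi_1}(p_2)^{-1}$ reads
\[
\chi_{\pi_2}(\pi_1)^{-1}\chi_{\pi_2}(\gamma_1)=\chi_{\pi_1}(\pi_2)^{-1}\chi_{\pi_1}(\gamma_2)^{-1}.
\]
Your version differs from this by the factor $\chi_{\pi_2}(p_1)$, so it is false whenever $p_1$ is not a cubic residue modulo $\pi_2$. Combined with its mirror image it yields $\chi_{\pi_1}(\pi_2)\chi_{\pi_2}(\pi_1)=1$, which is the wrong law. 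With the correct relation, divide it by its mirror image (indices $1\leftrightarrow 2$ swapped). This cancels $\chi_{\pi_1}(\gamma_2)$ and $\chi_{\pi_2}(\gamma_1)$ and leaves $\chi_{\pi_1}(\pi_2)^2=\chi_{\pi_2}(\pi_1)^2$, hence equality of these cube roots of unity. So with your choice of Gauss sums ($g(\chi_{\pi_1})$ modulo $\pi_2$ and $g(\chi_{\pi_2})$ modulo $\pi_1$), the identity $\chi_{\bar\pi}(\bar\alpha)=\overline{\chi_\pi(\alpha)}$ is not needed at all. It is needed only if, as in Ireland--Rosen, one works with $g(\chi_{\bar\pi_1})$ modulo $\pi_2$.
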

Eisenstein established an analogue of Proposition~\ref{P:cubrec} for the sextic residue symbol, commonly known as the sextic reciprocity law. To state the law, we need a notion of \textit{E-primary} primes, which is slightly different from that of primary primes. The original definition of an \textit{E}-primary element in $\ZZ[\omega]$ is due to Eisenstein and is stated in \cite[\S 7.3]{Lemmermeyer2000}. In fact, if $\alpha \in \ZZ[\omega]$ is coprime to $6$, then $\alpha$ is \textit{E}-primary if and only if
\begin{equation*}
    \alpha\equiv \pm 1 \pmod 3 \text{ and } \alpha^3=A+B\omega \text{ with } A+B \equiv 1 \pmod 4
\end{equation*}
(see, e.g., \cite[\S 7]{SS2011}). Using the above criterion, one sees easily that if $\pi\equiv \pm 1 \pmod 3$ is coprime to $6$, then either $\pi$ or $-\pi$ is \textit{E}-primary.

\begin{proposition}[Sextic reciprocity law] \label{P:sexrec}
If $\pi_1,\pi_2\in \ZZ[\omega]$ are \textit{E}-primary primes satisfying $(\pi_1\pi_2,6)=(\pi_1,\pi_2)=1$, then 
\begin{equation*}
    \left(\frac{\pi_1}{\pi_2}\right)_6= (-1)^{\frac{N\pi_1-1}{2}\frac{N\pi_2-1}{2}} \left(\frac{\pi_2}{\pi_1}\right)_6.
\end{equation*}
\end{proposition}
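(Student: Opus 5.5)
The plan is to split the sextic symbol into its quadratic and cubic parts and treat each with a separate reciprocity law. For $\mathfrak p$ coprime to $6$ with $\alpha\notin\mathfrak p$, write $\chi_n(\alpha)=\left(\frac{\alpha}{\mathfrak p}\right)_n$. By \eqref{E:reduce} we have $\chi_6^3=\chi_2$ and $\chi_6^2=\chi_3$. Since $\chi_6=\chi_6^{3}\cdot\chi_6^{-2}$, this gives
\[
\left(\frac{\alpha}{\mathfrak p}\right)_6=\left(\frac{\alpha}{\mathfrak p}\right)_2\,\overline{\left(\frac{\alpha}{\mathfrak p}\right)_3}.
\]
So it suffices to prove two things:
\[
\left(\frac{\pi_1}{\pi_2}\right)_3=\left(\frac{\pi_2}{\pi_1}\right)_3,
\qquad
\left(\frac{\pi_1}{\pi_2}\right)_2=(-1)^{\frac{N\pi_1-1}{2}\frac{N\pi_2-1}{2}}\left(\frac{\pi_2}{\pi_1}\right)_2,
\]
for E-primary primes $\pi_1,\pi_2$ that are coprime to each other and to $6$.

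\emph{Cubic part.} An E-primary $\pi$ satisfies $\pi\equiv\pm1\pmod 3$, so exactly one of $\pm\pi$ is primary in the sense of \cite[\S 9.3]{IR1990}, i.e.\ $\equiv 2\pmod 3$. Since $-1=(-1)^3$ is a cube, $\left(\frac{-1}{\mathfrak p}\right)_3=1$. Hence the cubic symbols are unchanged when we replace $\pi_i$ by the primary associate $\pm\pi_i$.

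\begin{itemize}
\item If $N\pi_1\ne N\pi_2$, Proposition~\ref{P:cubrec} gives the symmetry directly.
\item If $N\pi_1=N\pi_2$, then, since $(\pi_1,\pi_2)=1$, the two primes are either rational primes $\pm q$ or conjugates $\pi_2=\pm\bar\pi_1$. Both cases are dispatched directly. Rational integers coprime to $3$ are cubic residues modulo a rational prime $q\equiv 2\pmod 3$. For the conjugate case one compares $\left(\frac{\pi}{\bar\pi}\right)_3$ and $\left(\frac{\bar\pi}{\pi}\right)_3$ by applying complex conjugation, using that complex conjugation sends $\left(\frac{\alpha}{\mathfrak p}\right)_3$ to $\left(\frac{\bar\alpha}{\bar{\mathfrak p}}\right)_3$.
\end{itemize}

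\emph{Quadratic part.} This is quadratic reciprocity in $F=\QQ(\sqrt{-3})$, and I would prove it through Hilbert's product formula for the quadratic Hilbert symbol $(\pi_1,\pi_2)_v$ over all places $v$ of $F$.

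\begin{itemize}
\item $F$ is totally imaginary, so there are no real places.
\item At a finite $v\nmid 2\pi_1\pi_2$ both elements are units and $v$ is tame, so $(\pi_1,\pi_2)_v=1$. This includes the ramified prime $\sqrt{-3}$, since $\pi_1,\pi_2$ are units there.
\item At $v=\pi_2$ and $v=\pi_1$ the tame symbols are $\left(\frac{\pi_1}{\pi_2}\right)_2$ and $\left(\frac{\pi_2}{\pi_1}\right)_2$.
\end{itemize}

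So the product formula reduces the claim to
\[
\left(\frac{\pi_1}{\pi_2}\right)_2\left(\frac{\pi_2}{\pi_1}\right)_2=(\pi_1,\pi_2)_{(2)},
\]
where $2$ is inert in $F$ and $F_{(2)}$ is the unramified quadratic extension of $\QQ_2$.

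\emph{Main obstacle: the local computation at $2$.} Show that $(\pi_1,\pi_2)_{(2)}=(-1)^{\frac{N\pi_1-1}{2}\frac{N\pi_2-1}{2}}$ using the E-primary normalization.

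\begin{enumerate}
\item Since $\pi^3\equiv\pi\pmod{\text{squares}}$, we may replace $\pi_i$ by $\pi_i^3=A_i+B_i\omega$ with $A_i+B_i\equiv1\pmod 4$.
\item The unit group of $\ZZ[\omega]/4$ modulo squares is small. The condition $A+B\equiv1\pmod 4$ should pin $\pi^3$ down to the coset of $1$ or of $-1$ times squares times $1+4\mathcal O$. Here the sign is exactly $(-1)^{(N\pi-1)/2}$, which is determined by $N\pi\pmod 4$.
\item Elements of $1+4\mathcal O_{(2)}$ lie in $N\bigl(F_{(2)}(\sqrt{u})\bigr)$ for any unit $u$, because unramified extensions have every unit as a norm.
\item This reduces $(\pi_1,\pi_2)_{(2)}$ to $(\pm1,\pm1)_{(2)}$. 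I expect to have to check by hand that this symbol takes the value $(-1)^{\frac{N\pi_1-1}{2}\frac{N\pi_2-1}{2}}$. This tabulation, together with verifying that the congruence $A+B\equiv 1\pmod 4$ fixes the right square class, is the delicate part.
\end{enumerate}

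A possible shortcut is to compare with rational quadratic reciprocity when $\pi_i$ has prime norm $p_i$. Then $\left(\frac{\pi_1}{\pi_2}\right)_2$ can be expressed through Legendre symbols modulo $p_2$, which gives a consistency check on the $2$-adic computation.

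Combining the cubic and quadratic parts in the factorization of $\left(\frac{\cdot}{\mathfrak p}\right)_6$ then yields the stated sextic law.
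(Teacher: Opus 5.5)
The paper does not prove this law; it only quotes it from Ireland--Rosen and Lemmermeyer. Your architecture is sound: you write $\left(\frac{\cdot}{\mathfrak p}\right)_6=\left(\frac{\cdot}{\mathfrak p}\right)_2\overline{\left(\frac{\cdot}{\mathfrak p}\right)_3}$, use cubic reciprocity for the cubic factor, and use Hilbert's product formula over $F=\QQ(\sqrt{-3})$ to reduce the quadratic factor to $(\pi_1,\pi_2)_{(2)}$. But \textbf{the step you call the main obstacle fails as you formulated it.}

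Modulo $4$, cubing kills $\mu_3$ and fixes $1+2\ZZ[\omega]$. Hence $\pi^3\bmod 4\in\{1,\,3,\,1+2\omega,\,3+2\omega\}$, with $A+B\equiv 1,3,3,1\pmod 4$ respectively. So E-primarity forces $\pi^3\equiv 1$ or $\pi^3\equiv 3+2\omega\equiv-\sqrt{-3}\pmod 4$; for example, $(-1-3\omega)^3=-1+18\omega$. These are the classes of $1$ and $-\sqrt{-3}$ modulo squares times $1+4\mathcal{O}_{(2)}$, not $\pm1$.

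The class of $-1$ cannot accommodate $N\pi\equiv 3\pmod 4$, since every element of that class has norm $\equiv 1\pmod 4$. Moreover, your step 4 would return $(\pm1,\pm1)_{(2)}=1$ identically, because $-1=\omega+\omega^2$ is a sum of two squares in $F$.

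Here is a concrete counterexample to the plan. Take $\pi_1=-1-3\omega$ and $\pi_2=-2-3\omega$, both E-primary of norm $7$. Then $\pi_1-1=\pi_2$, so $\left(\frac{\pi_1}{\pi_2}\right)_2=1$. Also $\pi_2\equiv-1\pmod{\pi_1}$, so $\left(\frac{\pi_2}{\pi_1}\right)_2=-1$. This forces $(\pi_1,\pi_2)_{(2)}=-1$, which is impossible if both lay in the class of $\pm1$.

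The correct local computation combines $N\pi\equiv 3\pmod 4\iff\pi^3\equiv-\sqrt{-3}\pmod 4$ with
$(-\sqrt{-3},-\sqrt{-3})_{(2)}=(-\sqrt{-3},-1)_{(2)}=(N(-\sqrt{-3}),-1)_{\QQ_2}=(3,-1)_{\QQ_2}=-1$.

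The justification in step 3 is also backwards. $F_{(2)}(\sqrt u)$ is in general ramified. What is true is that $F_{(2)}(\sqrt{1+4x})$ is unramified: setting $\sqrt{1+4x}=1+2z$ gives $z^2+z=x$. Hence every unit is a norm from it, and $(u,1+4x)_{(2)}=1$ by symmetry.

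The cubic part also has a hole in the equal-norm case. The ``rational primes'' subcase is vacuous, since two coprime primes cannot both be associates of one inert $q$. In the conjugate case $\pi_2=\pm\bar\pi_1$, complex conjugation only gives $\left(\frac{\bar\pi}{\pi}\right)_3=\overline{\left(\frac{\pi}{\bar\pi}\right)_3}$. You still need this value to be real, i.e.\ equal to $1$.

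One way to show this: let $t=\pi+\bar\pi\in\ZZ$. It is prime to $3p$, because $t\equiv\pm2\pmod 3$ and $\bar\pi\nmid\pi$. Then $\pi\equiv t\pmod{\bar\pi}$. Apply Proposition~\ref{P:cubrec} to the primary prime factors $\lambda$ of $t$, whose norms differ from $3$ and $p$, and use $\bar\pi\equiv-\pi\pmod\lambda$. This gives $\left(\frac{t}{\bar\pi}\right)_3=\left(\frac{t}{\pi}\right)_3=\overline{\left(\frac{t}{\bar\pi}\right)_3}$.

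With these two repairs your argument becomes a self-contained proof, which the paper does not supply. Its value is that it exhibits the sign $(-1)^{\frac{N\pi_1-1}{2}\frac{N\pi_2-1}{2}}$ as the local Hilbert symbol at the inert prime $2$.
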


\section{Proof of the main theorem}\label{S:proof}
Throughout this paper, for any elliptic curve $E/\QQ$ which has good reduction at a prime $p$ with the  Frobenius trace $a_p$, we denote
\begin{equation*}\label{E:AEp}
    A_p(E)=(p+1)^2-a_p^2.
\end{equation*}
As pointed out in Odabaş's thesis \cite[Ch.~5]{Odabas2023}, the forward direction of Theorem \ref{T:main} (without using complex multiplication) is obvious and can be elaborated as follows. Suppose $E$ has a $k$-torsion point, say $\alpha$, with rational $x$-coordinate. Then, for any sufficiently large prime $p$, its reduction $\bar{\alpha}$ modulo $p$ must be a root of the (reduced) $k$-th division polynomial, implying that the induced $k$-th Latt\`{e}s map $L_k:\PP^1(\FF_p)\rightarrow \PP^1(\FF_p)$ sends both $\bar{\alpha}$ and $\infty$ to $\infty$. Therefore, it remains to prove the converse. Let $k\in \NN$ and assume that $E$ has no $k$-torsion point with rational $x$-coordinate. Note that $L_1$ is the identity map, which is clearly arithmetically exceptional. Hence we may assume that $k>1$. By Proposition~\ref{P:perm}, it suffices to show that there are infinitely many primes $p$ for which $(A_p(E),k)=1$. We consider $E$ in different cases, depending on its CM discriminant (i.e. the discriminant of $\mathcal{O}$), which is denoted by $D$. All cases rely crucially on either Dirichlet's theorem on arithmetic progressions or an explicit version of Chebotarev density theorem, which can be seen as a generalization of Dirichlet's theorem. We shall also frequently refer to a result of Olson \cite{Olson1974} on the classification of torsion subgroups $\Et$ of $E(\QQ)$, which is summarized in Table~\ref{tab:summary} below. 
\subsection{Case $D\in \{-4,-8,-16\}$}
First, assume that $D\in \{-4,-16\}$, which is corresponding to the case $K=\QQ(\sqrt{-1})$. Then $\Et$ is isomorphic to $C_2,C_2\times C_2$, or $C_4$. Hence $E$ always has a rational $2$-torsion point and $k$ must be odd. By Chinese Remainder Theorem and Dirichlet's theorem, there exist infinitely many primes $p$ such that $p\equiv 3 \pmod 4$ and $p \equiv 1 \pmod k$. Moreover, for all such $p$ we have that $\left(\frac{D}{p}\right)=-1$ (i.e. $p$ is inert in $K$) and, by Proposition~\ref{P:trace},
\[(A_p(E),k)=((p+1)^2,k)=(4,k)=1.\]
On the other hand, if $D=-8$, then $\Et\simeq C_2$, so $k$ is odd. Hence we can apply similar arguments by taking primes $p$ satisfying $p\equiv 5 \pmod 8$ and $p\equiv 1 \pmod k$.
\subsection{Case $D\in \{-7,-28\}$} In this case, we have $\Et\simeq C_2$, so $k$ is odd. If $p$ is a prime satisfying $p\equiv 5 \pmod 7$ and $p\equiv -2 \pmod k$, then $\left(\frac{D}{p}\right)=-1$ and
\[(A_p(E),k)=((p+1)^2,k)=(1,k)=1.\]
Again, the infinitude of such primes $p$ is guaranteed by Chinese Remainder Theorem and Dirichlet's theorem.
\subsection{Case $D\in \{-19,-43,-67,-163\}$} Note first that for each $D$ in this set $\left(\frac{D}{2}\right)=\left(\frac{D}{3}\right)=-1$, so $2$ and $3$ are inert primes in $K=\QQ(\sqrt{D}).$ It follows that, for every prime $\lambda\in \mathcal{O}_K$, $\left|\left(\mathcal{O}_K/\lambda\mathcal{O}_K\right)^\times\right|=N(\lambda)-1\ge 3$. Now let the radical of $k$ be written as  $\rad(k)=\ell_1\ell_2\cdots \ell_r$, where $\ell_i's$ are (distinct) rational primes and, for each $1\le i\le r$, let $\lambda_i\in \mathcal{O}_K$ be a prime above $\ell_i$. Also, suppose $(|D|)=(\lambda_D)^2,$ where $\lambda_D\in \mathcal{O}_K$. Using quadratic reciprocity, one sees that, for any prime $p\ne |D|$, $\left(\frac{D}{p}\right)=\left(\frac{p}{|D|}\right)$, so $p$ is inert in $K$ if and only if $p$ is a quadratic non-residue modulo $|D|$. Since there are exactly $\frac{|D|-1}{2}$ quadratic non-residues modulo $|D|$ and $\frac{|D|-1}{2}+2< |D|-1 =\left|\left(\mathcal{O}_K/\lambda_D\mathcal{O}_K\right)^\times\right|$, we have by Chebotarev density theorem that there are infinitely many primes $\pi\in \mathcal{O}_K$ such that $\pi \not\equiv \pm 1 \pmod{\lambda_i}$, $\pi \not\equiv \pm 1 \pmod{\bar{\lambda}_i}$, and $\pi \not\equiv t \pmod{\lambda_D}$ for every $1\le i\le r$ and every quadratic non-residue $t$ modulo $|D|$. Hence any prime $\pi$ satisfying the above system of congruences must be a splitting prime. If $N(\pi)=p$, then, using the fact that $\mathcal{O}_K^\times=\{\pm 1\}$, we immediately have from Lemma~\ref{L:nonunit} that $\ell_i \nmid A_p(E)$ for every $i$. Consequently, we can conclude that $(A_p(E),k)=1$ for infinitely many primes $p$.
\subsection{Case $D=-12$} In this case, we have $K=\QQ(\sqrt{-3})$, $\mathcal{O}=\ZZ+2\mathcal{O}_K$, and $j(E)=54000$. Suppose $E$ has a short Weierstrass form $y^2=x^3+Ax+B$, where $A,B\in \ZZ$. Since $j(E)=1728\frac{4A^3}{4A^3+27B^2}$, it can be seen easily that $A=-15u^2$ and $B=22u^3$ for some $u\in \ZZ.$ By a simple calculation, we have that the associated $3$-rd division polynomial can be factored as
\begin{equation*}
     \psi_3(x) = 3x^4+6Ax^2+12Bx-A^2=3(x-3u)(x^3+3ux^2-21u^2x+25u^3).
\end{equation*}
\iffalse
\begin{align*}
    \psi_3(x) &= 3x^4+6Ax^2+12Bx-A^2\\
    &= 3x^4-90u^2x^2+264u^3x-225u^4\\
    &=3(x-3u)(x^3+3ux^2-21u^2x+25u^3).
\end{align*}
\fi
Therefore, $E$ has a $3$-torsion point $P$ such that $x(P)=3u$, which is rational. Moreover, since $\Et\simeq C_2$ or $C_6$, $E$ always has a rational $2$-torsion point, implying $(k,6)=1$. Hence, for any prime $p$ satisfying $p\equiv 2 \pmod 3$ and $p\equiv 1 \pmod k$, \[(A_p(E),k)=((p+1)^2,k)=(4,k)=1.\] Again, since the above congruence system is compatible, there must be an infinite number of primes $p$ which are solutions to the system.
\subsection{Case $D=-27$} In this case, we have $K=\QQ(\sqrt{-3}), \mathcal{O}=\ZZ+3\mathcal{O}_K$, and $j(E)=-2^{15}\cdot 3\cdot 5^3$. By analyzing the $j$-invariant of $E$, one sees that $E$ has a parametrized Weierstrass form $y^2=x^3-120u^2x+506u^3$, where $u\in \ZZ\backslash\{0\}.$ Furthermore, the associated $3$-rd division polynomial factors as
\[\psi_3(x)=3(x-6u)(x^3+6ux^2-204u^2x+800u^3),\]
whence $E$ has a $3$-torsion point with rational $x$-coordinate. It follows that $(k,6)=1$ or $(k,6)=2$. If $(k,6)=1$, as in the previous case, we can simply choose primes $p$ satisfying $p\equiv 2 \pmod 3$ and $p\equiv 1 \pmod k$. Now suppose $(k,6)=2.$ Then $\rad(k)=2m$, where $m=\prod_{i=1}^r \ell_i$ is a (possibly empty) product of distinct primes greater than $3$. For any prime $\ell\notin\{2,3,7\}$ and a prime $\lambda\in \mathcal{O}_K$ above $\ell$, we have $\left|\left(\mathcal{O}_K/\lambda\mathcal{O}_K\right)^\times\right|\ge 12$. Therefore, by Chebotarev density theorem, we can find infinitely many primes $\pi$ which simultaneously satisfy the following congruences:
\begin{alignat*}{2}
    \pi &\equiv \omega & &\pmod 2\\
    \pi &\equiv 1 & &\pmod 3 \\
    \pi &\equiv \omega & &\pmod 7 \\
    \pi &\equiv a_i & &\pmod{\lambda_i} \\
    \pi &\equiv b_i & &\pmod{\overline{\lambda_i}},
\end{alignat*}
where $\lambda_i\in \mathcal{O}_K$ is a prime above $\ell_i\ne 7$, $a_i\not\equiv \pm1, \pm\omega,\pm \omega^2 \pmod {\lambda_i}$, and $b_i\not\equiv \pm1, \pm\omega,\pm \omega^2 \pmod {\overline{\lambda_i}}$. (If $\ell_i$ is inert, then we simply consider a single congruence modulo $\ell_i$, instead of those modulo $\lambda_i$ and $\overline{\lambda_i}$.) Since $\pi \equiv 1 \pmod 3$, it follows immediately that $\pi$ is a splitting prime in $\mathcal{O}$. Suppose $N(\pi)=p$. Then by \cite[Theorem~5.3]{RS2009} we have \begin{equation*}
    |E(\FF_p)|=|E(\mathcal{O}_K/\pi\mathcal{O}_K)|=p+1- \left(\frac{6\sqrt{-3}u}{\pi}\right)_2\epsilon(\pi)(\pi+\bar{\pi}),
\end{equation*}
where $\epsilon(\pi)=\pm 1$ (see \cite[Proposition~6.2]{RS2009} for its precise value). Consequently, we find that $|E(\FF_p)|\equiv \pi +\bar{\pi} \equiv \omega+\omega^2\equiv 1 \pmod 2$ and $|E(\FF_p)|\equiv 2\pm (\pi +\bar{\pi}) \equiv 2\pm (\omega+\omega^2)\equiv 2\pm (-1)\not\equiv 0 \pmod 7$. Similarly, if $E'$ is a quadratic twist of $E$, then $|E'(\FF_p)|\not\equiv 0 \pmod 2$ and $|E'(\FF_p)|\not\equiv 0 \pmod 7$. Moreover, by Lemma~\ref{L:nonunit}, $\ell_i\nmid |E(\FF_p)||E'(\FF_p)|$ for every $1\le i\le r$, implying $(A_p(E),k)=(|E(\FF_p)||E'(\FF_p)|,k)=1.$
\subsection{Case $D=-3$} In this case, we have $K=\QQ(\sqrt{-3}), \mathcal{O}=\mathcal{O}_K, j(E)=0$, and $E$ can be defined by the Weierstrass equation $E_d:y^2=x^3+d$, where $d$ is a sixth-power-free integer. Moreover, the torsion structure of $E_d(\QQ)$ can be described explicitly as follows:
\begin{equation}\label{E:torEd}
    E_d^\text{tor}(\QQ)\simeq \begin{cases}
        C_6, & \text{ if } d=1,\\
        C_3, & \text{ if } d\ne 1 \text{ is a square or } d=-432,\\
        C_2, & \text{ if } d\ne 1 \text{ is a cube},\\
        C_1, & \text{ otherwise}.
    \end{cases}
\end{equation}
This classification can be found in Exercise~10.19 of \cite{Silverman2009}. It should be remarked \eqref{E:torEd} is a corrected version, as `square' and `cube' are switched in the aforementioned reference. As in the previous case, since $(0,\pm \sqrt{d})$ are $3$-torsion points on $E_d$, either $(k,6)=1$ or $(k,6)=2$. If $(k,6)=1$, then we again choose primes $p$ satisfying $p\equiv 2 \pmod 3$ and $p\equiv 1 \pmod k$. Next, assume that $(k,6)=2$; i.e., $\rad(k)=2\prod_{i=1}^r \ell_i$, where $\ell_i's$ are primes larger than $3$. Our construction for an infinite sequence of primes $p$ with the desired property in this case is more subtle than that in the case $D=-27$, since we use a formula for $|E(\FF_p)|$ that involves the sextic residue symbol, instead of the quadratic residue symbol. More precisely, by Theorem 4 in \cite[Ch.~18]{IR1990}, for any primary splitting prime $\pi\in \mathcal{O}_K$, if $p=N(\pi)$, then 
\begin{equation}\label{E:EdFp}
    |E_d(\FF_p)|=p+1+\overline{\left(\frac{4d}{\pi}\right)}_6 \pi+\left(\frac{4d}{\pi}\right)_6 \bar{\pi} .
\end{equation} 
To make our proof concise, we need the following lemma.
\begin{lemma}\label{L:ab}
    Let $E_d$ and $K$ be defined as above and let $\ell>3$ be a prime such that $\ell\ne 7$. Then there exist $\alpha,\beta \in \mathcal{O}_K$ such that $(\alpha,\ell)=(\beta,\ell)=1$ and, for any prime $\pi\in \mathcal{O}_K$ which is coprime to $6\ell$ and is congruent to $1$ modulo $3$,
    \begin{equation*}
        \left(\frac{\ell}{\pi}\right)_6 \in \begin{cases}
            \{\pm 1\}  &\text{ if } \pi \equiv \alpha\pmod \ell,\\
            \{\pm \omega, \pm \omega^2\} &\text{ if } \pi \equiv \beta \pmod \ell.
        \end{cases}
    \end{equation*}
Moreover, if $N(\pi)=p$, then $\ell\nmid |E_d(\FF_p)||E_d'(\FF_p)|.$
\end{lemma}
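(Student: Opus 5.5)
The plan is to reduce the sextic symbol to a cubic one and then use cubic reciprocity. By \eqref{E:reduce}, $\left(\frac{\ell}{\pi}\right)_6^2=\left(\frac{\ell}{\pi}\right)_3$. So $\left(\frac{\ell}{\pi}\right)_6\in\{\pm1\}$ exactly when $\left(\frac{\ell}{\pi}\right)_3=1$, and $\left(\frac{\ell}{\pi}\right)_6\in\{\pm\omega,\pm\omega^2\}$ exactly when $\left(\frac{\ell}{\pi}\right)_3\ne 1$. It therefore suffices to show that $\left(\frac{\ell}{\pi}\right)_3$ depends only on $\pi \bmod \ell$, through a nontrivial cubic character. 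Throughout, $\pi\equiv 1\pmod 3$, so $-\pi$ is primary. Moreover $\left(\frac{\cdot}{\pi}\right)_3$ depends only on the ideal $(\pi)$, and $-1$ is a cube, so signs may be moved freely.

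If $\ell\equiv 2\pmod 3$, then $\ell$ is inert and itself primary. Proposition~\ref{P:cubrec}, applied to $\ell$ and $-\pi$ (whose norms $\ell^2\neq p$ differ), gives
\[
\left(\frac{\ell}{\pi}\right)_3=\left(\frac{-\pi}{\ell}\right)_3=\left(\frac{\pi}{\ell}\right)_3 .
\]
This is the cubic character $\chi_\ell$ of $(\mathcal{O}_K/\ell)^\times$, a group of order $\ell^2-1$ divisible by $3$, so $\chi_\ell$ is surjective onto $\{1,\omega,\omega^2\}$.

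If $\ell\equiv 1\pmod 3$, write $\ell=\lambda\bar\lambda$ with $\lambda,\bar\lambda$ primary; the product of two primary elements is $\equiv 1\pmod 3$, so no sign issue arises up to the harmless $\pm$. Applying Proposition~\ref{P:cubrec} to each factor gives
\[
\left(\frac{\ell}{\pi}\right)_3=\left(\frac{\pi}{\lambda}\right)_3\left(\frac{\pi}{\bar\lambda}\right)_3 .
\]
This is again a function of $\pi \bmod \ell$, via the product of the cubic characters $\chi_\lambda,\chi_{\bar\lambda}$ on $\FF_\ell^\times\times\FF_\ell^\times$.

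For the ``moreover'' part, the key observation is that \eqref{E:EdFp} reads $|E_d(\FF_p)|=N(1+\bar\chi\pi)$ with $\chi=\left(\frac{4d}{\pi}\right)_6$ a sixth root of unity. The twist gives, analogously, a norm of the form $N(1\pm\zeta\pi)$ for some sixth root of unity $\zeta$. So, as in Lemma~\ref{L:nonunit}, $\ell\mid |E_d(\FF_p)||E_d'(\FF_p)|$ forces $\pi\equiv u$ modulo some prime above $\ell$, for some $u\in\mathcal{O}_K^\times=\mu_6$. Hence I must choose $\alpha,\beta$ so that, modulo every prime above $\ell$, they avoid the six units, while still lying in the prescribed cubic classes. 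Since the congruence $\pi\equiv\alpha$ or $\beta\pmod\ell$ then prevents $\pi\equiv u$, the conclusion follows.

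The main obstacle is this counting step, and it is exactly where $\ell=7$ must be excluded.
\begin{itemize}
\item In the inert case, each cubic coset in $\FF_{\ell^2}^\times$ has $(\ell^2-1)/3\ge 8>6$ elements. So one can take $\alpha$ a non-unit cube and $\beta$ a non-unit non-cube.
\item In the split case, with $\ell\ge 13$, each coset in $\FF_\ell^\times$ has $(\ell-1)/3\ge 4$ elements. The units split as $\pm1$ in the trivial class and $\pm\omega$, $\pm\omega^2$ in the classes of $\omega$, $\omega^2$. These last two classes coincide with the trivial class only when $9\mid \ell-1$.
\item If $\omega$ is not a cube mod $\lambda$, every coset contains a non-unit. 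If $\omega$ is a cube, for instance $\ell=19$, the trivial coset may consist entirely of units, so I avoid it. By CRT I take $\alpha\equiv x\pmod\lambda$ and $\alpha\equiv y\pmod{\bar\lambda}$, with $x,y$ non-units in a nontrivial class $c$ and in $c^{-1}$ respectively; then the product of the two characters is $1$. For $\beta$, I take both components as non-units in the same nontrivial class $c$, so the product is $c^2\ne 1$.
\item Finally, $(\alpha,\ell)=(\beta,\ell)=1$ holds because each component is nonzero.
\end{itemize}
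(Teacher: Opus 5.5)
Your proof is correct. It has the same skeleton as the paper's: reciprocity turns the symbol into a character of $\pi \bmod \ell$, one picks non-unit representatives in suitable classes, and Lemma~\ref{L:nonunit} finishes. It differs from the paper in two worthwhile ways.

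First, the paper applies the sextic reciprocity law (Proposition~\ref{P:sexrec}) directly. That forces it to pass to \textit{E}-primary associates and carry a sign $\pm$ throughout, which is harmless since only membership in $\{\pm1\}$ matters. Your reduction $\left(\frac{\ell}{\pi}\right)_6\in\{\pm1\}\iff\left(\frac{\ell}{\pi}\right)_3=1$ lets you use only cubic reciprocity, where $-1$ is a cube and no signs appear.

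Second, in the split case the paper's counting does not locate the units among the classes. It therefore needs more than six classes with value $\pm1$, i.e.\ $\ell\ge 31$, and settles $\ell=13,19$ by explicit symbol computations. You observe that the units sit in the cubic cosets as $\{\pm1\},\{\pm\omega\},\{\pm\omega^2\}$, or all in the trivial coset when $9\mid\ell-1$. This shows the nontrivial cosets always contain non-units for $\ell\ge 13$. So your choices $(c,c^{-1})$ for $\alpha$ and $(c,c)$ for $\beta$ treat every split $\ell\ge 13$ at once. The paper's choice $\pi\equiv 5\pmod{19}$, with cubic values $\omega$ at $\lambda$ and $\omega^2$ at $\bar\lambda$, is exactly an instance of this. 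Your route also explains cleanly why $\ell=7$ is the only exclusion: there all six nonzero classes modulo $\lambda$ are units.

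Two small things to tidy. In the first part $\pi$ need not have prime norm (e.g.\ $\pi=-q$ with $q$ an inert rational prime), but the norms in Proposition~\ref{P:cubrec} still differ because $\pi$ is coprime to $\ell$. Also, your $(c,c^{-1})$ construction implicitly uses that $\omega$ is a cube modulo $\lambda$ if and only if it is one modulo $\bar\lambda$ (both hold iff $9\mid \ell-1$), which is worth stating explicitly.
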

\begin{proof}
     We first consider the case when $\ell$ is inert in $K$. Since
$\chi_\ell=(\frac{\cdot}{\ell})_6$ is a multiplicative character of exact order $6$ on
the cyclic group $(\mathcal O_K/\ell\mathcal O_K)^\times$ of order $\ell^2-1$, each sixth root of unity
occurs exactly $(\ell^2-1)/6$ times as a value of $\chi_\ell$. In particular, exactly $(\ell^2-1)/3$ residue classes
are mapped to $\pm1$. Since $5$ is the smallest inert prime in $K$, there are at least $8$ residue classes in $\left(\mathcal{O}_K/\ell\mathcal{O}_K\right)^\times$ which are mapped to $\pm 1$. Among these classes, we may choose $\alpha$ which is not congruent modulo $\ell$ to any $u\in \mathcal{O}_K^\times$. By Proposition~\ref{P:sexrec}, for any prime $\pi\equiv 1 \mod 3$ which is coprime to $6\ell$, if $\pi \equiv \alpha \pmod \ell$, then
    \[\left(\frac{\ell}{\pi}\right)_6=\pm \left(\frac{\pi}{\ell}\right)_6=\pm \left(\frac{\alpha}{\ell}\right)_6\in \{\pm 1\}.\]
    We also have from Lemma~\ref{L:nonunit} that $\ell\nmid |E_d(\FF_p)||E_d'(\FF_p)|,$ where $p=N(\pi)$.
    Since there are at least $16$ classes which are \textit{not} mapped to $\pm 1$ under $\chi_\ell$, we can apply the same arguments to prove the existence of $\beta$ with the desired property.

    Next, consider a split prime $\ell\ge 31;$ i.e., $\ell=\lambda\bar\lambda$, where $\lambda\in \mathcal{O}_K$ is a prime with norm $\ell$. We may also choose $\lambda$ so that $\lambda\equiv \bar\lambda \equiv 1 \pmod 3$. Since $\left|\left(\mathcal{O}_K/\lambda\mathcal{O}_K\right)^\times\right|= \left|\left(\mathcal{O}_K/\bar\lambda\mathcal{O}_K\right)^\times\right| =\ell-1\ge 30,$ there are at least $10$ residue classes modulo $\lambda$ (resp. modulo $\bar\lambda$) which are mapped to $\pm 1$ under $\chi_\lambda$ (resp. $\chi_{\bar\lambda}$) and at least $20$ classes which are mapped to $\pm \omega$ or $\pm \omega^2$. Hence we can find $\alpha_1$ (resp. $\alpha_2$) which is coprime to $\ell$ and is not in the same class with any $u\in \mathcal{O}_K^\times$ modulo $\lambda$ (resp. modulo $\bar\lambda$) such that $\left(\frac{\pi}{\lambda}\right)_6,\left(\frac{\pi}{\bar\lambda}\right)_6\in\{\pm1\}$ for any prime $\pi$ satisfying $\pi \equiv \alpha_1 \pmod \lambda$ and $\pi \equiv \alpha_2 \pmod{\bar\lambda}$. By solving this congruence system and suitably applying the sextic reciprocity law, we can find $\alpha$ coprime to $\ell$ such that, for any prime $\pi\equiv 1 \mod 3$ which is coprime to $6\ell$, if $\pi \equiv \alpha \pmod \ell$, then
    \[\left(\frac{\ell}{\pi}\right)_6=\left(\frac{\lambda}{\pi}\right)_6\left(\frac{\bar\lambda}{\pi}\right)_6=\pm \left(\frac{\pi}{\lambda}\right)_6\left(\frac{\pi}{\bar\lambda}\right)_6\in \{\pm 1\}.\]
    We can do an analogous construction for $\beta$, with the conditions $\left(\frac{\pi}{\lambda}\right)_6 \not\in \{\pm1\}$ and $\left(\frac{\pi}{\bar\lambda}\right)_6\in\{\pm1\}$ for $\pi \equiv \beta \pmod \ell$, so that $\left(\frac{\ell}{\pi}\right)_6\not\in \{\pm1\}$. In both cases, we can again apply Lemma~\ref{L:nonunit} to deduce that $\ell\nmid |E_d(\FF_p)||E_d'(\FF_p)|.$ 

    Finally, we treat the remaining split primes $\ell=13$ and $\ell=19$ separately. Note that $13=\lambda\bar\lambda$, where $\lambda=-1+3\omega$ and $\bar\lambda=-4-3\omega$, which are both E-primary. By direct calculations, we have that $\left(\frac{5}{\lambda}\right)_6=\left(\frac{5}{\bar\lambda}\right)_6=-1$ and $\left(\frac{5\omega}{\lambda}\right)_6=\left(\frac{5\omega}{\bar\lambda}\right)_6=-\omega^2$. Using the sextic reciprocity law, one sees that, for any prime $\pi \equiv 1 \pmod 3$ such that $(\pi,26)=1$, 
    \begin{equation*}
        \left(\frac{13}{\pi}\right)_6 = \begin{cases}
            \pm 1  &\text{ if } \pi \equiv 5 \pmod{13},\\
            \pm \omega  &\text{ if } \pi \equiv 5\omega \pmod{13}.
        \end{cases}
    \end{equation*}
    Since both $5$ and $5\omega$ are not congruent to any $u\in \mathcal{O}_K^\times$ modulo $\lambda$ and $\bar\lambda$, it follows from Lemma~\ref{L:nonunit} that $13 \nmid |E_d(\FF_p)||E_d'(\FF_p)|.$ Similarly, write $19=\lambda\bar\lambda$, where $\lambda=-2+3\omega$ and $\bar\lambda=5+3\omega$. Since $\left(\frac{5}{\lambda}\right)_6=\omega^2, \left(\frac{5}{\bar\lambda}\right)_6=\omega$, $\left(\frac{1+3\omega}{\lambda}\right)_6=\left(\frac{1+3\omega}{\bar\lambda}\right)_6=-\omega^2$, we can conclude that for any prime $\pi \equiv 1 \pmod 3$ such that $(\pi,38)=1$, 
    \begin{equation*}
        \left(\frac{19}{\pi}\right)_6 = \begin{cases}
            \pm 1  &\text{ if } \pi \equiv 5 \pmod{19},\\
            \pm \omega  &\text{ if } \pi \equiv 1+3\omega \pmod{19}.
        \end{cases}
    \end{equation*}
    Moreover, neither $5$ nor $1+3\omega$ is congruent to a unit modulo $\lambda$ and $\bar\lambda$, so $19 \nmid |E_d(\FF_p)||E_d'(\FF_p)|.$
\end{proof}

 Let us now resume the proof of the main result for $D=-3$. By the assumption, $E$ has no rational $2$-torsion point, so it follows from \eqref{E:torEd} that $d$ is not a perfect cube. Suppose $2^t \| d$. Then we can write $d=2^td_1d_2^2d_3^3$, where $d_1$ and $d_2$ are (possibly empty) products of distinct odd primes such that $(d_1,d_2)=1$ and $d_3$ is an odd integer. We divide the proof into three subcases, depending on $t$ modulo $3$. 

 \subsubsection{Subcase $t\equiv 0 \pmod 3$}\label{SS:0}
By \eqref{E:reduce}, Proposition~\ref{P:cubrec}, and the fact that $\left(\frac{\cdot}{\pi}\right)_2=\pm 1$, for any prime $\pi$ such that $(\pi,6)=1$,
\begin{equation}\label{E:case1}
    \left(\frac{4d}{\pi}\right)_6=\pm \left(\frac{d_1}{\pi}\right)_6 \left(\frac{d_2}{\pi}\right)_3\left(\frac{2}{\pi}\right)_3=\pm\left(\frac{d_1}{\pi}\right)_6 \left(\frac{d_2}{\pi}\right)_3\left(\frac{\pi}{2}\right)_3 =\pm \left(\frac{d_1}{\pi}\right)_6 \left(\frac{d_2}{\pi}\right)_3 \pi.
\end{equation}
Substituting \eqref{E:case1} into \eqref{E:EdFp} yields
\begin{equation}\label{E:mod2}
\begin{aligned}
     |E_d'(\FF_p)|\equiv  |E_d(\FF_p)|&=p+1\pm \left(\overline{\left(\frac{d_1}{\pi}\right)_6 \left(\frac{d_2}{\pi}\right)_3} +\left(\frac{d_1}{\pi}\right)_6 \left(\frac{d_2}{\pi}\right)_3\right)p\\
     &\equiv \overline{\left(\frac{d_1}{\pi}\right)_6 \left(\frac{d_2}{\pi}\right)_3} +\left(\frac{d_1}{\pi}\right)_6 \left(\frac{d_2}{\pi}\right)_3 \pmod 2.
\end{aligned}
\end{equation}
This implies that, for any primary splitting prime $\pi$ with $N(\pi)=p$, $|E_d(\FF_p)|$ and  $|E_d'(\FF_p)|$ are odd if and only if $\left(\frac{d_1}{\pi}\right)_6 \left(\frac{d_2}{\pi}\right)_3\ne \pm 1$. Let $S$ be the set of odd prime divisors of $d_1d_2k$ and let $q_0$ be the smallest prime divisor of $d_1d_2$. (Note that $q_0$ exists since $3\mid t$ and $d$ is not a perfect cube.) Observe from \eqref{E:reduce} that $\left(\frac{\alpha}{\pi}\right)_3=1$ if and only if $\left(\frac{\alpha}{\pi}\right)_6=\pm 1$. Our strategy is to appropriately construct a congruence system whose prime solutions $\pi$ simultaneously satisfy $\left(\frac{q_0}{\pi}\right)_6\ne \pm 1$, $\left(\frac{q}{\pi}\right)_6= \pm 1$ for each (if any) prime divisor $q\ne q_0$ of $d_1d_2$, and, if $N(\pi)=p$, then $(|E_d(\FF_p)||E_d'(\FF_p)|,\ell)=1$ for any odd prime divisor $\ell$ of $k$. 

Assume first that $q_0=3$. For any prime $\pi\equiv 4-3\omega\pmod 9$, we have $\pi \equiv 1 \pmod 3$ and $N(\pi)\equiv 1\pmod 9$. Hence $-\pi$ is primary and $-\pi=a+b\omega,$ for some $a,b\in\ZZ$ such that $a\equiv 2 \pmod 3$ and $b\equiv 0\pmod 3$, so $b\equiv 0,3,6 \pmod 9$. Since $a=-\pi-b\omega\not\equiv -1 \pmod 9$, we can deduce using the definition of the cubic residue symbol and Theorem~1 in \cite[Ch.~9]{IR1990} that 
\begin{equation*}
\left(\frac{3}{\pi}\right)_6=\left(\frac{-\omega^2(1-\omega)^2}{\pi}\right)_6=\pm\left(\frac{\omega}{\pi}\right)_3\left(\frac{1-\omega}{\pi}\right)_3=\pm\left(\frac{1-\omega}{-\pi}\right)_3\ne \pm 1.
\end{equation*}
By the calculation above, we come up with the following system: $x \equiv 1 \pmod 2$, $x \equiv 4-3\omega \pmod 9$, $x \equiv 1 \pmod 7$, and for each prime $\ell\in S\backslash\{2,3,7\}$, $x \equiv \alpha \pmod \ell$, where $\alpha\in \mathcal{O}_K$ is chosen in accordance with Lemma~\ref{L:ab}. Let $\pi$ be a prime solution to this system with norm $p$. Since $7=\lambda\bar\lambda$, where $\lambda=-1-3\omega$ and $\bar\lambda =2+3\omega$, we have 
\begin{equation*} 
\left(\frac{7}{\pi}\right)_6=\left(\frac{\lambda}{\pi}\right)_6\left(\frac{\bar\lambda}{\pi}\right)_6=\pm \left(\frac{\pi}{\lambda}\right)_6\left(\frac{\pi}{\bar\lambda}\right)_6=\pm 1.
\end{equation*}
Hence, by Lemma~\ref{L:ab} and \eqref{E:mod2}, 
$(|E_d(\FF_p)||E_d'(\FF_p)|,2)=(|E_d(\FF_p)||E_d'(\FF_p)|,\ell_i)=1,$ for $\ell_i\ne 7$. In addition, since $-\pi$ is a primary splitting prime, it follows from \eqref{E:EdFp} and \eqref{E:case1} that
\begin{equation*}
    |E_d(\FF_p)|=p+1\pm \left(\overline{\left(\frac{d_1}{\pi}\right)_6 \left(\frac{d_2}{\pi}\right)_3} +\left(\frac{d_1}{\pi}\right)_6 \left(\frac{d_2}{\pi}\right)_3\right)p\equiv 2\pm (\omega+\omega^2)\not\equiv 0 \pmod 7.
\end{equation*}
Similarly, $|E_d'(\FF_p)|\not\equiv 0 \pmod 7$, whence $(|E_d(\FF_p)||E_d'(\FF_p)|,7)=1$. Therefore, we can conclude that $(|E_d(\FF_p)||E_d'(\FF_p)|,k)=1.$ The infinitude of such primes $p$ follows immediately from Chebotarev density theorem applied to the proposed congruence system.

Suppose $q_0=5$. As proven in Lemma~\ref{L:ab}, we can choose, for example, $\beta=2\omega$, so that $\left(\frac{5}{\pi}\right)_6\ne \pm 1$ and $5 \nmid |E_d(\FF_p)||E_d'(\FF_p)|$ for any prime $\pi$ satisfying the assumption in the lemma. Hence we can simply choose $\pi$ to be any prime such that $\pi \equiv 1 \pmod 2, \pi \equiv 1 \pmod 3, \pi \equiv 2\omega \pmod 5, \pi \equiv 1 \pmod 7$, and $\pi \equiv \alpha \pmod \ell$ for each $\ell\in S\backslash\{2,3,5,7\}$. Since this system is compatible, there must be infinitely many choices for $\pi$.

Next, assume that $q_0=7.$ Consider the congruence system
\begin{alignat*}{2}
    x &\equiv 1 & &\pmod 2\\
    x &\equiv 1 & &\pmod 3 \\
    x &\equiv \omega  & &\pmod 7\\
    x &\equiv \alpha & &\pmod \ell, 
\end{alignat*}
for each $\ell\in S\backslash\{2,3,7\}$ with $\alpha$ (depending on $\ell$) defined in Lemma~\ref{L:ab}. By direct calculations, for any prime $\pi\equiv \omega \pmod 7$, we have $\left(\frac{7}{\pi}\right)_6=\pm\omega^2$, $\left(\frac{7}{\pi}\right)_3=\pm\omega$, and $N(\pi)\equiv 1 \pmod 7$. Therefore, we can deduce using \eqref{E:EdFp}, \eqref{E:case1}, and Lemma~\ref{L:ab} that, for any prime $\pi$ satisfying the above system, if $N(\pi)=p$, then $(|E_d(\FF_p)||E_d'(\FF_p)|,2)=(|E_d(\FF_p)||E_d'(\FF_p)|,\ell_i)=1$ for $1\le i\le r$, implying $(|E_d(\FF_p)||E_d'(\FF_p)|,k)=1.$

For the remaining cases $q_0>7$, we again apply Lemma~\ref{L:ab} and solve the congruence system $x \equiv 1 \pmod 2, x \equiv 1 \pmod 3, x \equiv 1 \pmod 7, x \equiv \beta \pmod{q_0},$ and $x \equiv \alpha \pmod \ell$ for each $\ell\in S\backslash\{2,3,7,q_0\}$, in order to obtain infinitely many primes $\pi$ with $N(\pi)=p$ satisfying $(|E_d(\FF_p)||E_d'(\FF_p)|,k)=1.$ 

\begin{remark}
If $d$ is a perfect cube (i.e. $t\equiv 0\pmod 3$ and $d_1=d_2=1$), then we have from \eqref{E:mod2} that $|E_d(\FF_p)|\equiv 0 \pmod 2$ for any odd prime $p$ of good reduction. Thus if $(k,6)=2$, then the $k$-th Latt\`es map attached to $E_d$ is \textit{not} arithmetically exceptional. Hence the assumption that $d$ is not a perfect cube is crucial in the proof for this subcase.
\end{remark}

\subsubsection{Subcase $t\equiv 1 \pmod 3$} By the same calculation as in \eqref{E:case1}, we have that
\begin{equation*}
     \left(\frac{4d}{\pi}\right)_6 =\pm \left(\frac{d_1}{\pi}\right)_6 \left(\frac{d_2}{\pi}\right)_3 .
\end{equation*}
Hence for any primary splitting prime $\pi$ with norm $p$ 
\begin{equation*}
\begin{aligned}
     |E_d'(\FF_p)|\equiv  |E_d(\FF_p)|&=p+1\pm \left(\overline{\left(\frac{d_1}{\pi}\right)_6 \left(\frac{d_2}{\pi}\right)_3}\pi +\left(\frac{d_1}{\pi}\right)_6 \left(\frac{d_2}{\pi}\right)_3\bar\pi\right)\\
     &\equiv \overline{\left(\frac{d_1}{\pi}\right)_6 \left(\frac{d_2}{\pi}\right)_3}\pi +\left(\frac{d_1}{\pi}\right)_6 \left(\frac{d_2}{\pi}\right)_3\bar\pi \pmod 2.
\end{aligned}
\end{equation*}
If $d_1d_2=1$, then $|E_d'(\FF_p)|\equiv  |E_d(\FF_p)| =p+1\pm(\pi+\bar\pi)$. Choosing primes $\pi$ with norm $p$ such that $\pi \equiv \omega \pmod 2, \pi \equiv 1 \pmod 3, \pi \equiv \omega \pmod 7,$ and $\pi\equiv \alpha \pmod \ell$ for each odd prime divisor $\ell\ne 7$ of $k$, we obtain infinitely many primes $p$ for which $(|E_d(\FF_p)||E_d'(\FF_p)|,k)=1.$ On the other hand, if $d_1d_2>1$, then we can repeat the arguments in the previous subcase to obtain a compatible congruence system which generates infinitely many primes $p$ satisfying $(|E_d(\FF_p)||E_d'(\FF_p)|,k)=1.$ There is only an exception for $q_0=7$ that requires a slight modification to the system; the congruence $x\equiv \omega \pmod 7$ should be replaced with 
\begin{equation}\label{E:mod7}
    x \equiv \begin{cases}
        \omega & \text{ if } 7\mid d_1\\
        \omega^2 & \text{ if } 7\mid d_2
        \end{cases}
        \pmod 7
\end{equation}
to ensure that $(|E_d(\FF_p)||E_d'(\FF_p)|,7)=1.$

\subsubsection{Subcase $t\equiv 2 \pmod 3$} In this subcase, we have 
\begin{align*}
     |E_d'(\FF_p)|\equiv|E_d(\FF_p)|=&p+1\pm \left(\overline{\left(\frac{d_1}{\pi}\right)_6 \left(\frac{d_2}{\pi}\right)_3\pi} +\left(\frac{d_1}{\pi}\right)_6 \left(\frac{d_2}{\pi}\right)_3\pi\right) p\\
     &\equiv\overline{\left(\frac{d_1}{\pi}\right)_6 \left(\frac{d_2}{\pi}\right)_3\pi} +\left(\frac{d_1}{\pi}\right)_6 \left(\frac{d_2}{\pi}\right)_3\pi\pmod 2
\end{align*}
for any primary splitting prime $\pi$ such that $N(\pi)=p.$ 
We can then imitate the arguments in the subcase $t\equiv 1 \pmod 3$, except that $\omega$ and $\omega^2$ are switched in \eqref{E:mod7}, to obtain the desired result.

%Our goal is to show, with the aid of Lemma~\ref{L:ab}, that there exist a compatible system of congruences whose solutions $\pi$ satisfy the latter condition. Moreover, for each solution $\pi$ of this system, if $N(\pi)=p$, then we have $(|E_d(\FF_p)||E_d'(\FF_p)|,l)=1$ for any odd prime divisor $l$ of $k$.

%Then the moduli involved in the desired congruence system are precisely those in the set $S$. Using reciprocity laws and the fact that the values of $\left(\frac{\pi}{\lambda}\right)_n$ distribute equally among the $n^\text{th}$-roots of unity, we will show that it is always possible to construct such a compatible system. 

%%%%%%%%%%%%%%%%%%%%%%%%%%%%%%%%%%%%%%%%%%%
% REVISED SECTION 4 (clean; merged)
\section{The case $D = -11$}\label{S:11}

In this section we treat the remaining CM field $K=\mathbb{Q}(\sqrt{-11})$, which is excluded from
the result proved in Section~\ref{S:proof}. The obstruction comes from the fact that $3$ splits in $K$ and,
for split primes $p\neq 3$ of good reduction for $E$, forces a factor of $3$ in $A_p(E)$.
Combined with the unavoidable factor $2$ coming from inert primes $p>2$, this yields a genuine obstruction precisely when $6\mid k$.

Let $K=\mathbb{Q}(\sqrt{-11})$ and let $\mathcal{O}=\mathcal{O}_K=\mathbb{Z}[\theta]$, where
$\theta=(1+\sqrt{-11})/2$. Since $N(\theta)=3$, we have that $3$ splits
in $K$. Note also that $\mathcal{O}^\times=\{\pm1\}$. 
%Since $h_K=1$, every nonzero ideal of $\mathcal{O}$ is principal. We will write $\lambda\subset \mathcal{O}$ for prime ideals (especially in congruence conditions), and $\bar{\lambda}$ for the conjugate ideal. When we say that $\pi\in \mathcal{O}$ is a \emph{prime element}, we mean that $(\pi)$ is a nonzero prime ideal of $\mathcal{O}$. Note that if a rational prime $\ell$ is inert or ramified in $K$, then the unique prime ideal $\lambda\mid \ell$ is fixed by conjugation, i.e.\ $\bar{\lambda}=\lambda$. 
For the remainder of this section, we assume that $E/\mathbb{Q}$
has CM by $\mathcal{O}$.

\begin{lemma}\label{L:D11_forced3}
Let $p$ be a prime of good reduction for $E$ with $p\ne 3$ and $\left(\frac{-11}{p}\right)=1$.
Then $3 \mid A_p(E)$.
\end{lemma}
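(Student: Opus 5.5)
By Proposition~\ref{P:trace} and \eqref{E:Np}, whenever $\left(\frac{-11}{p}\right)=1$ (and $p\nmid 11$, which is automatic for a split prime) we can write $a_p=\pi+\bar\pi$ with $\pi\in\mathcal{O}=\mathbb{Z}[\theta]$ a prime of norm $p$. This gives
\[
A_p(E)=N\bigl((\pi-1)(\pi+1)\bigr)=N(\pi-1)\,N(\pi+1).
\]
The plan is to show that $3$ divides this product. We will work modulo the two primes above $3$, namely $\theta$ and $\bar\theta$. Since $N(\theta)=N(\bar\theta)=3$, both residue fields $\mathcal{O}/\theta\mathcal{O}$ and $\mathcal{O}/\bar\theta\mathcal{O}$ are isomorphic to $\mathbb{F}_3$. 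Also $\theta\bar\theta=3$, and $\theta,\bar\theta$ are non-associate: otherwise $\theta=\pm\bar\theta$, which is false because $\theta-\bar\theta=\sqrt{-11}$ and $\theta+\bar\theta=1$ are both nonzero.

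Next, since $p\ne 3$ and $N(\pi)=p$, the prime $\pi$ is not divisible by $\theta$. Hence the residue of $\pi$ modulo $\theta$ is a nonzero element of $\mathbb{F}_3$, so $\pi\equiv \pm1 \pmod{\theta}$. This is the key point, and it is exactly the failure of the hypothesis of Lemma~\ref{L:nonunit} for $\ell=3$: every nonzero residue modulo $\theta$ is a unit $\pm1$. It follows that $\theta$ divides $\pi-1$ or $\pi+1$. Consequently $3=N(\theta)$ divides $N(\pi-1)$ or $N(\pi+1)$, and therefore $3\mid A_p(E)$.

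As a sanity check, we can also argue purely rationally. We have $A_p(E)=(p+1-a_p)(p+1+a_p)$. Reducing $a_p=\pi+\bar\pi$ modulo $\theta$ gives $a_p\equiv \pi+\bar\pi\pmod 3$ in $\mathbb{Z}$, and computing modulo both $\theta$ and $\bar\theta$ shows $a_p\equiv p+1$ or $a_p\equiv -(p+1) \pmod 3$. The norm argument above is cleaner, though, and that is the one we will present.

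There is no real obstacle here. The only care needed is in invoking Proposition~\ref{P:trace}: we must confirm that $p$ is a prime of good reduction not dividing $D=-11$, which holds for split $p$. We must also note that $\pi$ lies in $\mathcal{O}_K=\mathcal{O}$ because $\mathcal{O}$ is maximal here.
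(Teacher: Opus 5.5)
Your proof is correct and is essentially the paper's argument: you reduce $\pi$ modulo the prime $(\theta)$ above $3$, whose residue field is $\FF_3$, use $\pi\notin(\theta)$ to get $\pi\equiv\pm1\pmod{\theta}$, and conclude $3=N(\theta)\mid N((\pi-1)(\pi+1))=A_p(E)$. The remarks that $\theta$ and $\bar\theta$ are non-associate and the rational sanity check are correct but not needed.
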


\begin{proof}
Since $\left(\frac{-11}{p}\right)=1$, Proposition~\ref{P:trace} gives
$a_p=\pi+\bar{\pi}$ for some prime $\pi\in \mathcal{O}$ with $N(\pi)=p$. 
Let $\mathfrak{q}=(\theta)$. Then $\mathfrak{q}$ is a prime ideal above $3$
and $\mathcal{O}/\mathfrak{q}\simeq \FF_3$. Because $p\neq 3$, we have $\pi\notin \mathfrak{q}$. Since $1$ and $-1$ are in different residue classes modulo $\mathfrak{q}$, we have $\pi\equiv \pm 1 \pmod{\mathfrak{q}}$, i.e.\ $\pi-1\in \mathfrak{q}$ or $\pi+1\in \mathfrak{q}$.
As noted after Proposition~\ref{P:trace}, it follows that
\[
3=N(\mathfrak{q})\mid N((\pi-1)(\pi+1))=A_p(E),
\]
as desired.
\end{proof}

\begin{proposition}\label{P:D11_6divk_notexc}
If $6\mid k$, then the $k$-th Latt\`{e}s map $L_k$ attached to $E$ is not arithmetically exceptional.
\end{proposition}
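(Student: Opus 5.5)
By Proposition~\ref{P:perm}, $L_k$ induces a bijection on $\PP^1(\FF_p)$ for a prime $p$ of good reduction exactly when $(A_p(E),k)=1$. The plan is to show that when $6\mid k$ we have $\gcd(A_p(E),6)>1$ for every prime $p$ of good reduction, with only finitely many exceptions. Then $L_k$ permutes $\PP^1(\FF_p)$ for at most finitely many primes $p$ (the excluded ones and the bad primes), so it is not arithmetically exceptional.

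Split the primes of good reduction $p\nmid 6\cdot 11$ according to the Legendre symbol $\left(\frac{-11}{p}\right)$. Since $E$ has CM by $\mathcal{O}_K$ of discriminant $D=-11$ and $p\nmid 11$, Proposition~\ref{P:trace} applies.

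\emph{Split case.} If $\left(\frac{-11}{p}\right)=1$ and $p\neq 3$, Lemma~\ref{L:D11_forced3} gives $3\mid A_p(E)$. Since $3\mid k$, this yields $(A_p(E),k)\ne 1$.

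\emph{Inert case.} If $\left(\frac{-11}{p}\right)=-1$, then $a_p=0$ by Proposition~\ref{P:trace}, so $A_p(E)=(p+1)^2$. For odd $p$ this is divisible by $4$, and $2\mid k$, so again $(A_p(E),k)\ne 1$.

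The leftover primes are only $p\in\{2,3,11\}$ and the finitely many primes of bad reduction. Hence the set of primes for which $L_k$ permutes $\PP^1(\FF_p)$ is finite, which proves the statement.

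I do not expect a serious obstacle. The one step needing care is the split case, and it is exactly the content of Lemma~\ref{L:D11_forced3}: because $\mathcal{O}/\mathfrak{q}\simeq\FF_3$ for the prime $\mathfrak{q}=(\theta)$ above $3$, the element $\pi$ must reduce to $\pm1$ modulo $\mathfrak{q}$. One should also confirm that the identity $A_p(E)=N((\pi-1)(\pi+1))$ from \eqref{E:Np} is used with the correct $\pi$ coming from Proposition~\ref{P:trace}. Any sign or unit ambiguity in $\pi$ is harmless here, since $\mathcal{O}_K^\times=\{\pm1\}$ and the product $(\pi-1)(\pi+1)$ is unchanged, up to sign, when $\pi$ is replaced by $-\pi$.
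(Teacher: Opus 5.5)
Your proposal is correct and follows the paper's proof essentially verbatim. You treat inert good primes $p>2$, where $a_p=0$ makes $A_p(E)=(p+1)^2$ even, and split good primes $p\neq 3$, where Lemma~\ref{L:D11_forced3} forces $3\mid A_p(E)$. You then conclude via Proposition~\ref{P:perm} that $L_k$ permutes $\PP^1(\FF_p)$ for at most finitely many $p$.
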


\begin{proof}
Let $p>2$ be a prime of good reduction for $E$ with $p\nmid 33$.

If $\left(\frac{-11}{p}\right)=-1$, then Proposition~\ref{P:trace} gives $a_p=0$, whence
$A_p(E)=(p+1)^2$
is even. Since $k$ is also even, we have $\bigl(A_p(E),k\bigr)\ge 2$.

If $\left(\frac{-11}{p}\right)=1$, then Lemma~\ref{L:D11_forced3} gives $3\mid A_p(E)$, which implies that $\bigl(A_p(E),k\bigr)\ge 3$.

In summary, $\bigl(A_p(E),k\bigr)\neq 1$ for every prime $p>2$ of good reduction with $p\nmid 33$.
By Proposition~\ref{P:perm}, $L_k$ fails to permute $\PP^1(\FF_p)$ for all but finitely many primes $p$,
and hence $L_k$ is not arithmetically exceptional.
\end{proof}

%As explained at the beginning of Section~\ref{S:proof}, the ``only if'' direction of Odaba\c{s}'s conjecture holds in general: if $E$ has a $k$-torsion point whose $x$-coordinate is rational, then $L_k$ cannot induce a permutation of $\PP^1(\FF_p)$ for all sufficiently large primes $p$.
We now prove that the converse of Proposition~\ref{P:D11_6divk_notexc} also holds.

\begin{proposition}\label{P:D11_6notdivk_exc}
If $6\nmid k$, then the $k$-th Latt\`{e}s map $L_k$ attached to $E$ is arithmetically exceptional.
\end{proposition}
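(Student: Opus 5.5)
By Proposition~\ref{P:perm}, it suffices to exhibit infinitely many primes $p$ of good reduction with $\bigl(A_p(E),k\bigr)=1$. We split according to whether $k$ is odd or $k$ is even with $3\nmid k$. In the first case we use inert primes, as in the cases $D\in\{-7,-28\}$. In the second case we use split primes, as in the case $D\in\{-19,-43,-67,-163\}$. Proposition~\ref{P:D11_6divk_notexc} shows that this dichotomy is forced: when $6\mid k$, neither type of prime can work.

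\emph{Case $k$ odd.} We choose primes $p$ with $p\equiv 2\pmod{11}$ (a quadratic non-residue mod $11$, so $\left(\frac{-11}{p}\right)=\left(\frac{p}{11}\right)=-1$) and $p\equiv -2\pmod{k'}$, where $k'$ is the part of $k$ coprime to $11$. If $11\mid k$, we use $p\equiv 2\pmod{11}$ instead of the condition modulo $11$; then $p+1\equiv 3\not\equiv 0\pmod{11}$. These conditions are compatible by the Chinese Remainder Theorem, so Dirichlet's theorem gives infinitely many such $p$. For each of them Proposition~\ref{P:trace} gives $a_p=0$, hence $A_p(E)=(p+1)^2$. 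For every prime $\ell\mid k$ we have $p+1\equiv -1$ (or $3$ when $\ell=11$), which is nonzero mod $\ell$. Hence $(A_p(E),k)=1$.

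\emph{Case $k$ even, $3\nmid k$.} Here inert primes fail, because $(p+1)^2$ is even. We therefore work with split primes $\pi\in\mathcal{O}_K$ and Lemma~\ref{L:nonunit}, which applies since $\mathcal{O}_K^\times=\{\pm1\}$.

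Write $\rad(k)=\ell_1\cdots\ell_r$ with $3\nmid \ell_i$. For each $\ell_i$ we pick a prime $\lambda_i$ above it and impose $\pi\not\equiv\pm1\pmod{\lambda_i}$ and $\pi\not\equiv\pm1\pmod{\bar\lambda_i}$. These conditions are satisfiable because each residue field has at least $4$ elements:
\begin{itemize}
\item $2$ is inert in $K$, since $-11\equiv 5\pmod 8$, so $\mathcal{O}_K/2\simeq\FF_4$ and there are two units mod $2$ other than $1$, namely $\theta$ and $\theta^2$.
\item Any prime $\ell\ge 5$ has residue field of size at least $5$, so at least $2$ nonzero classes differ from $\pm1$.
\item For $\ell=11$ (ramified), the residue field is $\FF_{11}$, which leaves room.
\end{itemize}

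Next we must force $\pi$ to be split and not merely prime. We add a congruence modulo $\lambda_{11}=(\sqrt{-11})$ placing $\pi$ in a class of quadratic residues mod $11$. Since $N(\pi)\equiv\pi^2\pmod{\lambda_{11}}$, this only requires that $N(\pi)$ be a square mod $11$, which is automatic. A degree-one prime of norm $p\ne 11$ must satisfy $\left(\frac{-11}{p}\right)=1$. By Chebotarev (Hecke's theorem on primes in ray classes of $K$, with modulus $\prod\lambda_i\bar\lambda_i$), there are infinitely many degree-one primes $\pi$ in the prescribed classes.

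For each such $\pi$, set $p=N(\pi)$. Then $A_p(E)=N((\pi'-1)(\pi'+1))$, where $\pi'=\pm\pi$ or $\pm\bar\pi$ is the Frobenius element from Proposition~\ref{P:trace}. Since we imposed conditions at both $\lambda_i$ and $\bar\lambda_i$, Lemma~\ref{L:nonunit} gives $\ell_i\nmid A_p(E)$ for every $i$, and hence $(A_p(E),k)=1$. The factor $3$ that Lemma~\ref{L:D11_forced3} forces into $A_p(E)$ is harmless, because $3\nmid k$.

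The only delicate point is this residue-count/Chebotarev step at $\ell=2$. There it is essential that $2$ is inert in $K$, so that $\mathcal{O}_K/2\simeq\FF_4$ leaves classes other than $\pm1\equiv 1$.
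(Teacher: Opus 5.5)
Your proof is correct and rests on the same two mechanisms as the paper's: inert primes supplied by Dirichlet's theorem, for which $A_p(E)=(p+1)^2$, and degree-one split primes supplied by Chebotarev together with Lemma~\ref{L:nonunit}. The one difference is the case boundary: you use inert primes for all odd $k$ and split primes for even $k$ with $3\nmid k$, whereas the paper uses inert primes only for $3\mid k$ and split primes for all $3\nmid k$; on the overlap (odd $k$ with $3\nmid k$) either method works. Your extra congruence modulo $\lambda_{11}$ is vacuous, as you observe yourself, since splitting is automatic for degree-one primes of norm $p\neq 11$, so it can simply be dropped.
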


\begin{proof}

 As before, by Proposition~\ref{P:perm}, it suffices to show that there are infinitely many primes $p$ such that $(|E(\FF_p)||E'(\FF_p)|,k)=1,$ where $E'$ is a quadratic twist of $E$. Assume first that $3\mid k$. Then $\rad(k)=3\ell_1\ell_2\cdots \ell_r$, where $\ell_i\ge 5$. By Dirichlet's theorem, there exists infinitely many primes $p$ satisfying $p\equiv 1 \pmod 3$, $p \equiv 2 \pmod{11}$, and $p\equiv 1 \pmod{\ell_i}$ for $1\le i \le r$ with $\ell_i\ne 11$. For any such prime $p$, we can use quadratic reciprocity to obtain
\[
\left(\frac{-11}{p}\right)=\left(\frac{p}{11}\right)=\left(\frac{2}{11}\right)=-1.
\]
Hence, by Proposition~\ref{P:trace}, for any good prime $p$ satisfying the above congruence system, $|E(\FF_p)||E'(\FF_p)|=(p+1)^2\equiv 1 \pmod 3$ and $|E(\FF_p)||E'(\FF_p)|\not \equiv 0 \pmod{\ell_i}$ for $1\le i\le r.$ Therefore, we have $(|E(\FF_p)||E'(\FF_p)|,k)=1$ for infinitely many primes $p$.

Next, assume that $3\nmid k$. Then we can write $\rad(k)=\ell_1\ell_2\cdots \ell_r$ for some distinct primes $\ell_i's$, none of which is equal to $3$. For each $1\le i\le r,$ let $\lambda_i\in \mathcal{O}$ be a prime above $\ell_i$. Since $2$ is inert in $K$, we have $|\left(\mathcal{O}/\lambda_i\mathcal{O}\right)^\times|\ge 3$. Hence we can find $a_i,b_i\in \mathcal{O}$ such that $a_i\not\equiv \pm 1 \pmod{\lambda_i}$ and $b_i\not\equiv \pm 1 \pmod{\overline{\lambda_i}}$. By Chebotarev density theorem, there are infinitely many primes $\pi\in \mathcal{O}$ which simultaneously satisfy $\pi \equiv 3 \pmod{11}$, $\pi \equiv a_i \pmod{\lambda_i},$ and $\pi \equiv b_i \pmod{\overline{\lambda_i}}$ for all $1\le i\le r$ with $\lambda_i\nmid 11$. (If $\ell_i$ is inert; i.e., $\lambda_i=\overline{\lambda_i}$, we simply take $a_i=b_i$.) The first congruence implies that these are splitting primes and it is clear that $3\not\equiv \pm 1 \pmod{\sqrt{-11}}$. Thus it follows immediately from Lemma~\ref{L:nonunit} that, if $N(\pi)=p$, then $\ell_i\nmid |E(\FF_p)||E'(\FF_p)|$ for every $1\le i \le r$, implying $(|E(\FF_p)||E'(\FF_p)|,k)=1$, as desired.
\end{proof}

By Proposition~\ref{P:D11_6notdivk_exc}, we immediately have the following result.
\begin{corollary}
    Let $E/\QQ$ be an elliptic curve having complex multiplication by $\QQ(\sqrt{-11})$ and let $k\in \NN$ with $6\nmid k$. Then Conjecture~\ref{C:Obadas} is true for the Latt\`es map $L_k$ attached to $E$.
\end{corollary}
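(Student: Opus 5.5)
The corollary asks for both directions of Conjecture~\ref{C:Obadas} for $L_k$ when $6\nmid k$. The ``only if'' direction holds for every elliptic curve over $\QQ$, as observed at the start of Section~\ref{S:proof}. If $E$ had a $k$-torsion point with rational $x$-coordinate, its reduction would be a second preimage of $\infty$ under $L_k$ on $\PP^1(\FF_p)$ for all large $p$. Then $L_k$ would permute $\PP^1(\FF_p)$ for only finitely many primes, so $L_k$ would not be arithmetically exceptional. Proposition~\ref{P:D11_6notdivk_exc} says that $L_k$ is arithmetically exceptional whenever $6\nmid k$. Hence the hypothesis ``$L_k$ is arithmetically exceptional'' is automatic, and the forward implication gives that $E$ has no $k$-torsion point with rational $x$-coordinate. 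The reverse implication is trivial, because the conclusion ``$L_k$ is arithmetically exceptional'' again holds outright by Proposition~\ref{P:D11_6notdivk_exc}.

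The plan is therefore to combine Proposition~\ref{P:D11_6notdivk_exc} with the general forward argument, and to conclude both sides of the biconditional are true. The one point to check is that Proposition~\ref{P:D11_6notdivk_exc} is stated under the section's standing assumption that $E$ has CM by the full ring $\mathcal{O}_K$.

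This causes no trouble, since $\mathcal{O}_K$ is the only order in $K=\QQ(\sqrt{-11})$ of class number one. Indeed, the class-number-one discriminants are $-3,-4,-7,-8,-11,-12,-16,-19,-27,-28,-43,-67,-163$, and the only one of the form $-11f^2$ is $-11$ itself. So any $E/\QQ$ with CM by $\QQ(\sqrt{-11})$ has CM by $\mathcal{O}_K$. Its $j$-invariant is $-32768$, which is rational, as required for $E$ to be defined over $\QQ$. With this in hand, Proposition~\ref{P:D11_6notdivk_exc} applies directly and there is no real obstacle beyond this bookkeeping.

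As a consequence, for $6\nmid k$ no such curve has a $k$-torsion point with rational $x$-coordinate. For example, when $k$ is even, $E$ has no rational $2$-torsion point.
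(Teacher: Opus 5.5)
Your proposal is correct and follows the paper's argument: the corollary is deduced immediately from Proposition~\ref{P:D11_6notdivk_exc}, with the general ``only if'' direction from the start of Section~\ref{S:proof} left implicit. Your extra check that any $E/\QQ$ with CM by $\QQ(\sqrt{-11})$ has CM by the full ring $\mathcal{O}_K$ is right, since $\mathcal{O}_K$ is the only class-number-one order in $K$, and it fills in a point the paper leaves unstated when it passes from the section's standing assumption to the corollary's hypothesis.
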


The next Proposition provides explicit CM curves showing that the conjecture fails when $6\mid k$. We recall the explicit Weierstrass models for elliptic curves over $\QQ$ with complex
multiplication by an order $\mathcal{O}$ of class number one.
Following Jim\'enez Urroz \cite{JimenezUrrozANTS}, any CM elliptic curve over $\QQ$
with discriminant $D\in\{-3,-4,-7,-8,-11,-19,-43,-67,-163\}$ admits a short Weierstrass equation of the form
\[
E_g:\quad y^2 = x^3 + g_4(g)\,x + g_6(g),
\]
where $g\in\QQ^\times$ and the coefficients $(g_4(g),g_6(g))$ are given explicitly as
quadratic and cubic polynomials in $g$, depending on $D$.
% For $D\neq -3,-4$, these coefficients are normalized so that for every prime $p$ of good
% ordinary reduction splitting in $K$, one has
% \[
% |E_g(\FF_p)| = p+1-(\pi+\bar{\pi}),
% \qquad N(\pi)=p,
% \]
% with $\pi\in\mathcal{O}$ a suitably chosen primary generator.
In particular, for $D=-11$, one may take
\[
(g_4(g),g_6(g))=\left(-\frac{2\cdot 11}{3}g^2,\,-\frac{7\cdot 11^2}{108}g^3\right),
\]
which yields a CM model with $j$-invariant $-2^{15}$.
By setting $g=-6u$, this family specializes to the model $E_u$ below which plays a distinguished role in the obstruction to arithmetic exceptionality when $6\mid k$.

%%%%%%%%%% generalize to parameter

\begin{proposition}\label{P:D11_counterexample_curve}
Let $u\in\QQ^\times$ and consider the elliptic curve
\[
E_u:\ y^2=x^3-264u^2x+1694u^3.
\]
 Then $E_u$ has complex multiplication by $\mathcal{O}$ and $E_u$ has no $2$-torsion and $3$-torsion points with rational $x$-coordinate. In particular, for every integer $k\ge 1$ with $6\mid k$, there is no $k$-torsion point $P\in E_u(\Qbar)$ such that $x(P)\in\QQ$.
%Then $j(E_u)=-2^{15}$, so $E_u$ has complex multiplication by $\mathcal{O}$. Moreover, the $2$- and $3$-division polynomials of $E_u$ have no root in $\QQ$; equivalently, $E_u$ has no $2$-torsion point and no $3$-torsion point with rational $x$-coordinate. Consequently, for every integer $k\ge 1$ with $6\mid k$ there is no point $P\in E_u(\Qbar)$ of \emph{exact} order $k$ such that $x(P)\in\QQ$.
%In particular, taking $u=1$ recovers the curve $E_0:\ y^2=x^3-264x+1694$.
\end{proposition}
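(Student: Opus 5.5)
Compute the $j$-invariant, then check that the $2$- and $3$-division polynomials have no rational roots; the final sentence follows from this.

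First, with $A=-264u^2$ and $B=1694u^3$,
\[
4A^3=-4\cdot 264^3u^6=-73598976\,u^6,\qquad 27B^2=27\cdot 1694^2u^6=77480172\,u^6,
\]
so $4A^3+27B^2=3881196\,u^6=2^2\cdot 3^3\cdot 11^4\,u^6$. Hence
\[
j(E_u)=1728\cdot\frac{4A^3}{4A^3+27B^2}=-2^{15}.
\]
This is the $j$-invariant of the class number one order of discriminant $-11$. Therefore $E_u$ has CM by $\mathcal{O}=\mathcal{O}_K$, and $E_u$ is a twist of the Jim\'enez Urroz model with $g=-6u$.

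Next, scale by $x=u X$. The $2$-torsion $x$-coordinates are the roots of $x^3-264u^2x+1694u^3=u^3(X^3-264X+1694)$. By the rational root theorem, a rational root $X$ must be an integer dividing $1694=2\cdot 7\cdot 11^2$. Checking the divisors $\pm1,\pm2,\pm7,\pm11,\pm14,\pm22,\pm77,\pm121,\dots$ shows none is a root. For example $X=11$ gives $1331-2904+1694=121\neq 0$, $X=-22$ gives $-10648+5808+1694\neq0$, and $X=7$ gives $343-1848+1694=189$. The large divisors are excluded by size, since $|X|^3$ dominates.

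Similarly,
\[
\psi_3(x)=3x^4+6Ax^2+12Bx-A^2=u^4\bigl(3X^4-1584X^2+20328X-69696\bigr).
\]
Any rational root $X=a/b$ has $b\mid 3$ and $a\mid 69696=2^6\cdot 3^2\cdot 11^2$. The main obstacle is verifying that this quartic has no rational root, and more generally ruling out a cubic factor times a linear factor. I would first reduce modulo a small prime $\ell$ to limit the candidates. Modulo $5$ the polynomial becomes $3X^4+X^2+3X-1$; checking $X=0,\dots,4$ gives the values $-1,\ 6,\ 63,\ 276,\ 815$, that is $4,1,3,1,0 \pmod 5$. So any rational root is $\equiv 4\pmod 5$, which restricts the candidates $\pm a/b$ to a short list. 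I would then evaluate these few candidates directly, or alternatively reduce modulo $7$ as well and show there are no common admissible residues. A computer-algebra factorization of $\psi_3$ over $\QQ$ would confirm that it is irreducible or has no linear factor.

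Finally, suppose $6\mid k$ and $P$ is a $k$-torsion point with $x(P)\in\QQ$, interpreted as exact order $k$ as in the commented version. Then $Q=(k/2)P$ has order $2$ and $R=(k/3)P$ has order $3$. Since $x([m]P)=L_m(x(P))$ with $L_m\in\QQ(x)$, both $x(Q)$ and $x(R)$ are rational. This contradicts the previous two steps.
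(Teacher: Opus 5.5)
Your outline matches the paper's: compute $j$, exclude rational roots of $\psi_2$ and $\psi_3$, then pass from $P$ to $[k/2]P$ and $[k/3]P$. Your $\psi_2$ step, via the rational root theorem plus a size bound, is fine.

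The gap is the $\psi_3$ step. You correctly flag it as the crux but never carry it out, and the residue computation you give is wrong. The values of $3X^4+X^2+3X-1$ at $X=0,1,2,3,4$ are $-1,6,57,260,795$, i.e.\ $4,1,2,0,0 \pmod 5$. Indeed, modulo $5$ this is $3(X+1)(X+2)(X^2+2X+4)$. So both classes $3$ and $4$ survive, your claim that any rational root is $\equiv 4 \pmod 5$ is false, and pruning candidates by it would be invalid.

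The modulus you mention but do not use closes the step at once. Modulo $7$ the quartic becomes $3X^4+5X^2+3$ (note $20328=7\cdot 2904$), which takes the values $3,4,1,4$ at $X=0,\pm1,\pm2,\pm3$. A rational root has denominator dividing $3$, so it would reduce to a root in $\FF_7$, which does not exist. The paper instead writes $\psi_3=3(x^2-22ux+132u^2)(x^2+22ux-176u^2)$ and observes that the discriminants $-44u^2$ and $1188u^2$ are not squares. In particular $\psi_3$ is \emph{reducible} over $\QQ$, just without linear factors.

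There are also two slips in your first step. First, $3881196=2^2\cdot 3^6\cdot 11^3$; with your $2^2\cdot 3^3\cdot 11^4$ the quotient would be $-2^{15}\cdot 27/11$, not $-2^{15}$. Second, $E_u$ is exactly the Jim\'enez Urroz model at $g=-6u$, not merely a twist of it.

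On the last step: reading ``$k$-torsion'' as exact order $k$ makes your argument correct. This is also all the paper's argument covers, since its assertion that one of $[k/2]P$, $[k/3]P$ is nontrivial fails when the order of $P$ divides $k/6$. The literal statement is about any $P\neq O$ with $[k]P=O$, which is the reading relevant to Conjecture~\ref{C:Obadas}. It needs one more input for points of order prime to $6$, e.g.\ order $5$ when $k=30$.

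One way to supply it: let $x_0=x(P)\in\QQ$ and $d=x_0^3-264u^2x_0+1694u^3$, which is nonzero since $P$ is not $2$-torsion. Then $P$ corresponds to a rational point of the same order on the quadratic twist of $E_u$ by $d$, which is $E_{ud}$. This is again a CM curve over $\QQ$, so by Olson's classification the order of $P$ lies in $\{2,3,4,6\}$. A suitable multiple of $P$ is then a $2$- or $3$-torsion point of $E_u$ with rational $x$-coordinate, which you have already excluded.
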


\begin{proof}
A direct computation shows that $j(E_u)=-2^{15}$, so $E_u$ has CM by $\mathcal{O}$. Also, the $2$-nd 
division polynomial associated to $E_u$ is 
\[
\psi_2(x)=x^3-264u^2x+1694u^3
        =u^3\Bigl((x/u)^3-264(x/u)+1694\Bigr).
\]
If $\psi_2$ had a rational root $x_0\in\QQ$, then $t:=x_0/u\in\QQ$ would satisfy $t^3-264t+1694=0$.
Reducing modulo $5$ gives $t^3+t+4\in\FF_5[t]$, which has no root in $\FF_5$, so $t^3-264t+1694$ has no
rational root. Hence $\psi_2$ has no rational root, and there is no $2$-torsion point with rational
$x$-coordinate.

Similarly, the $3$-rd division polynomial is
\[
\psi_3(x)=3x^4-1584u^2x^2+20328u^3x-69696u^4
        =3(x^2-22ux+132u^2)(x^2+22ux-176u^2).
\]
The two quadratic factors have discriminants $-44u^2$ and $1188u^2$, neither of which is a square in $\QQ$.
Thus $\psi_3$ has no rational root and there is no $3$-torsion point with rational $x$-coordinate.

Finally, suppose $P\in E_u(\Qbar)$ is a $k$-torsion point with $6\mid k$ and $x(P)\in\QQ$.
Then $[k/2]P$ is a $2$-torsion point and $[k/3]P$ is a $3$-torsion point, where at least one of them is not the point at infinity. Moreover, for each $m\ge 1$ we have
\[
x([m]P)=L_m(x(P)),
\]
where $L_m\in\QQ(x)$ is the $m$-th Latt\`es map attached to $E_u$.
In particular, either $x([k/2]P)$ or $x([k/3]P)$ is rational, contradicting the previous two paragraphs.
\end{proof}

\iffalse
\begin{remark}
In the CM normal form for discriminant $D=-11$ (written in terms of Weierstrass coefficients $g_4,g_6$),
one may equivalently consider
\[
E_g:\ y^2=x^3-\frac{22}{3}g^2x-\frac{847}{108}g^3,
\]
and the change of parameter $g=-6u$ yields the curve $E_u$ above.
\end{remark}
\fi

% \begin{corollary}\label{C:D11_counterexample}
% For $E_0$ as in Proposition~\ref{P:D11_counterexample_curve} and any $k$ with $6\mid k$, the $k$-th Latt\`{e}s map $L_k$ is not arithmetically exceptional, yet $E_0$ has no $k$-torsion point whose $x$-coordinate is rational.  In particular, Odaba\c{s}'s conjecture is false in the case $D=-11$ when $6\mid k$.
% \end{corollary}

% \begin{proof}
% Proposition~\ref{P:D11_6divk_notexc} gives that $L_k$ is not arithmetically exceptional for $6\mid k$, while Proposition~\ref{P:D11_counterexample_curve} gives the absence of $k$-torsion points with rational $x$-coordinate for $E_0$.
% \end{proof}

%%%%%%%%%%%%%%%%%%%%%%%%%%%%%
\section{Examples}\label{S:examples}

In this section, we present explicit examples illustrating the main theorem and the counterexample for $D=-11$. For each group of CM discriminants, we display the explicit Latt\`es map $L_k$, for a suitable small positive integer $k$, as a rational function over $\QQ$, a table showing the permutation behavior of $L_k$ on $\PP^1(\FF_p)$ across selected primes $p$, and a value table of $L_k$ over a specific $\FF_p$ demonstrating the bijection (or non-bijection) behavior on $\PP^1(\FF_p)$. 
%Recall from Proposition~\ref{P:perm} that $L_k$ permutes $\PP^1(\FF_p)$ if and only if $\gcd(A_p(E), k) = 1$.

Table~\ref{tab:summary} records, for each CM discriminant $D$, the torsion structure $\Et$, the necessary condition on $k$ for $L_k$ to be arithmetically exceptional (forced by the rational torsion), and the prime-selection strategy used in the proof. All data below were verified using SageMath~\cite{sagemath}. Supplementary materials are available upon request. 

\iffalse
\begin{table}[h]
\centering
\renewcommand{\arraystretch}{1.25}
\caption{Prime selection strategies by CM discriminant}\label{tab:summary}
\small
\begin{tabular}{|c|l|c|p{5.5cm}|}
\hline
$D$ & $\phantom{oo}\Et$ & Cond.\ on $k$ & \multicolumn{1}{c|}{Prime selection strategy} \\
\hline
$-4, -16$ & $C_2$, $C_2\!\times\!C_2$, $C_4$ & $k$ odd & $p \equiv 3 \pmod{4}$, $p \equiv 1 \pmod{k}$ \\
$-8$ & $C_2$ & $k$ odd & $p \equiv 5 \pmod{8}$, $p \equiv 1 \pmod{k}$ \\
$-7, -28$ & $C_2$ & $k$ odd & $p \equiv 5 \pmod{7}$, $p \equiv -2 \pmod{k}$ \\
$-19, -43, -67, -163$ & trivial & all $k \ge 2$ & Chebotarev (splitting primes) \\
$-12$ & $C_6$ & $\gcd(k,6) = 1$ & $p \equiv 2 \pmod{3}$, $p \equiv 1 \pmod{k}$ \\
$-27$ & trivial or $C_3$ & $\gcd(k,6) \in \{1,2\}$ & Chebotarev (splitting primes) \\
$-3$ & $C_1,C_2,C_3$, $C_6$ & $\gcd(k,6) \in \{1,2\}$ & Chebotarev (splitting primes) \\
\hline
$-11$ & trivial & $3 \nmid k$  \\  & & $2 \nmid k$ and $3 \mid k$& Dirichlet/Chebotarev (Section~\ref{S:11}) \\
\hline
\end{tabular}
\normalsize
\end{table}
\fi

\begin{table}[h]
\centering
\renewcommand{\arraystretch}{1.2}
\caption{Prime selection strategies by CM discriminant}
\label{tab:summary}
\small
\begin{tabular}{c c l l}
\toprule
$D$ & $\Et$ & Condition on $k$ & Prime selection strategy \\
\midrule
$-4,-16$ 
  & $C_2$, $C_2\!\times\!C_2$, $C_4$
  & $k$ odd
  & $p \equiv 3 \pmod{4}$, $p \equiv 1 \pmod{k}$ \\
\hline
$-8$ 
  & $C_2$
  & $k$ odd
  & $p \equiv 5 \pmod{8}$, $p \equiv 1 \pmod{k}$ \\
\hline
$-7,-28$
  & $C_2$
  & $k$ odd
  & $p \equiv 5 \pmod{7}$, $p \equiv -2 \pmod{k}$ \\
\hline
$-19,-43,-67,-163$
  & $C_1$
  & any $k\ge 2$
  & Chebotarev (splitting primes) \\
\hline
$-12$
  & $C_2$, $C_6$
  & $\gcd(k,6)=1$
  & $p \equiv 2 \pmod{3}$, $p \equiv 1 \pmod{k}$ \\
\hline
\multirow{2}{*}{$-27$}
  & \multirow{2}{*}{$C_1$, $C_3$}
  & (i) $\gcd(k,6)=1$
  & (i) $p \equiv 2 \pmod{3}$, $p \equiv 1 \pmod{k}$\\
  &
  &(ii) $\gcd(k,6)=2$
  &(ii) Chebotarev (splitting primes)\\
\hline
\multirow{2}{*}{$-3$}
  & \multirow{2}{*}{$C_1$, $C_2$, $C_3$, $C_6$}
  & (i) $\gcd(k,6)=1$
  & (i) $p \equiv 2 \pmod{3}$, $p \equiv 1 \pmod{k}$\\
  &
  & (ii) $\gcd(k,6)=2$
  & (ii) Chebotarev (splitting primes)\\
\hline
\multirow{2}{*}{$-11$}
  & \multirow{2}{*}{$C_1$}
  & (i) $\rad(k)=3\prod_{\ell_i>3} \ell_i$
  & (i) $p \equiv 1 \pmod{3,\ell_i}$, $p \equiv 2 \pmod{11}$ \\
  &
  & (ii) $3 \nmid k$
  & (ii) Chebotarev (splitting primes) \\

\bottomrule
\end{tabular}
\normalsize
\end{table}

\subsection{Case $D \in \{-4, -16\}$}

Consider $E\colon y^2 = x^3 + x$, which has $j(E) = 1728$, CM by $\ZZ[i]$, and $\Et \simeq C_2$, generated by the rational $2$-torsion point $(0,0)$. The torsion forces $k$ to be odd. The $3$-rd division polynomial $\psi_3(x) = 3x^4 + 6x^2 - 1$ has no rational roots, confirming that $E$ has no rational $3$-torsion $x$-coordinate; thus the hypothesis of Theorem~\ref{T:main} holds for $k = 3$.

The $3$-rd Latt\`es map attached to $E$ is
\[
L_3(x) = \frac{x^9 - 12x^7 + 30x^5 + 36x^3 + 9x}{9x^8 + 36x^6 + 30x^4 - 12x^2 + 1}.
\]
Applying the prime selection strategy in Table~\ref{tab:summary}, we have that $L_3$ permutes $\PP^1(\FF_p)$ for any prime $p\equiv 7 \pmod{12}$. Table~\ref{tab:D4} includes data for both inert and split primes and Table~\ref{tab:vs_D4_7} records the values of $L_3$ on $\PP^1(\FF_7)$.
%; this makes the split-prime behavior explicit as well.

\begin{table}[h]
\centering
\caption{Permutation behavior of $L_3$ on $\PP^1(\FF_p)$ for $E\colon y^2 = x^3 + x$}\label{tab:D4}
\begin{tabular}{|c|c|c|c|c|}
\hline
$p$ & $\left(\frac{-1}{p}\right)$ & $a_p$ & $\gcd(A_p(E),3)$ & $L_3$ permutes $\PP^1(\FF_p)$? \\
\hline
$5$ & $+1$ & $2$ & $1$ & Yes \\
$7$ & $-1$ & $0$ & $1$ & Yes \\
$11$ & $-1$ & $0$ & $3$ & No \\
$13$ & $+1$ & $-6$ & $1$ & Yes \\
$19$ & $-1$ & $0$ & $1$ & Yes \\
$23$ & $-1$ & $0$ & $3$ & No \\
$29$ & $+1$ & $10$ & $1$ & Yes \\
$31$ & $-1$ & $0$ & $1$ & Yes \\
\hline
\end{tabular}
\end{table}

% The primes $p = 11,23$ fail because $3 \mid (p+1)^2$, i.e.\ $p \equiv 2 \pmod{3}$. By choosing $p \equiv 7 \pmod{12}$ (equivalently $p \equiv 3 \pmod{4}$ and $p \equiv 1 \pmod{3}$), one gets $\gcd((p+1)^2,3)=1$.

%Each of the $8$ elements appears exactly once, confirming that $L_3$ is a bijection.

\begin{table}[h]
\centering
\caption{Values of $L_3$ on $\PP^1(\FF_7)$ for $E\colon y^2=x^3+x$}\label{tab:vs_D4_7}
\begin{tabular}{|c|cccccccc|}
\hline
$x$ & $0$ & $1$ & $2$ & $3$ & $4$ & $5$ & $6$ & $\infty$ \\
\hline
$L_3(x)$ & $0$ & $1$ & $4$ & $5$ & $2$ & $3$ & $6$ & $\infty$ \\
\hline
\end{tabular}
\end{table}

\subsection{Case $D = -8$}

Consider
\[
E\colon y^2 = x^3 + x^2 - 3x + 1,
\]
which has $j(E)=8000$, CM by $\ZZ[\sqrt{-2}]$, and $\Et\cong C_2$. As in the previous case, when $k=3$, the inert-prime choice $p\equiv 13\pmod{24}$ applies, and Table~\ref{tab:vs_D8_13} gives the values of $L_3$ over $\FF_{13}$.

\iffalse
\begin{table}[h]
\centering
\caption{Values of $L_3$ on $\PP^1(\FF_{13})$ for $E\colon y^2=x^3+x^2-3x+1$}\label{tab:vs_D8_13}
\begin{tabular}{|c|ccccccccccccc c|}
\hline
$x$ & $0$ & $1$ & $2$ & $3$ & $4$ & $5$ & $6$ & $7$ & $8$ & $9$ & $10$ & $11$ & $12$ & $\infty$ \\
\hline
$L_3(x)$ & $4$ & $1$ & $10$ & $7$ & $11$ & $2$ & $9$ & $8$ & $3$ & $12$ & $5$ & $0$ & $6$ & $\infty$ \\
\hline
\end{tabular}
\end{table}
\fi

\begin{table}[h]
\centering
\caption{Values of $L_3$ on $\PP^1(\FF_{13})$ for $E\colon y^2=x^3+x^2-3x+1$}\label{tab:vs_D8_13}
\begin{tabular}{|c|ccccccccccccc c|}
\hline
$x$ & $0$ & $1$ & $2$ & $3$ & $4$ & $5$ & $6$ & $7$ & $8$ & $9$ & $10$ & $11$ & $12$ & $\infty$ \\
\hline
$L_3(x)$ & $4$ & $1$ & $10$ & $7$ & $11$ & $2$ & $9$ & $8$ & $3$ & $12$ & $5$ & $0$ & $6$ & $\infty$ \\
\hline
\end{tabular}
\end{table}

\subsection{Case $D \in \{-7, -28\}$}

Consider $E\colon y^2 = x^3 - 35x + 98$, which has $j(E)=-3375$ and CM by $\mathcal{O}_K$, where $K = \QQ(\sqrt{-7})$. Since $\Et \simeq C_2$, $k$ must be odd. The $3$-rd division polynomial $\psi_3$ has no rational roots, confirming no rational $3$-torsion $x$-coordinate.
\iffalse
The $3$-rd Latt\`es map is
\begin{equation*}
L_3(x) = \frac{N(x)}{D(x)},
\end{equation*}
where
\begin{align*}
N(x) &= x^9 + 420x^7 - 9408x^6 + 36750x^5 + 82320x^4 \\
     &\quad - 1082508x^3 + 5762400x^2 - 18763815x + 26622288, \\
D(x) &= 9x^8 - 1260x^6 + 7056x^5 + 36750x^4 \\
     &\quad - 493920x^3 + 1897476x^2 - 2881200x + 1500625.
\end{align*}
\fi
The inert primes are those with $\left(\frac{-7}{p}\right)=-1$
(equivalently $p \equiv 3,5,6 \pmod 7$). Hence we may choose, for example, $p\equiv 19 \pmod{21}$ to make $L_3$ a permutation on $\PP^1(\FF_p)$. Table~\ref{tab:D7} lists  selected primes $p$ and permutation behavior of  $L_3$ on $\PP^1(\FF_p)$.

\begin{table}[h]
\centering
\caption{Permutation behavior of $L_3$ on $\PP^1(\FF_p)$ for $E\colon y^2=x^3-35x+98$}\label{tab:D7}
\begin{tabular}{|c|c|c|c|c|}
\hline
$p$ & $\left(\frac{-7}{p}\right)$ & $a_p$  & $\gcd(A_p(E), 3)$ & $L_3$ permutes $\PP^1(\FF_p)$? \\
\hline
$5$ & $-1$ & $0$ & $3$ & No \\
$11$ & $+1$ & $-4$  & $1$ & Yes \\
$13$ & $-1$ & $0$  & $1$ & Yes \\
$17$ & $-1$ & $0$  & $3$ & No \\
$19$ & $-1$ & $0$  & $1$ & Yes \\
$23$ & $+1$ & $-8$  & $1$ & Yes \\
$29$ & $+1$ & $2$  & $1$ & Yes \\
$31$ & $-1$ & $0$ & $1$ & Yes \\
\hline
\end{tabular}
\end{table}

\iffalse
\begin{table}[h]
\centering
\caption{Permutation behavior of $L_3$ on $\PP^1(\FF_p)$ for $E\colon y^2=x^3-35x+98$}\label{tab:D7}
\begin{tabular}{|c|c|c|c|c|c|}
\hline
$p$ & $\left(\frac{-7}{p}\right)$ & $a_p$ & $(p+1)^2 - a_p^2$ & $\gcd(\cdot, 3)$ & $L_3$ permutes $\PP^1(\FF_p)$? \\
\hline
$5$ & $-1$ & $0$ & $36$ & $3$ & No \\
$11$ & $+1$ & $-4$ & $128$ & $1$ & Yes \\
$13$ & $-1$ & $0$ & $196$ & $1$ & Yes \\
$17$ & $-1$ & $0$ & $324$ & $3$ & No \\
$19$ & $-1$ & $0$ & $400$ & $1$ & Yes \\
$23$ & $+1$ & $-8$ & $512$ & $1$ & Yes \\
$29$ & $+1$ & $2$ & $896$ & $1$ & Yes \\
$31$ & $-1$ & $0$ & $1024$ & $1$ & Yes \\
\hline
\end{tabular}
\end{table}
\fi

\subsection{Case $D \in \{-19, -43, -67, -163\}$}

Consider $E\colon y^2 + y = x^3 - 38x + 90$, which has $j(E) = -96^3$ and CM by $\mathcal{O}_K$, where $K = \QQ(\sqrt{-19})$. Here $\Et$ is trivial; Theorem~\ref{T:main} therefore applies for all $k \ge 2$. For $k=3$, which is an inert prime in $K$, the construction in Section~\ref{S:proof} produces infinitely many splitting primes $\pi\in\mathcal{O}_K$ satisfying the required congruence conditions modulo $3$.
Table~\ref{tab:D19} illustrates the permutation behavior of $L_3$ on $\PP^1(\FF_p)$ for selected primes $p$.

\begin{table}[h]
\centering
\caption{Permutation behavior of $L_3$ on $\PP^1(\FF_p)$ for $E\colon y^2+y=x^3-38x+90$}\label{tab:D19}
\begin{tabular}{|c|c|c|c|c|}
\hline
$p$ & $\left(\frac{-19}{p}\right)$ & $a_p$ & $\gcd(A_p(E),3)$ & $L_3$ permutes $\PP^1(\FF_p)$? \\
\hline
$5$ & $+1$ & $-1$ & $1$ & Yes \\
$7$ & $+1$ & $3$ & $1$ & Yes \\
$11$ & $+1$ & $-5$ & $1$ & Yes \\
$13$ & $-1$ & $0$ & $1$ & Yes \\
$17$ & $+1$ & $-7$ & $1$ & Yes \\
$23$ & $+1$ & $-4$ & $1$ & Yes \\
$29$ & $-1$ & $0$ & $3$ & No \\
$41$ & $-1$ & $0$ & $3$ & No \\
\hline
\end{tabular}
\end{table}

The failures at $p=29$ and $p=41$ correspond to inert primes $p \equiv 2 \pmod{3}$, where $3 \mid (p+1)^2$. The Chebotarev argument in Section~\ref{S:proof} avoids this by choosing splitting primes $\pi \not\equiv \pm 1\pmod{3}$. Table~\ref{tab:vs_D19_5} records the values of $L_3$ on $\PP^1(\FF_5)$, the smallest prime with a factor in $\mathcal{O}_K$ satisfying such conditions.

\begin{table}[h]
\centering
\caption{Values of $L_3$ on $\PP^1(\FF_5)$ for $E\colon y^2+y=x^3-38x+90$}\label{tab:vs_D19_5}
\begin{tabular}{|c|cccccc|}
\hline
$x$ & $0$ & $1$ & $2$ & $3$ & $4$ & $\infty$ \\
\hline
$L_3(x)$ & $2$ & $3$ & $4$ & $1$ & $0$ & $\infty$ \\
\hline
\end{tabular}
\end{table}

\iffalse
An analogous analysis applies to the other three discriminants $D \in \{-43,-67,-163\}$. For instance, with $D = -43$ and $E\colon y^2+y = x^3 - 860x + 9707$ (having $j = -960^3$), the curve $E$ has trivial rational torsion, and Table~\ref{tab:D43} verifies the permutation criterion for $L_5$.

\begin{table}[h]
\centering
\caption{Permutation criterion for $L_5$, $E\colon y^2+y=x^3-860x+9707$ (CM, $D=-43$)}\label{tab:D43}
\begin{tabular}{|c|c|c|c|c|c|}
\hline
$p$ & $\left(\frac{-43}{p}\right)$ & $a_p$ & $(p+1)^2 - a_p^2$ & $\gcd(\cdot, 5)$ & $L_5$ permutes $\PP^1(\FF_p)$? \\
\hline
$5$ & $-1$ & $0$ & $36$ & $1$ & Yes \\
$7$ & $-1$ & $0$ & $64$ & $1$ & Yes \\
$11$ & $+1$ & $-1$ & $143$ & $1$ & Yes \\
$13$ & $+1$ & $3$ & $187$ & $1$ & Yes \\
$17$ & $+1$ & $5$ & $299$ & $1$ & Yes \\
$19$ & $-1$ & $0$ & $400$ & $5$ & No \\
$23$ & $+1$ & $7$ & $527$ & $1$ & Yes \\
$29$ & $-1$ & $0$ & $900$ & $5$ & No \\
\hline
\end{tabular}
\end{table}
\fi

\subsection{Case $D = -12$}

Consider $E\colon y^2 = x^3 - 15x + 22$, which has $j(E) = 54000$ and CM by $\ZZ + 2\mathcal{O}_K$, where $K = \QQ(\sqrt{-3})$. The torsion subgroup is $\Et \simeq C_6$, generated by the points of order $6$: $(-1, \pm 6)$. The condition $(k,6)=1$ is therefore necessary, as established in Section~\ref{S:proof}.

For $k = 5$, the Latt\`es map $L_5$ has degree $25$ in the numerator. In this case, we may choose primes $p\equiv 11 \pmod{15}$, so that $p$ is inert in $K$ and $\gcd(A_p(E),5)=\gcd((p+1)^2,5)=1.$ Notably, as illustrated in Table~\ref{tab:vs_D12_11}, $L_5$ becomes the identity map on $\PP^1(\FF_{11})$.

\iffalse
Since any prime $p \equiv 2 \pmod{3}$ is inert in $\QQ(\sqrt{-3})$, we have $a_p = 0$ and $(p+1)^2 - a_p^2 = (p+1)^2$. Table~\ref{tab:D12} records the permutation criterion at inert primes. 

\begin{table}[h]
\centering
\caption{Permutation criterion for $L_5$, $E\colon y^2=x^3-15x+22$ (CM, $D=-12$)}\label{tab:D12}
\begin{tabular}{|c|c|c|c|c|c|}
\hline
$p$ & $\left(\frac{-3}{p}\right)$ & $a_p$ & $(p+1)^2 - a_p^2$ & $\gcd(\cdot, 5)$ & $L_5$ permutes $\PP^1(\FF_p)$? \\
\hline
$5$ & $-1$ & $0$ & $36$ & $1$ & Yes \\
$11$ & $-1$ & $0$ & $144$ & $1$ & Yes \\
$17$ & $-1$ & $0$ & $324$ & $1$ & Yes \\
$23$ & $-1$ & $0$ & $576$ & $1$ & Yes \\
$29$ & $-1$ & $0$ & $900$ & $5$ & No \\
$41$ & $-1$ & $0$ & $1764$ & $1$ & Yes \\
$47$ & $-1$ & $0$ & $2304$ & $1$ & Yes \\
\hline
\end{tabular}
\end{table}

The failure at $p = 29$ arises because $29 \equiv 4 \pmod{5}$, so $5 \mid (29+1)^2$; choosing $p \equiv 11 \pmod{15}$ (i.e., $p \equiv 2\pmod 3$ and $p \equiv 1 \pmod 5$) guarantees $\gcd((p+1)^2, 5)=1$. Table~\ref{tab:vs_D12_11} records the values of $L_5$ on $\PP^1(\FF_{11})$.
\fi

\begin{table}[h]
\centering
\caption{Values of $L_5$ on $\PP^1(\FF_{11})$ for $E\colon y^2=x^3-15x+22$}\label{tab:vs_D12_11}
\begin{tabular}{|c|ccccccccccc c|}
\hline
$x$ & $0$ & $1$ & $2$ & $3$ & $4$ & $5$ & $6$ & $7$ & $8$ & $9$ & $10$ & $\infty$ \\
\hline
$L_5(x)$ & $0$ & $1$ & $2$ & $3$ & $4$ & $5$ & $6$ & $7$ & $8$ & $9$ & $10$ & $\infty$ \\
\hline
\end{tabular}
\end{table}

\subsection{Case $D = -27$}

Consider $E\colon y^2 = x^3 - 120x + 506$, which has $j(E) = -12288000 = -2^{15}\cdot 3 \cdot 5^3$ and CM by $\ZZ + 3\mathcal{O}_K$, where $K = \QQ(\sqrt{-3})$. The rational torsion is trivial (since $\psi_2$ has no rational roots), but $x=6$ is the $x$-coordinate of a $3$-torsion point on $E$. This forces $\gcd(k,6) \in \{1,2\}$ for $L_k$ to be arithmetically exceptional.

For $\gcd(k,6)=1$, we illustrate with $k=5$. To obtain a permutation on $\PP^1(\FF_p)$, we again choose primes $p\equiv 11 \pmod{15}$.
%and Table~\ref{tab:D27_k5} shows the criterion.

\iffalse
\begin{table}[h]
\centering
\caption{Case $\gcd(k,6)=1$: permutation criterion for $L_5$, $E\colon y^2=x^3-120x+506$}\label{tab:D27_k5}
\begin{tabular}{|c|c|c|c|c|}
\hline
$p$ & $\left(\frac{-3}{p}\right)$ & $a_p$ & $\gcd(A_p(E),5)$ & $L_5$ permutes $\PP^1(\FF_p)$? \\
\hline
$5$ & $-1$ & $0$ & $1$ & Yes \\
$11$ & $-1$ & $0$ & $1$ & Yes \\
$17$ & $-1$ & $0$ & $1$ & Yes \\
$23$ & $-1$ & $0$ & $1$ & Yes \\
$29$ & $-1$ & $0$ & $5$ & No \\
$41$ & $-1$ & $0$ & $1$ & Yes \\
$47$ & $-1$ & $0$ & $1$ & Yes \\
\hline
\end{tabular}
\end{table}
\fi

For $\gcd(k,6)=2$, we take $k=2$ as requested. Then we may choose split primes $p=N(\pi)$ such that $\pi\equiv \omega \pmod 2$ and $\pi \equiv 1 \pmod 3$ to ensure that $\gcd(A_E(p),2)=1$. The smallest such prime is $p=7$. Table~\ref{tab:D27_k2} shows permutation behavior of $L_2$ on $\PP^1(\FF_p)$, where $p$ are first few split primes.

\begin{table}[h]
\centering
\caption{Permutation behavior of $L_2$ on $\PP^1(\FF_p)$ for $E\colon y^2=x^3-120x+506$}\label{tab:D27_k2}
\begin{tabular}{|c|c|c|c|c|}
\hline
$p$ & $\left(\frac{-3}{p}\right)$ & $a_p$ & $\gcd(A_p(E),2)$ & $L_2$ permutes $\PP^1(\FF_p)$? \\
\hline
$7$ & $+1$ & $-1$ & $1$ & Yes \\
$13$ & $+1$ & $-5$ & $1$ & Yes \\
$19$ & $+1$ & $7$ & $1$ & Yes \\
$31$ & $+1$ & $-4$ & $2$ & No \\
$37$ & $+1$ & $-11$ & $1$ & Yes \\
$43$ & $+1$ & $-8$ & $2$ & No \\
$61$ & $+1$ & $1$ & $1$ & Yes \\
\hline
\end{tabular}
\end{table}

\subsection{Case $D = -3$}

Consider $E\colon y^2 = x^3 + 2$, which has $j(E) = 0$ and CM by $\mathcal{O}_K$, where $K = \QQ(\sqrt{-3})$. Since $d=2$ is neither a perfect square nor a perfect cube, the classification~\eqref{E:torEd} gives $\Et$ trivial. The $3$-rd division polynomial, however, has $x = 0$ and $x = -2$ as rational roots, so the condition $\gcd(k,6) \in \{1,2\}$ is necessary for $L_k$ to be arithmetically exceptional. On the other hand, since $E$ has no rational $2$-torsion point, Theorem~\ref{T:main} implies that the $2$-nd Latt\`es map
\[
L_2(x) = \frac{x^4 - 16x}{4(x^3 + 2)}.
\]
is indeed arithmetically exceptional.
Table~\ref{tab:D3} shows the permutation behavior of $L_2$ on $\PP^1(\FF_p)$. All inert primes  $p \equiv 2 \pmod{3}$ fail, since $A_p(E) = (p+1)^2$ is even. Among split primes, $L_2$ permutes $\PP^1(\FF_p)$ exactly when $a_p$ is odd, i.e.\ when the corresponding $\pi \in \mathcal{O}_K$ satisfies $\pi \not\equiv  1 \pmod{2}$. We may again choose $p=N(\pi)$, where $\pi \equiv \omega \pmod 2$ and $\pi \equiv 1 \pmod 3$; Chebotarev density theorem guarantees infinitely many such primes.

\begin{table}[h]
\centering
\caption{Permutation behavior of $L_2$ on $\PP^1(\FF_p)$ for $E\colon y^2=x^3+2$ }\label{tab:D3}
\begin{tabular}{|c|c|c|c|c|}
\hline
$p$ & $\left(\frac{-3}{p}\right)$ & $a_p$ & $\gcd(A_p(E),2)$ & $L_2$ permutes $\PP^1(\FF_p)$? \\
\hline
$5$ & $-1$ & $0$ & $2$ & No \\
$7$ & $+1$ & $-1$ & $1$ & Yes \\
$11$ & $-1$ & $0$ & $2$ & No \\
$13$ & $+1$ & $-5$ & $1$ & Yes \\
$19$ & $+1$ & $7$ & $1$ & Yes \\
$31$ & $+1$ & $-4$ & $2$ & No \\
$37$ & $+1$ & $-11$ & $1$ & Yes \\
$43$ & $+1$ & $-8$ & $2$ & No \\
\hline
\end{tabular}
\end{table}

The failures at split primes (e.g.\ $p=31, 43$) arise because $a_p \equiv 0 \pmod{2}$: in these cases the corresponding \textit{primary} prime $\pi \in \mathcal{O}_K$ satisfies $\pi \equiv  1 \pmod{2}$.  Table~\ref{tab:vs_D3_7} records the values of $L_2$ on $\PP^1(\FF_7)$, which is the smallest prime with a factor satisfying the above congruences.

\begin{table}[h]
\centering
\caption{Values of $L_2$ on $\PP^1(\FF_7)$ for $E\colon y^2=x^3+2$ }\label{tab:vs_D3_7}
\begin{tabular}{|c|cccccccc|}
\hline
$x$ & $0$ & $1$ & $2$ & $3$ & $4$ & $5$ & $6$ & $\infty$ \\
\hline
$L_2(x)$ & $0$ & $4$ & $1$ & $3$ & $2$ & $5$ & $6$ & $\infty$ \\
\hline
\end{tabular}
\end{table}

\subsection{Counterexample ($D=-11$)}

We now demonstrate that Conjecture~\ref{C:Obadas} fails when $K = \QQ(\sqrt{-11})$ and $6 \mid k$, and show that the positive result for $6 \nmid k$ is sharp.

\begin{example}\label{ex:D11}
Let $E\colon y^2 = x^3 - 264x + 1694$. This curve has $j(E) = -2^{15}$, CM by $\mathcal{O}_K = \ZZ[\theta]$ where $\theta = \frac{1+\sqrt{-11}}{2}$, and trivial rational torsion group. The $2$-nd and $3$-rd division polynomials factor as
\begin{align*}
\psi_2(x) &= 4x^3 - 1056x + 6776 \qquad \text{(irreducible over }\QQ\text{)}, \\
\psi_3(x) &= 3\,(x^2 - 22x + 132)\,(x^2 + 22x - 176),
\end{align*}
where both quadratic factors of $\psi_3(x)$ are irreducible. Hence $E$ has no $k$-torsion point with rational $x$-coordinate for any $k$ divisible by $6$ (see Proposition~\ref{P:D11_counterexample_curve}).

The $2$-nd and $3$-rd Latt\`es maps are 
\begin{equation*}
    L_2(x) = \frac{x^4 + 528x^2 - 13552x + 69696}{4(x^3 - 264x + 1694)}, \quad
L_3(x) = \frac{N_3(x)}{D_3(x)},
\end{equation*}
where $\deg(N_3)=9$ and $\deg(D_3)=8$.
\iffalse
The $2$-nd and $3$-rd Latt\`es maps are
\begin{align*}
L_2(x) &= \frac{x^4 + 528x^2 - 13552x + 69696}{4(x^3 - 264x + 1694)}, \\
L_3(x) &= \frac{N_{11}(x)}{D_{11}(x)},
\intertext{where}
N_{11}(x) &= x^9 + 3168x^7 - 162624x^6 + 2090880x^5 + 10733184x^4 \notag \\
           &\quad - 524648256x^3 + 5667121152x^2 - 29010263040x + 61761125888, \notag \\
D_{11}(x) &= 9x^8 - 9504x^6 + 121968x^5 + 2090880x^4 \notag \\
           &\quad - 64399104x^3 + 634024512x^2 - 2833560576x + 4857532416. \notag
\end{align*}
\fi
The $6$-th Latt\`es map $L_6$ has degree $36$ in the numerator and degree $35$ in the denominator.

Table~\ref{tab:D11} demonstrates Proposition~\ref{P:D11_6divk_notexc}: $L_6$ never permutes $\PP^1(\FF_p)$ for any prime $p \ne 11$ of good reduction since $2 \mid \gcd(A_p(E), 6)$ if $p$ is inert and $3 \mid \gcd(A_p(E), 6)$ if $p$ splits.

\begin{table}[h]
\centering
\caption{Permutation behavior of $L_6$ on $\PP^1(\FF_p)$ for $E:y^2 = x^3 - 264x + 1694$}\label{tab:D11}
\begin{tabular}{|c|c|c|c|c|}
\hline
$p$ & $\left(\frac{-11}{p}\right)$ & $a_p$ & $\gcd(A_p(E),6)$ & $L_6$ permutes $\PP^1(\FF_p)$? \\
\hline
$5$ & $+1$ & $-3$ & $3$ & No \\
$7$ & $-1$ & $0$ & $2$ & No \\
$13$ & $-1$ & $0$ & $2$ & No \\
$17$ & $-1$ & $0$ & $6$ & No \\
$23$ & $+1$ & $-9$ & $3$ & No \\
$31$ & $+1$ & $5$ & $3$ & No \\
$41$ & $-1$ & $0$ & $6$ & No \\
$47$ & $+1$ & $-12$ & $6$ & No \\
\hline
\end{tabular}
\end{table}

Tables~\ref{tab:vs_D11_5} and~\ref{tab:vs_D11_7} record the values of $L_6$ on $\PP^1(\FF_5)$ and $\PP^1(\FF_7)$. The distinct collapse patterns reflect the two different sources of obstruction.
The six points of $\PP^1(\FF_5)$ collapse to two image values, each with fibre of size $3$.  This reflects the non-injectivity of $L_3$ over the split prime $p=5$, while $L_2$ is itself a bijection on $\PP^1(\FF_5)$. On the other hand, over $\FF_7$, the failure occurs in pairs, namely, $L_6(1)=L_6(2)=3$, $L_6(3)=L_6(4)=0$, and $L_6(5)=L_6(6)=4$. This reflects the non-injectivity of $L_2$ over the inert prime $p=7$, while $L_3$ is itself a bijection on $\PP^1(\FF_7)$. Since $L_6 = L_3 \circ L_2 = L_2 \circ L_3$, the non-injectivity propagates from whichever factor fails.

\begin{table}[h]
\centering
\caption{Values of $L_6$ on $\PP^1(\FF_5)$ for $E\colon y^2=x^3-264x+1694$}\label{tab:vs_D11_5}
\begin{tabular}{|c|cccccc|}
\hline
$x$ & $0$ & $1$ & $2$ & $3$ & $4$ & $\infty$ \\
\hline
$L_6(x)$ & $3$ & $3$ & $3$ & $\infty$ & $\infty$ & $\infty$ \\
\hline
\end{tabular}
\end{table}

\begin{table}[h]
\centering
\caption{Values of $L_6$ on $\PP^1(\FF_7)$ for $E\colon y^2=x^3-264x+1694$}\label{tab:vs_D11_7}
\begin{tabular}{|c|cccccccc|}
\hline
$x$ & $0$ & $1$ & $2$ & $3$ & $4$ & $5$ & $6$ & $\infty$ \\
\hline
$L_6(x)$ & $\infty$ & $3$ & $3$ & $0$ & $0$ & $4$ & $4$ & $\infty$ \\
\hline
\end{tabular}
\end{table}

In contrast, Proposition~\ref{P:D11_6notdivk_exc} asserts that $L_k$ \emph{is} arithmetically exceptional whenever $6\nmid k$. Table~\ref{tab:D11_L7} demonstrates this for $k=7$: there exist infinitely many primes at which $L_7$ is a bijection. For example, we can choose $p=N(\pi)$, where $\pi \equiv 3 \pmod{11}$ and $\pi \equiv 2 \pmod{7}$.

\begin{table}[h]
\centering
\caption{Permutation behavior of $L_7$ on $\PP^1(\FF_p)$ for $E\colon y^2=x^3-264x+1694$}\label{tab:D11_L7}
\begin{tabular}{|c|c|c|c|c|}
\hline
$p$ & $\left(\frac{-11}{p}\right)$ & $a_p$ & $\gcd(A_p(E),7)$ & $L_7$ permutes $\PP^1(\FF_p)$? \\
\hline
$5$ & $+1$ & $-3$ & $1$ & Yes \\
$7$ & $-1$ & $0$ & $1$ & Yes \\
$13$ & $-1$ & $0$ & $7$ & No \\
$17$ & $-1$ & $0$ & $1$ & Yes \\
$23$ & $+1$ & $-9$ & $1$ & Yes \\
$31$ & $+1$ & $5$ & $1$ & Yes \\
\hline
\end{tabular}
\end{table}

Table~\ref{tab:vs_D11_7_L7} records the values of $L_7$ on $\PP^1(\FF_5)$, the smallest permutation prime.

\begin{table}[h]
\centering
\caption{Values of $L_7$ on $\PP^1(\FF_5)$ for $E\colon y^2=x^3-264x+1694$}\label{tab:vs_D11_7_L7}
\begin{tabular}{|c|cccccc|}
\hline
$x$ & $0$ & $1$ & $2$ & $3$ & $4$ & $\infty$ \\
\hline
$L_7(x)$ & $1$ & $2$ & $0$ & $3$ & $4$ & $\infty$ \\
\hline
\end{tabular}
\end{table}

\end{example}

\section{An outlook for further exploration}\label{S:outlook}
Theorem~\ref{T:main} verifies Odaba\c{s}'s conjectural criterion for all CM elliptic curves over $\QQ$
except for the unique CM field $\QQ(\sqrt{-11})$, where Proposition~\ref{P:D11_6divk_notexc} shows a genuinely new obstruction when $6\mid k$.
This naturally suggests several directions in which the present work can be extended or refined.
We single out, in particular, a uniform treatment of the case $k=2$ (which does not require CM),
and then outline broader problems motivated by a Galois-representation perspective. The proof of the following proposition stems from Elkies' answer in a MathOverflow discussion; see also an alternative approach using Galois representations due to Bellaïche \cite{BE2014}. In this section, we shall denote the group of $n$-torsion points on $E/K$ by $E(K)[n]$.

\begin{proposition}\label{P:k2_general}
Let $E/\QQ$ be an elliptic curve. Then the second Latt\`es map $L_2$ attached to $E$ is arithmetically exceptional if and only if $E$ has no nontrivial rational $2$-torsion point.
\end{proposition}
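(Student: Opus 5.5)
We plan to prove Proposition~\ref{P:k2_general} through Proposition~\ref{P:perm} with $k=2$. For $p$ of good reduction, $L_2$ permutes $\PP^1(\FF_p)$ if and only if $A_p(E)=(p+1-a_p)(p+1+a_p)$ is odd. For odd $p$ this is equivalent to $a_p$ being odd. Indeed, $p+1$ is even, so both factors have the parity of $a_p$. Since $|E(\FF_p)|=p+1-a_p$, the condition is equivalent to $|E(\FF_p)|$ being odd, i.e.\ $E(\FF_p)[2]=0$.

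The forward direction is the general argument at the start of Section~\ref{S:proof}. Alternatively, a rational $2$-torsion point reduces to a nontrivial $2$-torsion point in $E(\FF_p)$ for all odd good $p$, so $|E(\FF_p)|$ is even for almost all $p$.

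For the converse, write $E: y^2=f(x)$ with $f\in\QQ[x]$ cubic. The hypothesis says $f$ has no rational root, so $f$ is irreducible over $\QQ$. Let $F$ be the splitting field of $f$, with Galois group $G\subseteq S_3$ of order $3$ or $6$. For an odd prime $p$ of good reduction unramified in $F$, the point group $E(\FF_p)[2]$ is trivial exactly when $\bar f$ has no root in $\FF_p$. By Dedekind's theorem, this happens exactly when the Frobenius conjugacy class $\mathrm{Frob}_p\in G$ acts on the three roots without fixed points, i.e.\ is a $3$-cycle. Since $f$ is irreducible, $G$ is transitive and so contains a $3$-cycle. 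The Chebotarev density theorem then gives a set of primes $p$ with $\mathrm{Frob}_p$ a $3$-cycle of positive density ($2/3$ if $G=A_3$, $1/3$ if $G=S_3$). For all but finitely many of these, $A_p(E)$ is odd, and $L_2$ permutes $\PP^1(\FF_p)$.

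The only step needing care is the dictionary between Frobenius cycle type and roots of $\bar f$ modulo $p$. It is standard once we exclude the finitely many primes dividing the discriminant of $f$ and its leading coefficient, which are exactly the bad and ramified primes, together with $p=2$. The remaining input is transitivity of $G$, which forces a fixed-point-free element. This uses only Chebotarev for the cubic field, so no CM is needed.
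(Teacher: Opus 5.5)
Your proof is correct and follows essentially the same route as the paper. Both use Proposition~\ref{P:perm} to reduce to the parity of $|E(\FF_p)|$, then to whether the cubic has a root modulo $p$, and then apply Chebotarev to the splitting field of the irreducible cubic to get infinitely many primes with $\mathrm{Frob}_p$ a $3$-cycle; the only difference is that the paper argues by contradiction, whereas you argue directly and also record the density that the paper states in the subsequent remark.
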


\begin{proof}
We first write $E: y^2=P(x)$, where $P(x)\in \ZZ[x]$ is a cubic polynomial and let $E(K)[n]$ denote the set of $n$-torsion points on $E/K$, including the point $\mathcal{O}$ at infinity.  As mentioned in Section~\ref{S:proof}, the forward direction is obvious. To prove the converse, assume that $L_2$ is not arithmetically exceptional. By Proposition~\ref{P:perm}, we have that 
$a_p\equiv 0 \pmod 2$ for all but finitely many primes $p$. Since $a_p=p+1-|E(\FF_p)|$ for every good prime $p$, it follows that $|E(\FF_p)|\equiv 0 \pmod 2$ for all but finitely many $p$. If $E(\FF_p)[2]=\{\mathcal{O}\}$, then every point on $E/\FF_p$ must be a torsion point of odd order, implying $|E(\FF_p)|\equiv 1 \pmod 2$. Hence we can conclude that $P(x)$ has a root in $\FF_p$ for all but finitely many $p$. Now assume to the contrary that $P(x)$ is irreducible over $\QQ$ and let $L$ be the splitting field of $P(x)$. Then $\Gal(L/\QQ)$ is either $S_3$ or $C_3$; i.e., it contains a $3$-cycle. For any prime $p$ which is unramified in $L$ and a prime ideal $\mathfrak{p}$ above $p$, the Frobenius element $\mathrm{Frob}_\mathfrak{p}\in \Gal(L/\QQ)$ acts on the three roots of $P(x)$ and its cycle type determines the factorization of $P(x)$ over $\FF_p$.
In particular, $\mathrm{Frob}_\mathfrak{p}$ is a $3$-cycle if and only if $P(x)$ is irreducible over $\FF_p$. By the Chebotarev density theorem applied to $L/\QQ$, there are infinitely many primes $p$ of good reduction for which $\mathrm{Frob}_\mathfrak{p}$ is a $3$-cycle, hence for which $P(x)\bmod p$ is irreducible, a contradiction. Therefore, $P(x)$ must have a rational root; i.e., $E$ has a nontrivial rational $2$-torsion point.
\end{proof}

\begin{remark}
Since the nontrivial rational $2$-torsion points of $E/\QQ$ are exactly those with rational $x$-coordinate, Proposition~\ref{P:k2_general} confirms Odaba\c{s}'s conjecture for $k=2$.
Moreover, Chebotarev gives a density statement: if $\Gal(L/\QQ)\simeq S_3$, then the primes $p$ for which $L_2$ permutes $\PP^1(\FF_p)$ have natural density $1/3$; if $\Gal(L/\QQ)\simeq C_3$, the density is $2/3$.
\end{remark}

To investigate the problem further for $k>2$, let us first point out additional counterexamples to Conjecture~\ref{C:Obadas} for \textit{non-CM} elliptic curves, which are of a similar nature to those discussed in Section~\ref{S:11}.
\begin{proposition}\label{P:counter_noncm}
    For any nonzero integer $u$, consider the elliptic curve 
\[E_u: y^2=x^3-9u^2x+12u^3.\]
For any positive integer $k$ such that $6\mid k$, we have the following.
\begin{itemize}
    \item [(i)] For any prime $p$ where $E_u$ has good reduction, $(A_p(E_u),k)\ne 1$; i.e., the $k$-th Latt\'{e}s map attached to $E_u$ is not arithmetically exceptional.
    \item [(ii)]  If $P\in E_u(\overline{\QQ})[k]$, then $x(P)\not\in \QQ$.
    
\end{itemize}
\end{proposition}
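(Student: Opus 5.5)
The plan for (i) is to compare two Galois actions: Frobenius acting on the roots of the $2$-division polynomial and on the roots of the $3$-division polynomial. Write $A_p(E_u)=(p+1-a_p)(p+1+a_p)=|E_u(\FF_p)|\,|E_u'(\FF_p)|$, where $E_u'$ is the quadratic twist by a non-residue, as in \eqref{E:Np}. The discriminant of $E_u$ is divisible by $2$ and $3$, so we may assume $p\ge 5$.

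\textbf{Step 1 (the prime $2$).} As in the proof of Proposition~\ref{P:k2_general}, $|E_u(\FF_p)|$ and $|E_u'(\FF_p)|$ are odd exactly when $f(x)=x^3-9u^2x+12u^3$ has no root in $\FF_p$. This is the same condition for $E_u$ and $E_u'$, because the twist has $2$-division cubic $d^3f(x/d)$. So if $2\nmid A_p(E_u)$, then $f$ is irreducible mod $p$, and $\mathrm{Frob}_p$ acts as a $3$-cycle on the roots of $f$.

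\textbf{Step 2 (the prime $3$).} Next, $3\mid A_p(E_u)$ if and only if $\mathrm{Frob}_p$, acting on $E_u[3]$, has an eigenvalue $\pm1$. Indeed, $3\mid|E_u(\FF_p)|$ means eigenvalue $1$, and $3\mid|E_u'(\FF_p)|$ means eigenvalue $-1$. This is equivalent to $\mathrm{Frob}_p$ fixing one of the four lines of $E_u[3]$, that is, to $\psi_3(x)=3x^4-54u^2x^2+144u^3x-81u^4$ having a root in $\FF_p$. (The four roots of $\psi_3$ are the $x$-coordinates of the four pairs $\pm P$.) So it suffices to show: whenever $\mathrm{Frob}_p$ is a $3$-cycle on the roots of $f$, it fixes a root of $\psi_3$.

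\textbf{Step 3 (main obstacle).} I expect the key computation to be showing that the resolvent cubic of the quartic $\psi_3(x)/3$ has the same splitting field as $f$; ideally it equals $f$ after a linear change of variable. This is natural, since $\mathrm{PGL}_2(\FF_3)\simeq S_4$ acts on the four lines, and $S_4/V_4\simeq S_3$. After scaling $x\mapsto ux$ this is a finite check on $x^4-18x^2+48x-27$ versus $x^3-9x+12$. I would compute the resolvent $\prod(y-(r_ir_j+r_kr_l))$ explicitly and match it, or else compare the fields directly; both have discriminant $-3$ times a square.

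Granting this, let $G$ be the Galois group of $\psi_3$, viewed inside $S_4$. The image of $\mathrm{Frob}_p$ in $S_4/V_4\simeq S_3$ is the Frobenius of the resolvent cubic, which by Step 1 is a $3$-cycle. The elements of $S_4$ mapping to $3$-cycles in $S_3$ are exactly the $3$-cycles, and each of these fixes one of the four points. Hence $\psi_3$ has a root mod $p$, so $3\mid A_p(E_u)$. Therefore $\gcd(A_p(E_u),6)\ne1$ for every good $p$, and since $6\mid k$, Proposition~\ref{P:perm} shows that $L_k$ is not arithmetically exceptional.

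\textbf{Part (ii).} I would copy the proof of Proposition~\ref{P:D11_counterexample_curve}. After substituting $t=x/u$:
\begin{itemize}
\item $t^3-9t+12$ has no rational root, by the rational root theorem (or reduction mod a small prime);
\item $t^4-18t^2+48t-27$ has no rational root, by the rational root theorem with divisors of $27$.
\end{itemize}
Then, if $P\in E_u(\Qbar)[k]$ had $x(P)\in\QQ$, either $[k/2]P$ or $[k/3]P$ would be a nontrivial $2$- or $3$-torsion point, and its $x$-coordinate $L_m(x(P))$ would be rational. This contradicts the two facts above.
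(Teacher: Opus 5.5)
Your argument for (i) is correct in outline, and it takes a genuinely different route from the paper's. The paper splits on whether $3$ is a cube mod $p$. If $3\equiv m^3$, it exhibits the $2$-torsion point $(-mu-m^2u,0)$. If not, then $p\equiv 1\pmod 3$, so $\mathrm{disc}(\psi_3)=-2^{12}3^{13}u^{12}$ is a square mod $p$, and Stickelberger's (Dickson's) criterion makes $\psi_3$ reducible. A coefficient comparison then excludes a $2+2$ factorization, because it would force $((a-12v)/(4v))^3=9$.

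Your lifting argument through $S_4\to S_3$ replaces both the discriminant criterion and the $2+2$ exclusion. It also explains the phenomenon conceptually: $\QQ(E_u[2])$ coincides with the splitting field of the resolvent cubic of $\psi_3$.

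However, Step 3 carries all the content and you only grant it. Your preferred version of it is also false. After $x=ut$, the resolvent cubic of $t^4-18t^2+48t-27$ is $y^3+18y^2+108y-360=(y+6)^3-576$. This is not a linear transform of $t^3-9t+12$: both are depressed after shifting, and $z^3-576$ has no linear term. The shared discriminant class ($-3$ times a square) does not suffice either, since it only identifies the quadratic subfield (compare $x^3-2$ with $x^3-3$).

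Your second option does work. Since $576=4^3\cdot 9$, the resolvent splits over $\QQ(\sqrt[3]{3},\omega)$. The roots of $t^3-9t+12$ are $-(m+m^2)$ with $m^3=3$, so the two splitting fields agree. Over $\FF_p$ the argument reads as follows. If $f$ is irreducible mod $p$, then $3$ is not a cube mod $p$. Hence $p\equiv 1\pmod 3$ and $576$ is not a cube, so the resolvent is irreducible mod $p$ and Frobenius acts as a $3$-cycle on the roots of $\psi_3$. The paper's relation $((a-12v)/(4v))^3=9$ is this same resolvent in disguise: it says exactly that $B+C=a-18v$ is a root of the resolvent $(y+6v)^3-576v^3$ of $\psi_3/3$.

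For (ii) there is a genuine gap, inherited from the paper's proof of Proposition~\ref{P:D11_counterexample_curve}. It is not true that one of $[k/2]P$ and $[k/3]P$ must be nontrivial: both vanish whenever the order $n$ of $P$ divides $k/6$. For instance, if $k=30$ and $P$ has order $5$, then $[15]P=[10]P=O$, and the $2$- and $3$-division polynomials say nothing about $P$. The argument is valid only for $P$ of exact order $k$, whereas (ii), and the conjecture, concern all nonzero $k$-torsion points.

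To repair this, replace $P$ by $Q=[n/\ell]P$ for a prime $\ell\mid n$. Then $Q$ has order $\ell$ and $x(Q)=L_{n/\ell}(x(P))\in\QQ$. The cases $\ell=2,3$ are covered by your computation, but $\ell\ge 5$ needs an extra input. Since the $x$-coordinates of torsion points of $E_u$ are $u$ times those of $E_1$, you may take $u=1$. The line $\langle Q\rangle$ is Galois-stable, and Galois acts on it through the quadratic character of $\QQ(y(Q))\subseteq \QQ(E_1[\ell])$. So for every good $p\ne\ell$, Frobenius has an eigenvalue $\pm1$ on $E_1[\ell]$, and hence $\ell\mid A_p(E_1)$. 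A direct count gives $a_5=2$ and $a_7=1$, so $A_5(E_1)=32$ and $A_7(E_1)=63$. These exclude every prime $\ell\ge 5$.
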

\begin{proof} 
    Let $k$ be a positive integer such that $6\mid k$.
    To prove (i), we shall consider the good primes $p$ in two different cases, depending on whether $3$ is a cubic residue modulo $p$. If $3\equiv m^3 \pmod p$ for some $m\in \ZZ$, then it can be checked easily that $(-mu-m^2u,0)$ is a $2$-torsion point on $E_u(\FF_p)$. Hence $|E_u(\FF_p)|$ must be even and, by \eqref{E:Np}, \[(A_p(E_u),k)=(|E_u(\FF_p)||E'_u(\FF_p)|,k)\ge 2.\] Now assume that $3$ is not a cubic residue modulo $p$. Then we have that $p\equiv 1 \pmod 3$. (Otherwise, for $p=3m+2$, we have $(3^{2m+1})^3\equiv 3^{2p-1} \equiv 3 \pmod p$.) Note that the $3$-rd division polynomial associated to $E_u$ is 
    \[\psi_3(x)=3(x^4-18u^2x^2+48u^3x
    -27u^4)=:3f(x),\]
    whose discriminant is $D=-2^{12}3^{13}u^{12}$. Since $\left(\frac{D}{p}\right)=\left(\frac{-3}{p}\right)=1$, it follows that $\psi_3(x)$ is reducible over $\FF_p$ (see, e.g., \cite{Dickson1906}). Suppose $f(x)$ is a product of two quadratic polynomials over $\FF_p$. Then we have that 
    \[f(x)=(x^2-Ax+B)(x^2+Ax+C),\]
    where $A\ne 0$. Expanding the right-hand side and comparing the coefficients yield $B+C=A^2-18u^2,$ $B-C=48u^3/A$, and $BC=-27u^4$. Since $(B+C)^2-(B-C)^2=4BC$, letting $a=A^2$ and $v=u^2$, one sees that $a(a-18v)^2-2304v^3=-108v^2a,$ which is equivalent to 
    \begin{equation}\label{E:av}
        \left(\frac{a-12v}{4v}\right)^3=9.
    \end{equation}
    Then it follows immediately from \eqref{E:av} that 
    \[\left(\frac{(a-12v)^2}{3(4v)^2}\right)^3=3,\]
    which contradicts the assumption on $p$. (Note that the multiplicative inverse of $3(4v)^2$ exists since $p$ does not divide the discriminant of $E_u$, which is $-2^6 3^5 u^6$.) Hence the only possibility is that $f(x)$ factors as the product of a linear and an irreducible cubic polynomial over $\FF_p$. In other words, $f(x)$ must have a unique root in $\FF_p$, say $x_0$. If $x_0^3-9u^2x_0+12u^3$ is a square in $\FF_p$, then $E_u$ has a $3$-torsion point over $\FF_p$. Otherwise, for any non-square $d$, $d(x_0^3-9u^2x_0+12u^3)$ must be a square in $\FF_p$ and the quadratic twist $E'_u:d^{-1}y^2=x^3-9u^2x+12u^3$ has a $3$-torsion point over $\FF_p$. Therefore, we have $(|E_u(\FF_p)||E'_u(\FF_p)|,k)\ge 3.$

    Part (ii) can be proven using arguments similar to those in the proof of Proposition~\ref{P:D11_counterexample_curve}. By the rational root theorem, both $t^3-9t+12$ and $t^4-18t^2+48t-27$ have no rational roots, so $E_u$ has no $2$- or $3$-torsion points with rational $x$-coordinates. Therefore, we can conclude that $E_u$ has no $k$-torsion point whose $x$-coordinate is rational.
\end{proof}
\begin{remark}\label{R:counter10}
    A direct calculation yields $j(E_u)=-5184$, so $E_u$ does not have complex multiplication. We also discover many other non-CM counterexamples which are analogous to the family $E_u$. For example, for a nonzero integer $u$, consider the elliptic curve $F_u:y^2=x^3-60u^2x+180u^3$, which has $j$-invariant $-138240$. One can apply the same arguments as above to show that the statements in Proposition~\ref{P:counter_noncm} hold for $F_u$. In particular, for any prime $p\nmid 10u$, we have that $2\mid A_p(F_u)$ if $10$ is a cubic residue modulo $p$ and $3\mid A_p(F_u)$ otherwise. Notably, cases with $6\mid k$ are absent from Odaba\c{s}'s experimental data. 
\end{remark}

Based on our main results for CM elliptic curves and the counterexamples in Proposition~\ref{P:D11_counterexample_curve}, Proposition~\ref{P:counter_noncm}, and Remark~\ref{R:counter10}, it seems reasonable to reformulate Odaba\c{s}'s conjecture as follows.
\begin{conjecture}\label{C:reform}
    For any positive integer $k$ such that $6\nmid k$, the $k$-th Latt\`es map attached to an elliptic curve $E/\mathbb{Q}$ is arithmetically exceptional if and only if $E$ possesses no $k$-torsion point whose $x$-coordinate is rational.
\end{conjecture}
Although we are not yet able to prove Conjecture~\ref{C:reform} for non-CM elliptic curves, we finish this paper by investigating it via Galois representations.

Let $\ell$ be a prime and consider the mod-$\ell$ Galois representation
\[
\overline\rho_{E,\ell}:\Gal(\Qbar/\QQ)\longrightarrow \Aut(E[\ell])\simeq \GL_2(\FF_\ell).
\]
For a prime $p$ of good reduction with $p\neq \ell$, the element $\overline\rho_{E,\ell}(\mathrm{Frob}_p)$ has
characteristic polynomial congruent to $T^2-a_pT+p$ modulo $\ell$.
Evaluating at $T=\pm 1$ gives the congruence
\begin{equation*}
\det\bigl(I_2\pm \overline\rho_{E,\ell}(\mathrm{Frob}_p)\bigr)
\equiv p+1\pm a_p \pmod{\ell}.
\end{equation*}
Consequently, we have
\begin{equation}\label{E:eigenvalue_condition_rechecked}
\gcd\bigl(A_p(E),\,m\bigr)=1
\ \Longleftrightarrow\
\text{for every }\ell\mid m,\ 
\det\bigl(I_2\pm\overline\rho_{E,\ell}(\mathrm{Frob}_p)\bigr)\not\equiv 0\pmod{\ell}.
\end{equation}

This viewpoint packages the permutation condition into a single Chebotarev-type condition.
Indeed, let
\[
\overline\rho_{E,m}:\Gal(\Qbar/\QQ)\longrightarrow \Aut(E[m])\simeq \GL_2(\ZZ/m\ZZ)
\]
be the combined mod-$m$ representation and set
\begin{align*}
G_m&:=\overline\rho_{E,m}\bigl(\Gal(\Qbar/\QQ)\bigr)\ \simeq\ \Gal\bigl(\QQ(E[m])/\QQ\bigr),\\
C_m&:=\Bigl\{A\in G_m:\ \det(I_2-A)\det(I_2+A)\in (\ZZ/m\ZZ)^\times\Bigr\}.
\end{align*}
For any prime $p\nmid m$ of good reduction, by \eqref{E:eigenvalue_condition_rechecked} and Proposition~\ref{P:perm}, we have
\[
L_m \text{ permutes }\PP^1(\FF_p)
\quad\Longleftrightarrow\quad
\overline\rho_{E,m}(\mathrm{Frob}_p)\in C_m.
\]

Since every element in $G_m$ comes from a Frobenius element, Chebotarev density theorem implies the following simple restatement:
\begin{equation}\label{E:LmCm}
L_m \text{ is arithmetically exceptional}
\quad\Longleftrightarrow\quad
C_m\neq \emptyset.
\end{equation}
Moreover, when $C_m\neq\emptyset$, the set of primes $p$ of good reduction for which $L_m$ permutes $\PP^1(\FF_p)$ has positive natural density and is equal to $|C_m|/|G_m|$.

For CM curves, the image groups $G_m$ admit an explicit description in terms of class field theory and reciprocity,
and the constructions in Sections~\ref{S:proof}--\ref{S:11} can be viewed as producing Frobenius elements landing in $C_m$.
The obstruction arising in the counterexamples presented above when $6\mid k$ may be interpreted as the emptiness of $C_{k}$. On the other hand, if $E/\QQ$ is a non-CM elliptic curve, then we have from Serre's open image theorem \cite{Serre1972} that $\overline\rho_{E,\ell}$ is surjective for all sufficiently large primes $\ell$. Hence if $m=\rad(k)=\prod_{i=1}^r\ell_i$, where $\ell_i$'s are sufficiently large primes, one can deduce using Chinese remainder theorem that $\overline\rho_{E,m}$ is also surjective; i.e., $G_m \simeq \mathrm{GL}_2(\ZZ/m\ZZ)$. For any $a\in (\ZZ/m\ZZ)^\times$ such that $a\not\equiv \pm 1 \pmod {\ell_i}$, let 
\[A=\begin{pmatrix}
    a & 0 \\
    0 & -a^{-1}
\end{pmatrix}.\]
Then it is clear that $A\in C_m$. Hence, by \eqref{E:LmCm}, $L_k$ is arithmetically exceptional, implying that Conjecture~\ref{C:reform} holds for any such $k$. Finally, we pose the following problem, which might be of interest to readers.
\begin{problem}\label{Prob:nonCM_program}
Let $E/\QQ$ be an elliptic curve without CM and let $k\ge 2$, with $m=\rad(k)$.
Give an explicit criterion, in terms of the images of the representations $\rho_{E,m}$ that decides whether $C_m$ is empty.
\end{problem}
The notion of arithmetic exceptionality extends naturally to rational functions over a number field $K$, with $\FF_p$ replaced by the residue field $\FF_\mathfrak{p}$ at a prime ideal $\mathfrak{p}$ of $\mathcal{O}_K$. Therefore, it would also be interesting to investigate these problems for elliptic curves defined over $K$.

\section*{Acknowledgments}
This research project is supported by King Mongkut’s University of Technology Thonburi (KMUTT), Thailand Science Research and Innovation (TSRI), and National Science, Research and Innovation Fund (NSRF) Fiscal year 2025 Grant number  FRB690020/0164.

\bibliographystyle{amsplain}
\bibliography{ref}

@manual{sagemath,
  Key          = {SageMath},
  Author       = {{The Sage Developers}},
  Title        = {{S}ageMath, the {S}age {M}athematics {S}oftware {S}ystem},
  note         = {{\tt https://www.sagemath.org}},
  Year         = {2024},
}

@mastersthesis{Odabas2023,
  author       = {Odaba{\c{s}}, O.},
  title        = {Arithmetically Exceptional {L}att{\`e}s Maps Attached to Elliptic Curves Without Complex Multiplication},
  school       = {Middle East Technical University},
  address      = {Ankara, Turkey},
  year         = {2023},
  month        = jun,
  type         = {M.{S}c. Thesis},
}

@article {Olson1974,
    AUTHOR = {Olson, L. D.},
     TITLE = {Points of finite order on elliptic curves with complex
              multiplication},
   JOURNAL = {Manuscripta Math.},
  FJOURNAL = {Manuscripta Mathematica},
    VOLUME = {14},
      YEAR = {1974},
     PAGES = {195--205},
      ISSN = {0025-2611},
   MRCLASS = {14G25 (10B10 14K15)},
  MRNUMBER = {352104},
MRREVIEWER = {Kuang-yen Shih},
       DOI = {10.1007/BF01171442},
       URL = {https://doi.org/10.1007/BF01171442},
}

@article {BCCHLTZ22,
    AUTHOR = {Bell, Z. and  Camero, J. and Cho, K. and Hyde, T. and Lu, C. M. and Thompson, B. and Zhu, E.},
     TITLE = {Density of periodic points for {L}att\`es maps over finite
              field},
   JOURNAL = {J. Number Theory},
  FJOURNAL = {Journal of Number Theory},
    VOLUME = {238},
      YEAR = {2022},
     PAGES = {951--966},
      ISSN = {0022-314X,1096-1658},
   MRCLASS = {11G07 (37P10)},
  MRNUMBER = {4430126},
MRREVIEWER = {Thomas\ Ward},
       DOI = {10.1016/j.jnt.2021.10.009},
       URL = {https://doi.org/10.1016/j.jnt.2021.10.009},
}

@article {Kucuksakalli2014,
    AUTHOR = {K\"{u}\c{c}\"{u}ksakall\i , \"{O}.},
     TITLE = {Value sets of {L}att\`es maps over finite fields},
   JOURNAL = {J. Number Theory},
  FJOURNAL = {Journal of Number Theory},
    VOLUME = {143},
      YEAR = {2014},
     PAGES = {262--278},
      ISSN = {0022-314X,1096-1658},
   MRCLASS = {11G20},
  MRNUMBER = {3227347},
MRREVIEWER = {Nurdag\"{u}l\ Anbar},
       DOI = {10.1016/j.jnt.2014.04.014},
       URL = {https://doi.org/10.1016/j.jnt.2014.04.014},
}

@incollection {Milnor2006,
    AUTHOR = {Milnor, J.},
     TITLE = {On {L}att\`es maps},
 BOOKTITLE = {Dynamics on the {R}iemann sphere},
    SERIES = {Eur. Math. Soc. Lect. Notes},
 PUBLISHER = {European Mathematical Society, Z\"urich},
      YEAR = {2006},
     PAGES = {9--43},
}

@book {IR1990,
    AUTHOR = {Ireland, K. and Rosen, M.},
     TITLE = {A classical introduction to modern number theory},
    SERIES = {Graduate Texts in Mathematics},
    VOLUME = {84},
   EDITION = {Second},
 PUBLISHER = {Springer-Verlag, New York},
      YEAR = {1990},
     PAGES = {xiv+389},
      ISBN = {0-387-97329-X},
   MRCLASS = {11-01 (11-02)},
  MRNUMBER = {1070716},
MRREVIEWER = {Glenn\ Stevens},
       DOI = {10.1007/978-1-4757-2103-4},
       URL = {https://doi.org/10.1007/978-1-4757-2103-4},
}

@book {Lemmermeyer2000,
    AUTHOR = {Lemmermeyer, F.},
     TITLE = {Reciprocity laws: {F}rom {E}uler to {E}isenstein},
    SERIES = {Springer Monographs in Mathematics},
 PUBLISHER = {Springer-Verlag, Berlin},
      YEAR = {2000},
     PAGES = {xx+487},
      ISBN = {3-540-66957-4},
   MRCLASS = {11A15 (11L05 11R04 11R18)},
  MRNUMBER = {1761696},
MRREVIEWER = {Charles\ Helou},
       DOI = {10.1007/978-3-662-12893-0},
       URL = {https://doi.org/10.1007/978-3-662-12893-0},
}

@article {SS2011,
    AUTHOR = {Silverman, J. H. and Stange, K. E.},
     TITLE = {Amicable pairs and aliquot cycles for elliptic curves},
   JOURNAL = {Exp. Math.},
  FJOURNAL = {Experimental Mathematics},
    VOLUME = {20},
      YEAR = {2011},
    NUMBER = {3},
     PAGES = {329--357},
      ISSN = {1058-6458,1944-950X},
   MRCLASS = {11G05 (11A41 11G20)},
  MRNUMBER = {2836257},
MRREVIEWER = {J.\ D.\ Dixon},
       DOI = {10.1080/10586458.2011.565253},
       URL = {https://doi.org/10.1080/10586458.2011.565253},
}

@article {RS2009,
    AUTHOR = {Rubin, K. and Silverberg, A.},
     TITLE = {Point counting on reductions of {CM} elliptic curves},
   JOURNAL = {J. Number Theory},
  FJOURNAL = {Journal of Number Theory},
    VOLUME = {129},
      YEAR = {2009},
    NUMBER = {12},
     PAGES = {2903--2923},
      ISSN = {0022-314X,1096-1658},
   MRCLASS = {11G15 (11G05 11G07)},
  MRNUMBER = {2560842},
MRREVIEWER = {Jeffrey\ D.\ Achter},
       DOI = {10.1016/j.jnt.2009.01.020},
       URL = {https://doi.org/10.1016/j.jnt.2009.01.020},
}

@article {Schur1923Uber,
 AUTHOR = {Schur, I.},
  TITLE = {Über den Zusammenhang zwischen einem Problem der Zahlentheorie und einem Satz über algebraische Funktionen},
  JOURNAL = {S.-B. Preuss. Akad. Wiss., Phys.-Math. Klasse},
  YEAR = {1923},
  PAGES = {123--134},
}

@article {Fried1970,
    AUTHOR = {Fried, M.},
     TITLE = {On a conjecture of {S}chur},
   JOURNAL = {Michigan Math. J.},
  FJOURNAL = {Michigan Mathematical Journal},
    VOLUME = {17},
      YEAR = {1970},
     PAGES = {41--55},
      ISSN = {0026-2285,1945-2365},
   MRCLASS = {10.76},
  MRNUMBER = {257033},
MRREVIEWER = {A.\ L.\ Whiteman},
       URL = {http://projecteuclid.org/euclid.mmj/1029000374},
}

@article {GMS2003,
    AUTHOR = {Guralnick, R. M. and M\"{u}ller, P. and Saxl, J.},
     TITLE = {The rational function analogue of a question of {S}chur and
              exceptionality of permutation representations},
   JOURNAL = {Mem. Amer. Math. Soc.},
  FJOURNAL = {Memoirs of the American Mathematical Society},
    VOLUME = {162},
      YEAR = {2003},
    NUMBER = {773},
     PAGES = {viii+79},
      ISSN = {0065-9266,1947-6221},
   MRCLASS = {12E30 (11G05 11G15 20B15)},
  MRNUMBER = {1955160},
MRREVIEWER = {Michael\ E.\ Zieve},
       DOI = {10.1090/memo/0773},
       URL = {https://doi.org/10.1090/memo/0773},
}

@book {Silverman2009,
    AUTHOR = {Silverman, J. H.},
     TITLE = {The arithmetic of elliptic curves},
    SERIES = {Graduate Texts in Mathematics},
    VOLUME = {106},
   EDITION = {Second},
 PUBLISHER = {Springer, Dordrecht},
      YEAR = {2009},
     PAGES = {xx+513},
      ISBN = {978-0-387-09493-9},
   MRCLASS = {11-02 (11G05 11G20 14H52 14K15)},
  MRNUMBER = {2514094},
MRREVIEWER = {Vasil\cprime \ \={I}.\ Andr\={\i}\u{\i}chuk},
       DOI = {10.1007/978-0-387-09494-6},
       URL = {https://doi.org/10.1007/978-0-387-09494-6},
}

@inproceedings{JimenezUrrozANTS,
  author    = {Jim{\'e}nez Urroz, J.},
  title     = {Almost Prime Orders of {CM} Elliptic Curves Modulo $p$},
  booktitle = {Algorithmic Number Theory (ANTS--VIII)},
  series    = {Lecture Notes in Computer Science},
  volume    = {5011},
  pages     = {74--87},
  year      = {2008},
  publisher = {Springer},
  address   = {Berlin}
}

@misc{BE2014,
  author       = {Bella\"{\i}che, J. and Elkies, N. D.},
  title        = {Elliptic curves with trace of {F}robenius values always congruent to $0$ modulo $2$},
  howpublished = {MathOverflow},
  year         = {2014},
  note         = {mathoverflow.net/questions/179617},
}

@book {Cohen1993,
    AUTHOR = {Cohen, H.},
     TITLE = {A course in computational algebraic number theory},
    SERIES = {Graduate Texts in Mathematics},
    VOLUME = {138},
 PUBLISHER = {Springer-Verlag, Berlin},
      YEAR = {1993},
     PAGES = {xii+534},
      ISBN = {3-540-55640-0},
   MRCLASS = {11Y40 (11Rxx 68Q40)},
  MRNUMBER = {1228206},
MRREVIEWER = {Joe\ P.\ Buhler},
       DOI = {10.1007/978-3-662-02945-9},
       URL = {https://doi.org/10.1007/978-3-662-02945-9},
}

@article {Dickson1906,
    AUTHOR = {Dickson, L. E.},
     TITLE = {Criteria for the irreducibility of functions in a finite
              field},
   JOURNAL = {Bull. Amer. Math. Soc.},
  FJOURNAL = {Bulletin of the American Mathematical Society},
    VOLUME = {13},
      YEAR = {1906},
    NUMBER = {1},
     PAGES = {1--8},
      ISSN = {0002-9904},
   MRCLASS = {DML},
  MRNUMBER = {1558390},
       DOI = {10.1090/S0002-9904-1906-01403-3},
       URL = {https://doi.org/10.1090/S0002-9904-1906-01403-3},
}

@article {Serre1972,
    AUTHOR = {Serre, J. P.},
     TITLE = {Propri\'{e}t\'{e}s galoisiennes des points d'ordre fini des
              courbes elliptiques},
   JOURNAL = {Invent. Math.},
  FJOURNAL = {Inventiones Mathematicae},
    VOLUME = {15},
      YEAR = {1972},
    NUMBER = {4},
     PAGES = {259--331},
      ISSN = {0020-9910,1432-1297},
   MRCLASS = {14G25 (14K15)},
  MRNUMBER = {387283},
MRREVIEWER = {J.\ W. S. Cassels},
       DOI = {10.1007/BF01405086},
       URL = {https://doi.org/10.1007/BF01405086},
}

@article {Turnwald1995,
    AUTHOR = {Turnwald, G.},
     TITLE = {On {S}chur's conjecture},
   JOURNAL = {J. Austral. Math. Soc. Ser. A},
  FJOURNAL = {Australian Mathematical Society. Journal. Series A. Pure
              Mathematics and Statistics},
    VOLUME = {58},
      YEAR = {1995},
    NUMBER = {3},
     PAGES = {312--357},
      ISSN = {0263-6115},
   MRCLASS = {11T06 (12E05)},
  MRNUMBER = {1329867},
MRREVIEWER = {S.\ D.\ Cohen},
}

@article {Muller1997,
    AUTHOR = {M\"{u}ller, P.},
     TITLE = {A {W}eil-bound free proof of {S}chur's conjecture},
   JOURNAL = {Finite Fields Appl.},
  FJOURNAL = {Finite Fields and their Applications},
    VOLUME = {3},
      YEAR = {1997},
    NUMBER = {1},
     PAGES = {25--32},
      ISSN = {1071-5797,1090-2465},
   MRCLASS = {11R58 (11T06)},
  MRNUMBER = {1429041},
MRREVIEWER = {G.\ Turnwald},
       DOI = {10.1006/ffta.1996.0170},
       URL = {https://doi.org/10.1006/ffta.1996.0170},
}

@article {BT2018,
    AUTHOR = {Bisson, G. and Tibouchi, M.},
     TITLE = {Constructing permutation rational functions from isogenies},
   JOURNAL = {SIAM J. Discrete Math.},
  FJOURNAL = {SIAM Journal on Discrete Mathematics},
    VOLUME = {32},
      YEAR = {2018},
    NUMBER = {3},
     PAGES = {1741--1749},
      ISSN = {0895-4801,1095-7146},
   MRCLASS = {11T71 (11T06 14K02)},
  MRNUMBER = {3829510},
MRREVIEWER = {Daniel\ Panario},
       DOI = {10.1137/17M1135736},
       URL = {https://doi.org/10.1137/17M1135736},
}
\end{document}